\documentclass[reqno]{amsart}
\usepackage{newcent}       
\usepackage{helvet}         
\usepackage{courier}        
\usepackage{amsfonts}
\usepackage{amsfonts,amssymb,amsmath}
\usepackage[latin1]{inputenc}
\usepackage{color}
\usepackage{graphicx}

\usepackage[active]{srcltx}
\newtheorem{theorem}{Theorem}[section]
\newtheorem{lemma}[theorem]{Lemma}
\newtheorem{proposition}[theorem]{Proposition}

\newtheorem{remark}[theorem]{Remark}
\newtheorem{definition}{Definition}
\newcommand{\mc}[1]{{\mathcal #1}}
\newcommand{\mf}[1]{{\mathfrak #1}}

\newcommand{\bb}[1]{{\mathbb #1}}

\newcommand{\eps}{\varepsilon}

\newcommand{\<}{\langle}
\renewcommand{\>}{\rangle}
\newcommand{\p}{\partial}
\newcommand{\pfrac}[2]{\genfrac{}{}{}{1}{#1}{#2}}

\author{Tertuliano Franco}
\address{IMPA\\
Estrada Dona Castorina, 110\\
Horto, Rio de Janeiro\\
Brasil}
\email{tertu@impa.br}

\author{Patr\'{\i}cia Gon\c{c}alves}
\address{CMAT, Centro de Matem\'atica da Universidade do Minho, Campus de Gualtar, 4710-057 Braga, Portugal}
\email{patg@math.uminho.pt}

\author{Adriana Neumann}
\address{IMPA\\
Estrada Dona Castorina, 110\\
Horto, Rio de Janeiro\\
Brasil}
\email{aneumann@impa.br}

\title[Exclusion process with slow bonds]{Hydrodynamical behavior of symmetric exclusion with slow bonds}
\keywords{Hydrodynamic limit, exclusion process, slow bonds}

\subjclass{60K35,26A24,35K55}

\date{}

\begin{document}

\begin{abstract}
We consider the exclusion process in the one-dimensional discrete torus with $N$ points,
where all the bonds have conductance one, except a finite number of slow bonds, with conductance $N^{-\beta}$, with $\beta\in[0,\infty)$. We prove that the time evolution of the empirical density of particles, in the diffusive scaling,
has a distinct behavior according to the range of the parameter $\beta$. If $\beta\in [0,1)$, the hydrodynamic limit is given by the usual heat equation.
If $\beta=1$, it is given by a parabolic equation involving an operator $\frac{d}{dx}\frac{d}{dW}$, where $W$ is the Lebesgue measure on the torus plus the sum of the Dirac measure supported on each macroscopic point related to the slow bond. If $\beta\in(1,\infty)$, it is given by
 the heat equation with Neumann's boundary conditions,
 meaning no passage through the slow bonds in the continuum.
\end{abstract}
\maketitle
\section{Introduction}

An important subject in statistical physics is the characterization of the hydrodynamical behavior of interacting particle systems in random or inhomogeneous media.
 One relevant and puzzling problem is to consider particle systems with slow bonds and to analyze the macroscopic effect on the hydrodynamic profiles, depending on the {\em{strength}} at these bonds. The problem we address in this paper is the complete characterization of the hydrodynamic limit scenario for the exclusion process with a finite number of slow bonds. Depending on the strength at the slow bonds, one observes a change of behavior that goes from smooth profiles to the development of discontinuities.

 We begin by giving a brief and far from complete review about some results on the subject, all of them related to the exclusion process. In
\cite{f}, by taking suitable random conductances $\{c_k:k\geq{1}\}$, such that $\{c_k^{-1}:k\geq{1}\}$ satisfy a Law of Large Numbers, it was proved that the
randomness of the medium does not survive in the macroscopic time evolution of the density of particles. In \cite{fjl}, the authors consider
conductances driven by an $\alpha$-stable subordinator $W$, and in this case, the randomness survives in the continuum, by replacing in the
hydrodynamical equation the usual Laplacian by a generalized operator $\pfrac{d}{dx}\pfrac{d}{dW}$, which results in the weak heat equation. In
the same line of such quenched result, \cite{fl} shows the analogous behavior, but for a general strictly increasing function $W$. All the
previous works are restricted to the one-dimensional setting, and strongly based on convergence results for diffusions or random walks in
 one-dimensional inhomogeneous media, see \cite{s}. In \cite{v}, there is a generalization of \cite{fl} for a suitable
$d$-dimensional setting, in some sense decomposable into $d$ one-dimensional cases. General sufficient conditions
for the hydrodynamical limit of exclusion process in inhomogeneous medium were established in \cite{j}.
All the above works have in common the association of the exponential clock with the \emph{bonds},
having the Bernoulli product measure as invariant measure, and being close, in some sense,
to the symmetric simple exclusion process.

In \cite{t}, the totally asymmetric simple exclusion process is considered to have a single bond with smaller
clock parameter. Such ``slow bond", not only slows down the passage of particles across it, but it also has a macroscopical impact since it disturbs the hydrodynamic profile.
Somewhat intermediate between the symmetric and asymmetric case, in \cite{b} is considered a single asymmetric bond
in the exclusion process. This unique asymmetric bond gives rise to a flux in the torus and also influences the macroscopic evolution of the density
of particles. In the symmetric case, \cite{fnv} obtained a $d$-dimensional result for a model in which the slow bonds are close
to a smooth surface.

As a consequence of the above results, one can observe the recurrent phenomena about the distinct characteristics of slow bonds in symmetric
and asymmetric settings. In the asymmetric case, e.g. \cite{t} and \cite{b}, the slow bond parameter does not need to be rescaled in order to have a macroscopic influence. Nevertheless, in the symmetric case, from \cite{fjl}, \cite{fl} and \cite{fnv}
we see that the slow bond must have parameter of order $N^{-1}$ in order to have macroscopical impact.

In this paper, we make precise this last statement for the following model. Consider the state space
of configurations with at most one particle per site in the discrete torus.
To each bond is associated an exponential clock. When this clock rings, the
occupancies of the sites connected by the bond are exchanged. All the bonds have clock parameter equal
to $1$, except $k$ finite bonds, chosen in such a way that these bonds correspond to $k$ fixed macroscopic points $b_1,\ldots,b_k$.
The conductances in these slow bonds are given by $N^{-\beta}$, with $\beta\in[0,+\infty)$ and the scale here is diffusive in all bonds.

If $\beta=1$, the time evolution of the density of particles $\rho(t,\cdot)$ is described by the partial differential equation
\begin{equation*}
\left\{
\begin{array}{l}
{\displaystyle \partial_t \rho \; =\; \pfrac{d}{dx}\pfrac{d}{dW} \rho } \\
{\displaystyle \rho(0,\cdot) \;=\; \gamma(\cdot)}
\end{array},
\right.
\end{equation*}
where the operator $\pfrac{d}{dx}\pfrac{d}{dW}$ is defined in Subsection \ref{weak operator} and $W$ is the Lebesgue measure on the torus plus the sum of the Dirac measure in each of the $\{b_i:i=1,...,k\}$.
This result is a particular case of both the results in \cite{fl} and \cite{fnv}. For the sake of completeness, we present here a simpler proof
of it. It is relevant to mention the interpretation of such partial differential equation as a weak version of
\begin{equation*}
\left\{
\begin{array}{l}
{\displaystyle \partial_t \rho \; =\; \partial_u^2 \rho } \\
 \partial_u\rho_t(1)\;=\;\partial_u\rho_t(0)\;=\;\rho_t(1)-\rho_t(0)\\
{\displaystyle \rho(0,\cdot) \;=\; \gamma(\cdot)}
\end{array}
\right.
\end{equation*}
where $0$ and $1$ mean the left and right side of a macroscopic point $b_i$ related to a slow bond.
This equation says that $\rho$ is discontinuous at each macroscopic point $\{b_i:i=1,...,k\}$ with passage of mass at such point
 and is governed by the {\em{Fick's Law: the rate of passage of mass is proportional to the gradient concentration}}.
Such interpretation comes from the natural domain $C_W$ of the operator $\pfrac{d}{dx}\pfrac{d}{dW}$,
defined in Subsection \ref{limit points critical beta}. It is easy to verify that all the functions in the domain
$C_W$ satisfy the above boundary condition, for more details see \cite{fl} and \cite{fnl}.

If $\beta\in[0,1)$, the conductances in these slow bonds do not converge to zero sufficiently fast in order to appear in the hydrodynamical limit. As a consequence, there is no macroscopical influence of the slow bonds in the continuum and we obtain
the hydrodynamical equation as the usual heat equation. The proof of last result is based on the {\em{Replacement Lemma}},
and the range parameter of $\beta$ is sharp in the sense that, it only works for $\beta\in[0,1)$.

As $\beta$ increases, the conductance at the slow bonds decreases and the passage of particles through these bonds becomes more difficult. In fact, for $\beta\in (1,+\infty)$, the clock parameters go to zero faster than at the critical value $\beta=1$
 and each slow bond gives rise to a barrier in the continuum. Macroscopically this phenomena gives rise to the usual heat equation with Neumann's boundary conditions at each macroscopic point $\{b_i:i=1,...,k\}$, which means here that the spatial derivative of $\rho$ at each $\{b_i:i=1,...,k\}$ equals to zero and, physically, this represents an {\em{isolated boundary}}.
Moreover, the uniqueness of weak solutions of such equation
says explicitly that the macroscopic evolution of the density of particles is independent for each
interval $[b_i,b_{i+1}]$, however the passage of particles in the discrete torus through the slow bonds is still possible.
The proof of this result is also based on the {\em{Replacement Lemma}} and requires sharp energy estimates.

Since the regime $\beta=1$ was already known from previous works,
the main contribution of this article is the complete characterization of the
three distinct behaviors for the time evolution of the empirical density of particles,
exhibiting a behavior change depending on the parameter of the conductance at the slow bonds.
From our knowledge, no
similar phenomena were exploited for the hydrodynamic limit of interacting particle systems.
Moreover, for the regime $\beta\in(1,\infty)$ the density evolves according to the heat equation with
Neumann's boundary conditions, which has a meaningful physical interpretation. This is the other great novelty developed in this paper. So far, partial differential equations with Dirichlet's boundary conditions could be approached by e.g. studying interacting particle systems in contact with reservoirs. Here, by considering partial differential equations with Neumann's boundary conditions, we give a step towards extending the set of treatable partial differential equations by the hydrodynamic limit theory. Besides all the mentioned achievements, we also prove that the regime
$\beta=1$ is critical, since the other two regimes have positive Lebesgue measure on the line.

In order to achieve our goal, the main difficulties appear in the characterization of limit points for each regime of $\beta$.
We overcome this difficulty by developing a suitable \emph{Replacement Lemma}, which allow us to replace product of site occupancies by functions of the empirical measure in the continuum limit.
Furthermore, that lemma is also crucial for characterizing the behavior near
the slow bonds.

Our result can also be extended to non-degenerate exclusion type models as introduced in \cite{glt}. In such models, particles interact with hard core exclusion and the rate of
exchange between two consecutive sites is influenced by the number of particles in the vicinity of the exchanging sites. The jump rate is strictly positive, so that all the configurations are erdogic, in the sense that a move to an unoccupied site can always occur. It was shown in \cite{glt} that the hydrodynamical equation for such models is given by a non-linear partial equation.
Having established the \emph{Replacement Lemma}, the extension of our results to these models is almost standard \cite{fl}.
We also believe that our method is robust enough fitting other models such as
independent random walks, the zero-range process, the generalized exclusion process, when a finite number of slow bonds is present.

The present work is divided as follows.
In Section \ref{s2}, we introduce notation and state the main result, namely Theorem \ref{t1}. In
Section \ref{s3} we make precise the scaling limit and sketch the proof of Theorem \ref{t1}. In
Section \ref{s4}, we prove tightness for any range of the parameter $\beta$.
In Section \ref{s5}, we prove the {\em{Replacement Lemma}} and we establish the energy estimates, which are fundamental for
characterizing the limit points and the uniqueness of weak solutions of the partial differential equations considered here. In Section \ref{s6} we characterize the limit points as weak solutions of the corresponding partial differential
equations. Finally, uniqueness of weak solutions is refereed to Section \ref{s7}.

\section{Notation and Results}\label{s2}
Let $\bb T_N=\{1,\ldots,N\}$ be the one-dimensional discrete torus with $N$ points. At each site, we allow at most one particle.
Therefore, we will be concerned about the state space $\{0,1\}^{\bb T_N}$. Configurations will be denoted by the Greek letter $\eta$,
so that $\eta(x)=1$, if the site $x$ is occupied, otherwise $\eta(x)=0$.\\

We define now the exclusion process with state space $\{0,1\}^{\bb T_N}$ and with conductance $\{\xi^{N}_{x,x+1}\}_x$ at the bond of vertices $x,x+1$.
The dynamics of this Markov process can be described as follows. To each bond of vertices $x,x+1$, we associate
an exponential clock
of parameter $\xi^{N}_{x,x+1}$. When this clock rings, the value of $\eta$ at the vertices of this bond are exchanged.
This process can also be characterized in terms of its infinitesimal generator $\mathcal{L}_{N}$, which
acts on local functions $f:\{0,1\}^{\bb T_N}\rightarrow \bb{R}$ as

\begin{equation*}
\mathcal{L}_{N}f(\eta)=\sum_{x\in \bb T_N}\,\xi^{N}_{x,x+1}\,\Big[f(\eta^{x,x+1})-f(\eta)\Big]\,,
\end{equation*}
where $\eta^{x,x+1}$ is the configuration obtained from $\eta$ by exchanging the variables $\eta(x)$ and $\eta(x+1)$:
\begin{equation*}
(\eta^{x,x+1})(y)=\left\{\begin{array}{cl}
\eta(x+1),& \mbox{if}\,\,\, y=x\,,\\
\eta(x),& \mbox{if} \,\,\,y=x+1\,,\\
\eta(y),& \mbox{otherwise.}
\end{array}
\right.
\end{equation*}\\

The Bernoulli product measures $\{\nu^N_\alpha : 0\le \alpha \le 1\}$ are invariant and in fact,
reversible, for the dynamics introduced above. Namely, $\nu^N_\alpha$ is a product
measure on $\{0,1\}^{\bb T_N}$ with marginal at site $x$ in $\bb T_N$ given by
\begin{equation*}
\nu^N_\alpha \{\eta : \eta(x) =1\} \;=\; \alpha.
\end{equation*}

Denote by $\bb T$ the one-dimensional continuous torus $[0,1)$.
The exclusion process with a slow bond at each point $b_1\ldots,b_k\in\bb T$ is defined with the following conductances:

\begin{equation*}
\xi^{N}_{x,x+1}\;=\;\left\{\begin{array}{cl}
N^{-\beta}, &  \mbox{if}\,\,\,\,\{b_1,\ldots,b_k\}\cap (\frac{x}{N},\frac{x+1}{N}]\neq \varnothing\,,\\
\\
1, &\mbox{otherwise\,.}
\end{array}
\right.
\end{equation*}

The conductances are chosen in such a way that particles cross bonds at rate one,
except $k$ particular bonds in which the dynamics is slowed down by a factor $N^{-\beta}$, with $\beta\in{[0,\infty)}$.
Each one of these particular bonds contains the macroscopic point $b_i\in\bb T$; or $b_i$ coincides with
some vertex $\pfrac{x}{N}$ and the slow bond is chosen as the bond to the left of $\pfrac{x}{N}$. To simplify notation,
we denote by $Nb_i$ the left vertex of the slow bond containing $b_i$. \\

Denote by $\{\eta_t:=\eta_{tN^2}: t\ge 0\}$ the Markov process on $\{0,1\}^{\bb T_N}$
associated to the generator $\mathcal{L}_N$ \emph{speeded up} by
$N^2$. Although $\eta_t$ depends on $N$ and $\beta$, we are not indexing it on that in order not to overload notation.
Let $D(\bb R_+, \{0,1\}^{\bb T_N})$ be the path space of
c\`adl\`ag trajectories with values in $\{0,1\}^{\bb T_N}$. For a
measure $\mu_N$ on $\{0,1\}^{\bb T_N}$, denote by $\bb P_{\mu_N}^\beta$ the
probability measure on $D(\bb R_+, \{0,1\}^{\bb T_N})$ induced by the
initial state $\mu_N$ and the Markov process $\{\eta_t : t\ge 0\}$ and denote by  $\bb E_{\mu_N}^\beta$
 the expectation with respect to $\bb P_{\mu_N}^\beta$.\\

\begin{definition} \label{def associated measures}
A sequence of probability measures $\{\mu_N : N\geq 1 \}$ on $\{0,1\}^{\bb T_N}$ is
said to be associated to a profile $\rho_0 :\bb T \to [0,1]$ if for every $\delta>0$ and every continuous functions $H: \bb T \to \bb R$
\begin{equation}\label{associated}
\lim_{N\to\infty}
\mu_N \Big\{ \eta:\, \Big\vert \pfrac 1N \sum_{x\in\bb T_N} H(\pfrac{x}{N})\, \eta(x)
- \int_{\bb{T}} H(u)\, \rho_0(u) du \Big\vert > \delta \Big\} \;=\; 0.
\end{equation}
\end{definition}
Now we introduce an operator which
corresponds to the generator of the random walk in $\bb T_N$ with conductance $\xi^N_{x,x+1}$ at the bond of vertices $x,x+1$. This operator acts on $H:\mathbb{T} \rightarrow \mathbb{R}$ as
\begin{equation}\label{generator random walk}
\mathbb{L}_N H(\pfrac{x}{N}) =  \xi^N_{x,x+1} \, \Big[ H\Big(\pfrac{x+1}{N}\Big)
- H\Big(\pfrac{x}{N}\Big) \Big] + \xi^N_{x-1,x} \, \Big[H\Big(\pfrac{x-1}{N}\Big) - H\Big(\pfrac{x}{N}\Big) \Big] \,.
\end{equation}

 We will not
differentiate the notation for functions $H$ defined on $\bb{T}$ and on $ \bb T_N$.
The indicator function of a set $A$ will be written by $\textbf 1_{A}(u)$, which is
one when $u\in A$ and zero otherwise.


\subsection{The Operator $\frac{d}{dx}\frac{d}{dW}$}

\label{weak operator}

\quad
\vspace{0.2cm}

Given the points $b_1,\ldots,b_k\in \bb T$, define the measure $W(du)$ in the torus $\bb T$ by
\begin{equation*}
 W(du)\;=\; du+ \delta_{b_1}(du)+\cdots+\delta_{b_k}(du)\,,
\end{equation*}
so that $W$ is the Lebesgue measure on the torus $\bb{T}$ plus the sum of the Dirac measure in each of the $\{b_i:i=1,...,k\}$.

Let $\mc H^1_W$ be the set of functions $F$ in $L^2(\bb T)$ such that for $x\in{\bb T}$
\begin{equation*}
F(x) \;=\; a \;+\; \int_{(0,x]}\Big(b+\int_0^y f(z) \, dz\Big) W(dy) ,
\end{equation*}
for some function $ f$ in $L^2(\bb T)$ and $a,b\in{\mathbb{R}}$ such that
\begin{equation}\label{domain}
\int_0^1  f(x) \, dx \;=\; 0\;, \quad
\int_{(0,1]}  \Big( b + \int_0^y f(z) \, dz \Big) W(dy)\;=\;0\; .
\end{equation}
Define the operator
\begin{equation*}
\begin{split}
&\frac{d}{dx}\frac{d}{dW} : \mc H^1_W \to L^2(\bb T)\\
&\frac{d}{dx}\frac{d}{dW} F =  f.
\end{split}
\end{equation*}
For more details we refer the reader to \cite{fl}.
\subsection{The hydrodynamical equations}

\quad
\vspace{0.2cm}

Consider a continuous density profile $\gamma : \bb T \to  [0,1]$.
Denote by $\<\cdot,\cdot\>$ the inner product in $L^2(\mathbb{T})$, by $ \rho_t$  a function $\rho(t,\cdot)$ and for an integer $n$ denote
by $C^n(\mathbb{T})$ the set of continuous functions from $\mathbb{T}$ to $\mathbb{R}$ and with continuous derivatives
of order up to $n$.
For $\mathcal I$ an interval of $\bb{T}$, here and in the sequel, for $n$ and $m$ integers, we use the notation $C^{n,m}([0,T]\times \mathcal I)$ to denote the set of functions defined on the domain $[0,T]\times \mathcal I$, that are of class $C^n$ in time and
$C^m$ in space.\\

\begin{definition}
A bounded function $\rho : [0,T] \times \bb T \to \bb R$
 is said to be a weak solution of the parabolic differential equation with initial condition $\gamma(\cdot)$:
\begin{equation}\label{edp1}
\left\{
\begin{array}{l}
{\displaystyle \partial_t \rho \; =\; \partial_u^2 \rho } \\
{\displaystyle \rho(0,\cdot) \;=\; \gamma(\cdot)}
\end{array}
\right.
\end{equation}
if, for $t\in{[0,T]}$ and $H\in C^2(\bb T) $, $\rho(t,\cdot)$ satisfies the integral equation
\begin{equation*}
\< \rho_t, H\> \;-\; \< \gamma , H\>
\;-\; \int_0^t \< \rho_s , \partial_u^2  H \>\, ds\;=0.
\end{equation*}
\end{definition}

\begin{definition}\label{def weak solution edp2}
A bounded function $\rho : [0,T] \times \bb T \to \bb R$
 is said to be a weak solution of the parabolic differential equation with initial condition $\gamma(\cdot)$:
\begin{equation}\label{edp2}
\left\{
\begin{array}{l}
{\displaystyle \partial_t \rho \; =\; \frac{d}{dx}\frac{d}{dW} \rho } \\
{\displaystyle \rho(0,\cdot) \;=\; \gamma(\cdot)}
\end{array}
\right.
\end{equation}
if, for $t\in{[0,T]}$ and $H\in\mc H^1_W$, $\rho(t,\cdot)$ satisfies the integral equation
\begin{equation*}
\< \rho_t, H\> \;-\; \< \gamma , H\>
 -\int_0^t \Big\< \rho_s , \frac{d}{dx}\frac{d}{dW}  H \Big\>\, ds\;=\;0\;.
\end{equation*}
\end{definition}

Following the notation of \cite{e}, denote by $L^2(0,T;\mc H^1(a,b))$ the space of functions
$\varrho\in L^2([0,T]\times [a,b])$ for which there exists a function in $L^2([0,T]\times [a,b])$,
denoted by $\p_u\varrho $, satisfying
\begin{equation*}
\int_0^T \int_a^b  \, (\partial_u H) (s, u)\, \varrho(s,u)\hspace{0.1cm}du\hspace{0.1cm}ds
\;=\; -\; \int_0^T \int_a^b \,   H (s, u)\, (\partial_u \varrho) (s, u)\hspace{0.1cm}du\hspace{0.1cm}ds\,,
\end{equation*}
for any $H\in C^{0,1}([0,T]\times [a,b])$ with compact support in
$[0,T]\times (a,b)$.

\begin{definition}\label{def eq neumman cond}
Let $[b_i,b_{i+1}]\subset\bb T$.
A bounded function $\rho : [0,T] \times [b_i,b_{i+1}] \to \bb R$
 is said to be a weak solution of the parabolic differential equation
with Neumann's boundary conditions in the cylinder $[0,T]\times [b_i,b_{i+1}]$ and with initial condition $\gamma(\cdot)$:
\begin{equation}\label{edp3}
\left\{
\begin{array}{l}
{\displaystyle \partial_t \rho \; =\; \partial_u^2 \rho }\\
{\displaystyle \rho(0,\cdot) \;=\; \gamma(\cdot)}\\
{\displaystyle\partial_u\rho(t,b_i)=\displaystyle\partial_u\rho(t,b_{i+1})}=0,\;\forall t\in[0,T]
\end{array}
\right.
\end{equation}
 if, for $t\in{[0,T]}$ and $H\in C^{1,2}([0,T]\times[b_i,b_{i+1}])$, $\rho(t,\cdot)$ satisfies the integral equation
\begin{equation}\label{int}
\begin{split}
&\int_{b_i}^{b_{i+1}} \rho(t,u)\,H(t,u)\,du -\int_{b_i}^{b_{i+1}}\gamma(u)\,H(0,u)\,du\\
-& \int_0^t\int_{b_i}^{b_{i+1}} \rho(s,u) \, \{\partial_u^2  H(s,u)+\partial_s H(s,u)\}\,du\, ds\\
+&\;\int_0^t\partial_u H (s,b_{i+1})\,\rho(s,b_{i+i}^-)\,ds-\int_0^t\partial_uH(s,b_i)\,\rho(s,b_i^+)\,ds\;=\;0\;
\end{split}
\end{equation}
and $\rho(t,\cdot)$ belongs to $L^2(0,T;\mc H^1(b_i,b_{i+1}))$.
\end{definition}

Since in Definition \ref{def eq neumman cond} we impose $\rho\in L^2(0,T;\mc H^1(b_i,b_{i+1}))$,
the integrals are well-defined at the boundary. This is a consequence of the following two facts. On one hand, it follows from the assumption that $\rho(t,\cdot)\in \mathcal{H}^1(b_i,b_{i+1})$, almost surely in $t\in [0,T]$.
On the other hand, it is well-known that functions belonging to $\mathcal{H}^1(b_i,b_{i+1})$ and with sided limits at $b_i$ and
$b_{i+1}$ are absolutely continuous with respect to the Lebesgue measure, see \cite{l} for instance. We refer the reader to
\cite{e} for classical results about Sobolev spaces.

Heuristically, in order to establish an integral equation for the
weak solution of the heat equation with Neumann's boundary conditions as above, one should multiply
 \eqref{edp3} by a test function $H$ and perform twice a formal integration by parts to arrive
at \eqref{int}. \\

 We are now in position to state the main result of this paper:
 \begin{theorem}\label{t1}
Fix $\beta\in [0,\infty)$. Consider the exclusion process with $k$ slow bonds corresponding to macroscopic points
 $b_1,\ldots,b_k\in{\bb{T}}$ and with conductance $N^{-\beta}$ at each one of these slow bonds.

Fix a continuous initial profile $\gamma : \bb T \to [0,1]$.  Let $\{\mu_N: N\geq{1}\}$ be
a sequence of probability measures  on $\{0,1\}^{\bb T_N}$ associated to $\gamma$.
 Then, for any $t\in [0,T]$, for every $\delta>0$ and every $H\in C(\bb{T})$, it holds that
\begin{equation*}
\lim_{N\to\infty}
\bb P_{\mu_N}^\beta \Big\{\eta_. : \, \Big\vert \pfrac{1}{N} \sum_{x\in\bb{T}_N}
H(\pfrac{x}{N})\, \eta_t(x) - \int_{\bb T} H(u)\, \rho(t,u) du \Big\vert
> \delta \Big\} \;=\; 0\,,
\end{equation*}
 where :
\begin{itemize}
\item
if $\beta\in[0,1)$, $\rho(t,\cdot)$ is the unique weak solution of \eqref{edp1};
\item
if $\beta=1$, $\rho(t,\cdot)$ is the unique weak solution of \eqref{edp2};
\item
 if $\beta\in(1,\infty)$, in each cylinder $[0,T]\times[b_i,b_{i+1}]$, $\rho(t,\cdot)$ is the unique weak solution of \eqref{edp3}.
\end{itemize}
\end{theorem}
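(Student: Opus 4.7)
The plan is to follow the standard martingale method for hydrodynamic limits. I view the empirical measure $\pi^N_t(du) = \frac{1}{N}\sum_{x\in\bb T_N}\eta_t(x)\,\delta_{x/N}(du)$ as a random element of the Skorokhod space $D([0,T],\mc M(\bb T))$ and prove three ingredients: (i) tightness of its law under $\bb P^\beta_{\mu_N}$, carried out in Section \ref{s4}; (ii) that every limit point is concentrated on trajectories of absolutely continuous measures $\rho(t,u)\,du$ whose density is a weak solution of the appropriate equation among \eqref{edp1}, \eqref{edp2}, \eqref{edp3}, done in Section \ref{s6}; and (iii) uniqueness of these weak solutions, deferred to Section \ref{s7}. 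These ingredients together yield convergence in probability of $\<\pi^N_t,H\>$ to $\int_{\bb T} H(u)\,\rho(t,u)\,du$, which is the content of Theorem \ref{t1}.

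The common computational backbone is Dynkin's formula: for a smooth enough test function $H$,
\begin{equation*}
M^{H,N}_t \;=\; \<\pi^N_t, H\> \;-\; \<\pi^N_0, H\> \;-\; \int_0^t \frac{1}{N}\sum_{x\in\bb T_N} N^2\,\bb L_N H(x/N)\,\eta_s(x)\,ds
\end{equation*}
is a martingale whose quadratic variation is $O(1/N)$ by a standard $L^2$-estimate exploiting the Bernoulli invariant measures $\nu^N_\alpha$. Identifying limit points thus reduces to showing that, for a class of $H$ tailored to $\beta$, the integral term converges to the quantity appearing in the corresponding weak formulation, once $N^2\bb L_N$ is analyzed separately at regular and slow bonds. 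For $\beta\in[0,1)$ I take $H\in C^2(\bb T)$: regular bonds reproduce the discrete Laplacian converging to $\partial_u^2 H$, while each slow bond carries a factor $N^{1-\beta}$ that would diverge were it not multiplied by a local gradient $\eta_s(y)-\eta_s(y+1)$, which the Replacement Lemma of Section \ref{s5} shows is asymptotically negligible in the time-averaged sense, recovering \eqref{edp1}. For $\beta=1$ I take $H\in\mc H^1_W$; then $N^2\bb L_N H$ is the natural discretization of $\frac{d}{dx}\frac{d}{dW} H$, so Definition \ref{def weak solution edp2} follows directly once this class of test functions has been chosen, as in \cite{fl}. For $\beta\in(1,\infty)$ I take $H\in C^{1,2}([0,T]\times [b_i,b_{i+1}])$ supported macroscopically in a single interval; two summations by parts transfer derivatives onto $H$, the slow bond now contributes only $O(N^{1-\beta})\to 0$ to the bulk, and the terms left over at the endpoints of $[b_i,b_{i+1}]$ produce the boundary integrals in \eqref{int}.

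The main obstacle is the regime $\beta>1$. One must simultaneously show that the slow bond asymptotically decouples the two sides and that the limiting density has enough regularity on each interval $[b_i,b_{i+1}]$ for the one-sided traces $\rho(s,b_i^+)$ and $\rho(s,b_{i+1}^-)$ in \eqref{int} to be well-defined; without the latter, the boundary integrals cannot be passed to the limit. Both properties are delivered by the sharp energy estimates of Section \ref{s5}, which supply the a priori bound $\rho(t,\cdot)\in\mc H^1(b_i,b_{i+1})$, give existence of one-sided limits by absolute continuity, and allow the integral identity to be closed at the endpoints. Once this is in place, the uniqueness results of Section \ref{s7} upgrade subsequential convergence to genuine convergence, completing the proof.
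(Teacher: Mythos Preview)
Your proposal is correct and follows the paper's own approach almost verbatim: tightness (Section~\ref{s4}), characterization of limit points via the Dynkin martingale and the Replacement Lemma / energy estimates (Sections~\ref{s5}--\ref{s6}), and uniqueness (Section~\ref{s7}), combined into Proposition~\ref{s15}. One small inaccuracy: in the regime $\beta\in[0,1)$ the slow-bond contribution $N^{1-\beta}\big[H(\tfrac{x+1}{N})-H(\tfrac{x}{N})\big]$ is actually $O(N^{-\beta})$ by smoothness of $H$ and vanishes on its own; the Replacement Lemma is instead needed for the \emph{order-one} residual $\partial_u H(b_i)\,\big(\eta_s(Nb_i)-\eta_s(Nb_i+1)\big)$ produced by the two regular bonds adjacent to the slow one, exactly as in \eqref{expression2}.
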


\begin{remark}
 The assumption that all slow bonds have exactly the same conductance is not necessary at all. In fact, last result is true when considering each slow bond containing the macroscopic point $b_i$ with conductance $N^{-\beta_i}$. In that case, we would obtain a parabolic differential equation with the behavior at each $[b_i,b_{i+1}]$ given by the regime of the corresponding $\beta_i$ as above. Another straightforward generalization
is to consider conductances not exactly equal to $N^{-\beta}$, but of order $N^{-\beta}$, in the sense that the quotient with
$N^{-\beta}$ converges to one. For sake of clarity, we present the proof under the conditions of Theorem \ref{t1}.
\end{remark}

\section{Scaling Limit}\label{s3}

Let $\mc M$ be the space of positive measures on $\bb T$ with total
mass bounded by one, endowed with the weak topology. Let
$\pi^{N}_{t} \in \mc M$ be the empirical measure at time $t$ associated to $\eta_t$, namely,
it is the measure on $\bb T$ obtained by rescaling space by $N$ and
by assigning mass $N^{-1}$ to each particle:
\begin{equation}\label{f01}
\pi^{N}_{t} \;=\; \pfrac{1}{N} \sum _{x\in \bb T_N} \eta_t (x)\,
\delta_{x/N}\,,
\end{equation}
where $\delta_u$ is the Dirac measure concentrated on $u$.
For an integrable function
$H:\bb T \to \bb R$, $\<\pi^N_t, H\>$ stands for
the integral of $H$ with respect to $\pi^N_t$:
\begin{equation*}
\<\pi^N_t, H\> \;=\; \pfrac 1N \sum_{x\in\bb T_N}
H (\pfrac{x}{N})\, \eta_t(x)\,.
\end{equation*}
This notation is not to be mistaken with the inner product in $L^2(\bb R)$. Also, when $\pi_t$ has a density
$\rho$, namely when $\pi(t,du) = \rho(t,u) du$, we sometimes write $\<\rho_t, H\>$
for $\<\pi_t, H\>$.\\

Fix $T>0$. Let $D([0,T], \mc M)$ be the space of $\mc M$-valued
c\`adl\`ag trajectories $\pi:[0,T]\to\mc M$ endowed with the
\emph{Skorohod} topology.  For each probability measure $\mu_N$ on
$\{0,1\}^{\bb T_N}$, denote by $\bb Q_{\mu_N}^{\beta,N}$ the measure on
the path space $D([0,T], \mc M)$ induced by the measure $\mu_N$ and
the empirical process $\pi^N_t$ introduced in \eqref{f01}.

Fix a continuous profile $\gamma : \bb T \to [0,1]$ and consider a
sequence $\{\mu_N : N\ge 1\}$ of measures on $\{0,1\}^{\bb T_N}$
associated to $\gamma$. Let $\bb Q^{\beta}$ be
the probability measure on $D([0,T], \mc M)$ concentrated on the
deterministic path $\pi(t,du) = \rho (t,u)du$, where:

\begin{itemize}
\item
if $\beta\in[0,1)$, $\rho(t,\cdot)$ is the unique weak solution of \eqref{edp1};
\item
if $\beta=1$, $\rho(t,\cdot)$ is the unique weak solution of \eqref{edp2};
\item
 if $\beta\in(1,\infty)$, in each cylinder $[0,T]\times[b_i,b_{i+1}]$, $\rho(t,\cdot)$ is the unique weak solution of \eqref{edp3}.
\end{itemize}
\begin{proposition}
\label{s15}
As $N\uparrow\infty$, the sequence of probability measures $\{\bb
Q_{\mu_N}^{\beta,N}:N\geq{1}\}$ converges weakly to $\bb Q^{\beta}$.
\end{proposition}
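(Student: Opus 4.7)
The plan is to follow the classical three-step hydrodynamic program: tightness, characterization of limit points as concentrated on weak solutions of the appropriate PDE, and uniqueness of those weak solutions. Since Section \ref{s4} establishes tightness of $\{\bb Q^{\beta,N}_{\mu_N}\}_{N\geq 1}$ for every $\beta\in[0,\infty)$, and Section \ref{s7} supplies uniqueness for each of \eqref{edp1}, \eqref{edp2} and \eqref{edp3}, the whole task reduces to showing that any weak limit point $\bb Q^*$ of the sequence is concentrated on trajectories of the form $\pi(t,du)=\rho(t,u)\,du$ whose density $\rho$ satisfies the integral identity corresponding to the regime of $\beta$. I would extract a subsequence converging to $\bb Q^*$ and use the exclusion bound $\eta(x)\leq 1$ to conclude, in a standard way, that $\bb Q^*$-a.s. the limiting trajectory is absolutely continuous with respect to Lebesgue with a density $0\leq \rho\leq 1$.

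The core of the argument is the use of Dynkin's formula: for a suitable test function $H$, the process
\begin{equation*}
M^{N,H}_t \;=\; \<\pi^N_t, H\> \,-\, \<\pi^N_0, H\> \,-\, \int_0^t N^2 \mc L_N \<\pi^N_s, H\>\, ds
\end{equation*}
is a martingale. A direct calculation gives $N^2\mc L_N\<\pi^N_s,H\>=\frac{1}{N}\sum_{x\in\bb T_N}(\mathbb L_N H)(\pfrac{x}{N})\,\eta_s(x)$, where $\mathbb L_N$ is the operator \eqref{generator random walk}, and a quadratic variation estimate shows $M^{N,H}_t\to 0$ in $L^2(\bb P^\beta_{\mu_N})$. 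Passing to the limit in this identity identifies $\rho$ as a weak solution, provided the right test class is used in each regime.

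For $\beta\in[0,1)$ I would take $H\in C^2(\bb T)$; the operator $\mathbb L_N H$ coincides with the usual discrete Laplacian except at the finitely many sites adjacent to a slow bond, where it is multiplied by $N^{-\beta}$. The contribution of these sites to $\<\pi^N_s,\mathbb L_N H\>$ is $O(N^{-\beta})\to 0$ after replacing $\eta_s(Nb_i)$ by a macroscopic average via the Replacement Lemma of Section \ref{s5}, so the limit identity \eqref{edp1} follows from $\mathbb L_N H\to \partial_u^2 H$ uniformly on $\bb T$. For $\beta=1$ I would use the richer test class $\mc H^1_W$ from Subsection \ref{weak operator}; the convergence $\mathbb L_N H(\pfrac{x}{N})\to \frac{d}{dx}\frac{d}{dW}H$, already established in \cite{fl}, together with the Replacement Lemma, yields the integral identity of Definition \ref{def weak solution edp2}. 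For $\beta\in(1,\infty)$ I would localize on each cylinder $[0,T]\times[b_i,b_{i+1}]$ and test against $H\in C^{1,2}([0,T]\times[b_i,b_{i+1}])$; a discrete integration by parts produces an interior term converging to $\int_0^t\int_{b_i}^{b_{i+1}}\rho(s,u)\{\partial_u^2 H+\partial_s H\}\,du\,ds$, boundary terms at the slow bonds of size $N^{-\beta}\cdot N\cdot O(1)\to 0$, and surviving macroscopic boundary terms $\partial_u H(s,b_{i+1})\rho(s,b_{i+1}^-)-\partial_u H(s,b_i)\rho(s,b_i^+)$, exactly matching \eqref{int}.

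The principal difficulty will be the supercritical regime $\beta\in(1,\infty)$, where one must make sense of the boundary traces $\rho(s,b_i^\pm)$ in \eqref{int}. To this end I would invoke the sharp energy estimates of Section \ref{s5}, which show that any limit density lies in $L^2(0,T;\mc H^1(b_i,b_{i+1}))$ on each interval, so that sided limits exist almost surely in $t$ by absolute continuity; combined with the Replacement Lemma used near the slow bonds to identify $\eta_s(Nb_i\pm 1)$ with a small spatial average of the density, this yields the required trace identification. Once the integral equation holds for every $H$ in the appropriate class, uniqueness from Section \ref{s7} forces $\bb Q^*=\bb Q^\beta$, and since every convergent subsequence has the same limit, the whole sequence converges, completing the proof of Proposition \ref{s15}.
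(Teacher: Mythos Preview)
Your proposal is correct and follows essentially the same three-step program as the paper: tightness (Section~\ref{s4}), characterization of limit points via the Dynkin martingale together with the Replacement Lemma and energy estimates (Sections~\ref{s5}--\ref{s6}), and uniqueness of weak solutions (Section~\ref{s7}). Two cosmetic slips to fix: the identity should read $N^2\mc L_N\<\pi^N_s,H\>=\<\pi^N_s,N^2\,\bb L_N H\>$ (you dropped the $N^2$ on the right), and in the regime $\beta\in[0,1)$ the slow-bond contribution is not $O(N^{-\beta})$ but rather splits into an $O(N^{-\beta})$ piece that vanishes automatically and an $O(1)$ piece $\partial_u H(b_i)\{\eta_s(Nb_i)-\eta_s(Nb_i+1)\}$ whose cancellation is precisely what the Replacement Lemma delivers.
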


The proof of this result is divided into three parts. In the next section,
we show that the sequence $\{\bb Q_{\mu_N}^{\beta,N} : N\ge
1\}$ is tight, for any $\beta\in [0,\infty)$. In Section \ref{s6} we characterize the limit
points of this sequence for each regime of the parameter $\beta$. Uniqueness of weak solutions is presented in Section
\ref{s7} and this implies the uniqueness of limit points of the sequence $\{\bb Q_{\mu_N}^{\beta,N} : N\ge
1\}$. In the fifth section, we prove a
suitable {\em{Replacement Lemma}} for each regime of $\beta$, which is crucial in the task of characterizing limit points and uniqueness.

\section{Tightness} \label{s4}
\begin{proposition}\label{s06}
For any fixed $\beta\in[0,\infty)$, the sequence of measures $\{\bb Q_{\mu_N}^{\beta,N} : N\ge 1\}$ is tight in
the Skorohod topology of $D([0,T],\mc M)$.
\end{proposition}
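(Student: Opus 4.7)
The plan is to combine the compactness of $\mc M$ with the usual reduction to real-valued processes, and then apply Aldous' criterion via Dynkin's martingale decomposition. Since $\bb T$ is compact and every $\pi^N_t$ has total mass at most one, $\mc M$ itself is compact in the weak topology, so tightness of $\{\bb Q^{\beta,N}_{\mu_N}\}$ in $D([0,T],\mc M)$ reduces to tightness in $D([0,T],\bb R)$ of the real-valued processes $\<\pi^N_\cdot,H\>$ for every $H$ in a dense subfamily of $C(\bb T)$. I take $H\in C^2(\bb T)$ throughout.

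For such an $H$, Dynkin's formula produces the mean-zero martingale
\begin{equation*}
M^{N,H}_t \;:=\; \<\pi^N_t,H\>\;-\;\<\pi^N_0,H\>\;-\;\int_0^t N^2\mc L_N\<\pi^N_s,H\>\,ds,
\end{equation*}
and a summation by parts rewrites the integrand as $\pfrac{1}{N}\sum_{x\in\bb T_N}\eta_s(x)\,N^2\bb L_N H(\pfrac{x}{N})$ with $\bb L_N$ as in \eqref{generator random walk}. The key analytic step is to show that this quantity is bounded uniformly in $s$, $\eta$ and $N$ by some constant $C(H,k)$. At every site $x$ whose two adjacent bonds are both regular, $N^2\bb L_N H(\pfrac{x}{N})$ is just the discrete Laplacian of $H$, hence uniformly bounded by $\|H''\|_\infty+O(1/N)$. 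At each of the at most $2k$ sites adjacent to a slow bond, $|N^2\bb L_N H(\pfrac{x}{N})|$ is of order $N$, but the prefactor $\pfrac{1}{N}$ in the empirical sum, together with the finite count of such sites, absorbs this apparent divergence. This uniform bound yields Aldous' condition on the integral term deterministically, since $|\int_\tau^{\tau+\theta} N^2\mc L_N\<\pi^N_s,H\>\,ds|\le C(H,k)\,\theta$ for every stopping time $\tau\le T$ and every $\theta\in[0,\delta]$.

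For the martingale term, the standard exclusion-type computation gives
\begin{equation*}
\<M^{N,H}\>_t \;=\; \int_0^t\sum_{x\in\bb T_N}\xi^N_{x,x+1}\bigl[H(\pfrac{x+1}{N})-H(\pfrac{x}{N})\bigr]^2\bigl[\eta_s(x+1)-\eta_s(x)\bigr]^2\,ds.
\end{equation*}
Regular bonds contribute at most $\|H'\|_\infty^2 T/N$, and the finitely many slow bonds contribute at most $C_H\,k\,N^{-\beta-2}T$, so $\<M^{N,H}\>_T\to 0$ uniformly in $\eta$. Doob's inequality then gives $\bb E_{\mu_N}^\beta[\sup_{t\le T}(M^{N,H}_t)^2]\to 0$, so the martingale part satisfies Aldous' condition trivially. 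Combined with the a priori bound $|\<\pi^N_t,H\>|\le\|H\|_\infty$, this yields tightness of $\<\pi^N_\cdot,H\>$ and, via the reduction, of $\{\bb Q^{\beta,N}_{\mu_N}\}$.

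The argument is uniform in $\beta\in[0,\infty)$: the only delicate point is the singular behavior of $N^2\bb L_N H$ at sites adjacent to slow bonds, but only $O(k)$ such sites exist and the empirical-measure prefactor $1/N$ absorbs them irrespective of $\beta$. In other words, tightness presents no genuine obstacle; the $\beta$-dependence of the model will enter non-trivially only in the characterization of limit points, where the \emph{Replacement Lemma} and, for $\beta>1$, sharp energy estimates will do the real work.
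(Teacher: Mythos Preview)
Your proof is correct and follows essentially the same approach as the paper: Dynkin's martingale decomposition for $H\in C^2(\bb T)$, a uniform bound on the integrand (discrete Laplacian away from $\Gamma_N$, with the $O(N)$ contribution near slow bonds absorbed by the $1/N$ prefactor), and vanishing quadratic variation plus Doob's inequality for the martingale part. The only cosmetic difference is that you invoke Aldous' criterion explicitly where the paper cites Proposition~4.1.6 of \cite{kl} for the Lipschitz-bounded integral term.
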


\begin{proof}
In order to prove
tightness of $\{\pi^{N}_t : 0\le t \le T\}$ it is enough to
show tightness of the real-valued processes $\{\<\pi^{N}_t ,H\> :
0\le t \le T\}$ for $H\in{C(\bb{T})}$. In fact, c.f. \cite{kl} it is enough to show tightness of $\{\<\pi^{N}_t ,H\> :
0\le t \le T\}$ for a dense set of functions in $C(\bb{T})$ with respect to the uniform topology. For that purpose, fix $H\in C^2(\bb T)$.
By Dynkin's formula,
\begin{equation}\label{M}
M^{N}_{t}(H)=\<\pi^{N}_{t}, H\>- \<\pi^{N}_{0}, H\>-\int_{0}^{t}N^2\mc L_{N}\<\pi^{N}_{s},H\>\,ds\,,
\end{equation}
is a martingale with respect to the natural filtration $\mathcal{F}_t:=\sigma(\eta_s: s\leq{t})$. In order to prove tightness of $\{\<\pi^{N}_t ,H\>:N\geq{1}\}$, we prove tightness
of the sequence of the martingales and the integral terms in the decomposition above. We start by the former.

We begin by showing that the $L^2(\mathbb{P}_{\mu_N}^\beta)$-norm of the martingale above vanishes as $N\rightarrow{+\infty}$.
The quadratic variation of $M^{N}_{t}(H)$  is given by
\begin{eqnarray}\label{varquad}
\!\!\!\!\!\!\!\!\<M^{N}(H)\>_t& = & \int_{0}^{t} \sum_{x\in\bb T_{N}} \xi^{N}_{x,x+1}\Big[(\eta_{s}(x)-\eta_{s}(x+1))
(H(\pfrac{x+1}{N})-H(\pfrac{x}{N}))\Big]^2ds.
\end{eqnarray}
It is easy to show that
$\<M^{N}(H)\>_t \;\leq\;   \pfrac{T}{N}\|\partial_u H\|_{\infty}^2$. Here and in the sequel we use the notation $\|H\|_{\infty}:=\sup_{u\in{\bb T}}|H(u)|.$

Thus,  $M^{N}_{t}(H)$ converges
to zero as $N\rightarrow{+\infty}$ in $L^2(\mathbb{P}_{\mu_N}^\beta)$. Notice that above we used the trivial bound $\xi^N_{x,x+1}\leq 1$. By Doob's inequality, for every $\delta>0$,
\begin{equation}\label{limprob}
\lim_{N\rightarrow\infty}\bb P_{\mu_N}^\beta\left[\sup_{0\leq t\leq T} |M^{N}_{t}(H)|>\delta\right]=0\,,
\end{equation}
which implies tightness of the sequence of martingales $\{M^{N}_{t}(H); N\geq 1\}$.
Now, we need to examine tightness of the integral term in (\ref{M}).

 Denote by $\Gamma_N$ the subset of sites $x\in \mathbb{T}_N$ such that $x$ has some adjacent slow bond, namely,
$\xi^N_{x,x+1}=N^{-\beta}$ or  $\xi^N_{x-1,x}=N^{-\beta}$.
The term $N^{2}\mc L_{N}\<\pi^{N}_{s},H\>$ appearing inside the time integral in (\ref{M}) is explicitly given by
\begin{equation*}
\begin{split}
 & N\sum_{x\notin\Gamma_N}
\eta_{s}(x)\Big[H(\pfrac{x+1}{N})+H(\pfrac{x-1}{N})-2H(\pfrac{x}{N})\Big]\\
+ \,& N\sum_{x\in\Gamma_N}
\eta_{s}(x)\Big[\xi^{N}_{x,x+1}\{H(\pfrac{x+1}{N})-H(\pfrac{x}{N})\}+\xi^N_{x-1,x}
\{H(\pfrac{x-1}{N})-H(\pfrac{x}{N})\}\Big]\,.
\end{split}
\end{equation*}
By Taylor expansion on $H$, the absolute value of the first sum above is bounded by
$\Vert\partial_u^2 H\Vert_\infty$. Since there are at most $2k$ elements in $\Gamma_N$,
$\xi_{x,x+1}\leq 1$ and since there is only one particle per site, the absolute value of the
second sum above is bounded by $2\,k\| \partial_u H\|_{\infty}$.
Therefore, there exists a constant $C:=C(H,k)>0$, such that
$|N^2\mc L_{N}\<\pi^{N}_{s},H\>|$ $\leq C$, which yields
\begin{equation*}
 \left|\int_{r}^{t}N^2\mc L_{N}\<\pi^{N}_{s},H\>ds\right|\leq C|t-r|\,.
\end{equation*}
By Proposition 4.1.6 of \cite{kl}, last inequality implies tightness of the integral term. This concludes the proof.
\end{proof}

\section{Replacement Lemma and Energy Estimates}\label{s5}
In this section, we obtain fundamental results that allow us to replace the mean occupation of a
site by the mean density of particles in a small macroscopic box around this site. This result implies that the limit trajectories must belong to  some Sobolev space, this will be clear later. Before proceeding we introduce some tools that we use in the sequel.  \\

Denote by $H_N (\mu_N | \nu_\alpha)$ the entropy of a probability
measure $\mu_N$ with respect to the invariant state $\nu_\alpha$. For a precise definition and properties of the entropy, we
refer the reader to \cite{kl}. In Proposition \ref{K0} in the Appendix we review a classical result saying that there exists a finite constant $K_0:=K_0(\alpha)$, such that
\begin{equation}
\label{f06}
H_N (\mu_N | \nu_\alpha) \;\le\; K_0 N,
\end{equation}
for any probability measure $\mu_N\in{\{0,1\}^{\mathbb{T}_N}}$.

Denote by $\< \cdot, \cdot \>_{\nu_\alpha}$ the scalar product of
$L^2(\nu_\alpha)$  and denote by $\mf D_N$  the Dirichlet form, which is the convex and lower
semicontinuous functional (see Corollary A1.10.3 of \cite{kl}) defined as:
\begin{equation*}
\mf D_N (f) \;=\; \< - L_N \sqrt f \,,\, \sqrt f\>_{\nu_\alpha},
\end{equation*}
where $f$ is a probability density with respect to $\nu_\alpha$ (i.e.
$f\ge 0$ and $\int f d\nu_\alpha =1$). An elementary computation shows
that
\begin{eqnarray*}
\!\!\!\!\!\!\!\!\!\!\!\!\!\! &&
\mf D_N (f) \;=\; \sum_{x\in \bb T_N} \frac{ \xi_{x,x+1}^N}{2}
\int  \Big( \sqrt{f(\eta^{x,x+1})} -
\sqrt{f(\eta)} \Big)^2 \, d\nu_\alpha \;.
\end{eqnarray*}
By Theorem A1.9.2 of \cite{kl}, if $\{S^N_t : t\ge 0\}$ stands for the
semi-group associated to the generator $N^2\mathcal{L}_N$, then
\begin{equation*}
H_N (\mu_N S^N_t | \nu_\alpha) \; +\; N^2 \, \int_0^t
\mf D_N (f^N_s) \, ds  \;\le\; H_N (\mu_N | \nu_\alpha)\;,
\end{equation*}
provided $f^N_s$ stands for the Radon-Nikodym derivative of $\mu_N
S^N_s$ (the distribution of $\eta_s$ starting from $\mu_N$) with respect to $\nu_\alpha$. \\

\subsection{Replacement Lemma}

\quad
\vspace{0.2cm}

Now, we define the local density of particles,
which corresponds to the mean occupation in a box around a given site. We represent this empirical density in the box of size $\ell$ around a given site $x$ by $\eta^{\ell}(x)$.
For $\beta\in[0,1)$, this
box can be chosen in the usual way, but for $\beta\in[1,\infty)$, this box must avoid the slow bond. From this point on, we denote the integer part of $\eps N$, namely $\lfloor \eps N\rfloor$, simply by $\eps N$.
\begin{definition}\label{def4}
For $\beta\in[0,1)$, define the empirical density by
\begin{equation*}
 \eta^{\eps N}(x)\;=\;\pfrac{1}{\eps N}\sum_{y=x+1}^{x+\eps N}\eta(y)\,.
\end{equation*}
\end{definition}
\begin{definition}\label{def5}
For $\beta\in[1,\infty)$, if $x$ is such that
 $\{Nb_1,\ldots,N b_k\}\cap \{x,\ldots,x+\eps N\}=\varnothing$, then the empirical density
is defined by
\begin{equation*}
 \eta^{\eps N}(x)\;=\;\pfrac{1}{\eps N}\sum_{y=x+1}^{x+\eps N}\eta(y)\,.
\end{equation*}
Otherwise, if, let us say, $Nb_i\in \{x,\ldots,x+\eps N\}$ for some $i=1,..,k$, then the empirical density is defined by
\begin{equation*}
 \eta^{\eps N}(x)\;=\;\pfrac{1}{\eps N}\sum_{y=Nb_i-\eps N+1}^{Nb_i}
\eta(y)\,.
\end{equation*}

Since we are considering a finite number of slow bonds, the distance between two consecutive macroscopic points related to two consecutive slow bonds is at least $\eps$, for $\eps$ sufficiently small. As a consequence, we can suppose, without lost of generality that in the previous definition, $b_i$ is unique.

\end{definition}
\begin{lemma}\label{lema41}
Fix $\beta\in[0,1)$. Let $f$ be a density with respect to the invariant measure $\nu_\alpha$. Then,
\begin{equation*}
 \int \{\eta(x)-\eta^{\eps N}(x)\}f(\eta)\nu_\alpha(d\eta)\;\leq\; 2(kN^{\beta-1}+\eps)+N\,\mf D_N(f)\,, \forall{x\in{\mathbb{T}_N}}.
\end{equation*}
\end{lemma}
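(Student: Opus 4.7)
\medskip

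\noindent\emph{Proof plan.} The plan is the standard telescoping-plus-Young argument, with a bond-dependent choice of the Young parameter that prevents the slow bonds from spoiling the estimate. First, Definition \ref{def4} together with a switch in the order of summation gives
\begin{equation*}
 \eta(x)-\eta^{\eps N}(x) \;=\; \pfrac{1}{\eps N}\sum_{y=x+1}^{x+\eps N}\big(\eta(x)-\eta(y)\big) \;=\; \pfrac{1}{\eps N}\sum_{z=x}^{x+\eps N-1} c_z\,\big(\eta(z)-\eta(z+1)\big),
\end{equation*}
with $c_z := x+\eps N-z \in \{1,\ldots,\eps N\}$. For each bond $(z,z+1)$, invariance of $\nu_\alpha$ under the exchange $\eta\leftrightarrow\eta^{z,z+1}$ yields the symmetrization
\begin{equation*}
\int (\eta(z)-\eta(z+1))f\,d\nu_\alpha \;=\; \pfrac{1}{2}\int (\eta(z)-\eta(z+1))\big(f(\eta)-f(\eta^{z,z+1})\big)\,d\nu_\alpha.
\end{equation*}

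Next I would factor $f-g=(\sqrt f-\sqrt g)(\sqrt f+\sqrt g)$ and apply $|ab|\le a^2/(2A_z)+A_zb^2/2$ with a bond-dependent parameter $A_z>0$; using $|\eta(z)-\eta(z+1)|\le 1$, $(\sqrt f+\sqrt g)^2\le 2(f+g)$ and $\int f(\eta^{z,z+1})d\nu_\alpha=1$, this gives
\begin{equation*}
\int (\eta(z)-\eta(z+1))f\,d\nu_\alpha \;\le\; \pfrac{1}{2A_z\,\xi^N_{z,z+1}}\,\mf D_N^z(f) \;+\; A_z,
\end{equation*}
where $\mf D_N^z(f):=\tfrac{\xi^N_{z,z+1}}{2}\int(\sqrt{f(\eta)}-\sqrt{f(\eta^{z,z+1})})^2 d\nu_\alpha$ is the contribution of bond $(z,z+1)$ to $\mf D_N(f)$. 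Choosing $A_z:=c_z/(2\eps N^2\,\xi^N_{z,z+1})$ makes the coefficient of $\mf D_N^z(f)$, after the weight $c_z/(\eps N)$ coming from the telescoping, equal to exactly $N$, so that summing and enlarging to the full torus produces $N\mf D_N(f)$. The residual is $\sum_z c_z^2/(2\eps^2 N^3\,\xi^N_{z,z+1})$: using $\sum_{j=1}^{\eps N}j^2\le(\eps N)^3/3$, the non-slow bonds contribute at most $\eps/6$, while each of the at most $k$ slow bonds contributes at most $(\eps N)^2 N^\beta/(2\eps^2 N^3)=N^{\beta-1}/2$, for a total at most $kN^{\beta-1}/2+\eps/6\le 2(kN^{\beta-1}+\eps)$.

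The main obstacle is this bond-dependent tuning of $A_z$: a uniform choice would either leave a Dirichlet coefficient of order $N^{1+\beta}$ at a slow bond (which cannot be absorbed into $N\mf D_N(f)$) or inflate the residual by a factor $N^\beta$. Isolating the slow-bond cost into $kN^{\beta-1}$ is precisely what makes this replacement useful in the regime $\beta\in[0,1)$, which is why Definition~\ref{def4} (an $\eps N$-window that is allowed to contain slow bonds) is admissible in this range; for $\beta\ge 1$ the residual $kN^{\beta-1}$ no longer vanishes and one must switch to the slow-bond-avoiding window of Definition~\ref{def5}.
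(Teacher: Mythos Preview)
Your proof is correct and follows essentially the same scheme as the paper's: telescoping, symmetrization via the exchange invariance of $\nu_\alpha$, the factorization $f-g=(\sqrt f-\sqrt g)(\sqrt f+\sqrt g)$, and a Young/Cauchy--Schwarz inequality with a bond-dependent parameter to produce $N\mf D_N(f)$ plus a residual that is small when $\beta<1$.

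The only noteworthy difference is in bookkeeping. You first switch the order of summation to collapse the double sum into $\tfrac{1}{\eps N}\sum_z c_z(\eta(z)-\eta(z+1))$ and then choose $A_z=c_z/(2\eps N^2\xi^N_{z,z+1})$ depending on both the weight $c_z$ and the conductance. The paper instead keeps the double sum $\tfrac{1}{2\eps N}\sum_y\sum_z$ and splits Cauchy--Schwarz as $A/\xi^N_{z,z+1}$ versus $\xi^N_{z,z+1}/A$ with a single $A=1/N$: the conductance is folded into the split itself, so no position-dependence is needed. Both routes land on the same bound; yours gives slightly sharper constants ($\eps/6$ and $kN^{\beta-1}/2$ versus $2\eps$ and $2kN^{\beta-1}$), while the paper's is marginally shorter. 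Your diagnosis of the ``main obstacle'' is right in spirit---some conductance-aware choice is essential, and a fully uniform $A_z$ fails---but the $c_z$-dependence is not actually required, as the paper's argument shows.
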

 \begin{proof}
From Definition \ref{def4} we have that
 \begin{equation*}
 \int \{\eta(x)-\eta^{\eps N}(x)\}f(\eta)\nu_\alpha(d\eta)=
 \int \Big\{\pfrac{1}{\eps N}\sum_{y=x+1}^{x+\eps N}(\eta(x)-\eta(y))\Big\}f(\eta)\,\nu_\alpha(d\eta)\,.
 \end{equation*}
Writing $\eta(x)-\eta(y)$ as a telescopic sum, the last expression becomes equal to
 \begin{equation*}
  \int \Big\{\pfrac{1}{\eps N}\sum_{y=x+1}^{x+\eps N}
\sum_{z=x}^{y-1}(\eta(z)-\eta(z+1))\Big\}f(\eta)\,\nu_\alpha(d\eta)\,.
 \end{equation*}
Rewriting the expression above as twice the half and making the transformation $\eta\mapsto \eta^{z,z+1}$ (for
which the probability $\nu_\alpha$ is invariant) it becomes as:
\begin{equation*}
 \pfrac{1}{2\eps N}\sum_{y=x+1}^{x+\eps N}
\sum_{z=x}^{y-1}\int \{ \eta(z)-\eta(z+1)\}(f(\eta)-f(\eta^{z,z+1}))\,\nu_\alpha(d\eta)\,.
 \end{equation*}
Since $(a-b)=(\sqrt{a}-\sqrt{b})(\sqrt{a}+\sqrt{b})$ and by the Cauchy-Schwarz's inequality, for any $A>0$, we bound the previous expression from above by
\begin{equation*}
\begin{split}
 &\pfrac{1}{2\eps N}\sum_{y=x+1}^{x+\eps N}
\sum_{z=x}^{y-1}\frac{A}{\xi_{z,z+1}^N}\int\{\eta(z)-\eta(z+1)\}^2\Big(\sqrt{f(\eta)}+\sqrt{f(\eta^{z,z+1})}\Big)^2\,\nu_\alpha(d\eta)\\\
 +\,&\pfrac{1}{2\eps N}\sum_{y=x+1}^{x+\eps N}
\sum_{z=x}^{y-1}\frac{\xi_{z,z+1}^N}{A}\int \Big(\sqrt{f(\eta)}-\sqrt{f(\eta^{z,z+1})}\Big)^ 2\,\nu_\alpha(d\eta)\,.
\end{split}
 \end{equation*}

The second sum above is bounded by
\begin{equation*}
\pfrac{1}{2\eps N}\sum_{y=x+1}^{x+\eps N} \sum_{z\in{\bb{T}_N}}\frac{\xi_{z,z+1}^N}{A}\int \Big(\sqrt{f(\eta)}-\sqrt{f(\eta^{z,z+1})}\Big)^ 2\nu_\alpha(d\eta)\\
=\pfrac{1}{A}\mf D_N(f)\,.
 \end{equation*}
 On the other hand, since $f$ is a density,
the first sum
is bounded from above by
\begin{equation*}
 \pfrac{1}{2\eps N}\sum_{y=x+1}^{x+\eps N} \sum_{z=x}^{y-1}\frac{4A}{\xi_{z,z+1}^N}\leq
\pfrac{1}{\eps N}\sum_{y=x+1}^{x+\eps N}2A(kN^\beta+\eps N)=2A(kN^\beta+\eps N)\,.
\end{equation*}
Notice that the term $kN^{\beta}$ comes from the existence of $k$ slow bonds. Choosing $A=\pfrac{1}{N}$, the proof ends.
 \end{proof}

\begin{lemma}[Replacement Lemma]\label{replace1}
\quad

Fix $\beta\in[0,1)$.
 Let $b\in \bb T$ and let $x$ be the right (or left) vertex of the bond containing the macroscopic point $b$. Then,
\begin{equation*}
 \varlimsup_{\eps\to 0}\varlimsup_{N\to\infty}
\bb E_{\mu_N}^\beta\Big[\,\Big|\int_0^t\{\eta_s(x)-\eta_s^{\eps N}(x)\}\, ds\Big|\,\Big]\;=\;0\,.
\end{equation*}
\end{lemma}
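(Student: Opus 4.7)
The plan is to follow the standard three-step scheme for replacement lemmas---entropy inequality, Feynman--Kac, variational formula---combined with Lemma~\ref{lema41} in its unoptimized form, keeping the free parameter $A$ from its proof. Set $V(\eta)=\eta(x)-\eta^{\eps N}(x)$ and denote the time integral by $Y=\int_0^t V(\eta_s)\,ds$. The entropy inequality together with $e^{\theta|Y|}\le e^{\theta Y}+e^{-\theta Y}$ gives, for every $\theta>0$,
\begin{equation*}
\bb E_{\mu_N}^\beta[|Y|]\;\le\;\frac{\log 2+H_N(\mu_N|\nu_\alpha)}{\theta}\;+\;\frac{1}{\theta}\log\max_{\sigma=\pm 1}\bb E_{\nu_\alpha}\big[e^{\sigma\theta Y}\big].
\end{equation*}

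Applying the Feynman--Kac formula to the process speeded up by $N^2$, and using the reversibility of $\nu_\alpha$, each of these two exponential expectations is bounded by $e^{t\lambda^\pm}$ with
\begin{equation*}
\lambda^\pm \;=\; \sup_f\Big\{\pm\theta\int V\,f\,d\nu_\alpha\;-\;N^2\,\mf D_N(f)\Big\},
\end{equation*}
where $f$ ranges over densities with respect to $\nu_\alpha$. Reading off the proof of Lemma~\ref{lema41} \emph{before} fixing $A=1/N$ yields the more flexible estimate
\begin{equation*}
\int V f\,d\nu_\alpha \;\le\; 2A(kN^\beta+\eps N)\;+\;\frac{1}{A}\,\mf D_N(f),\qquad\text{for every }A>0,
\end{equation*}
and by symmetry of the telescoping argument the same bound holds with $V$ replaced by $-V$. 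Substituting into the variational problem and choosing $A=\theta/N^2$ cancels the coefficient of $\mf D_N(f)$, leaving $\lambda^\pm\le 2\theta^2(kN^{\beta-1}+\eps)/N$.

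Inserting this and the entropy bound $H_N(\mu_N|\nu_\alpha)\le K_0 N$ from \eqref{f06} back into the entropy inequality produces
\begin{equation*}
\bb E_{\mu_N}^\beta[|Y|]\;\le\;\frac{K_0 N+\log 2}{\theta}\;+\;\frac{2t\theta(kN^{\beta-1}+\eps)}{N}.
\end{equation*}
Optimizing over $\theta$ (the optimum has order $N(kN^{\beta-1}+\eps)^{-1/2}$) produces a bound of order $\sqrt{t(kN^{\beta-1}+\eps)}$. For $\beta\in[0,1)$, the term $kN^{\beta-1}$ vanishes as $N\to\infty$, and the residual $\sqrt{\eps}$ then vanishes as $\eps\to 0$.

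The main obstacle is the joint tuning of the two free parameters $\theta$ (from the entropy inequality) and $A$ (from the refined Lemma~\ref{lema41}). Fixing $A=1/N$ as in the statement of that lemma leaves a non-vanishing residual of order $K_0$ in the entropy term; one must keep $A$ free and balance it against $\theta$ via $A=\theta/N^2$. The constraint $\beta<1$ enters precisely through the condition $kN^{\beta-1}\to 0$, and it is sharp: at $\beta=1$ the rate just barely fails to decay, which is the mechanism behind the phase transition described in Theorem~\ref{t1}.
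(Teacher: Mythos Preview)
Your proof is correct and follows the same entropy--Feynman--Kac--variational scheme as the paper, with Lemma~\ref{lema41} as the key static input. The only difference is bookkeeping: the paper writes the entropy parameter as $\gamma N$ and sends $\gamma\to\infty$ after the $N$ and $\eps$ limits rather than optimizing quantitatively over $\theta$; your remark that one must keep the Cauchy--Schwarz parameter $A$ free (not frozen at $1/N$) is exactly the detail needed to make the Dirichlet-form coefficients match when $\gamma$ is taken large, a point the paper's exposition leaves implicit.
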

\begin{proof}
From Jensen's inequality together with the entropy inequality (see for example Appendix 1 of \cite{kl}), for any $\gamma\in\bb R$ (which will be chosen large),
the expectation appearing on the statement of the Lemma is bounded from above by
\begin{equation}\label{log}
\frac{H_N(\mu_N|\nu_\alpha)}{\gamma N}+\frac{1}{\gamma N}\log \bb E_{\nu_\alpha} \Big[
\exp\Big\{\gamma\,N\Big|\int_0^t \{
\eta_s(x)-\eta^{\eps N}_s(x)\}\,ds\Big|\Big\}\Big]\,.
\end{equation}
By Proposition \ref{K0}, $H_N(\mu_N|\nu_\alpha)\leq K_0\,N$, so that it remains to focus on the second summand
above. Since $e^{|x|}\leq e^x+e^{-x}$ and
\begin{equation}\label{log bounds}
\varlimsup_N \pfrac{1}{N}\log (a_N+b_N)= \max\Big\{
\varlimsup_N \pfrac{1}{N}\log a_N,\varlimsup_N \pfrac{1}{N}\log b_N\Big\}\,,
\end{equation}
we can remove the modulus inside the exponential. By Feynman-Kac's formula, see Lemma A1.7.2 of \cite{kl} and Proposition
\ref{densidade},
 the second term on the right hand side of \eqref{log} is less than or equal to
\begin{equation*}
t\sup_{f\textrm{~density}}\Big\{\int
\{\eta(x)-\eta^{\eps N}(x)\}f(\eta)\nu_\alpha(d\eta) - N\,\mf D_N(f)\Big\}\,.
\end{equation*}
Applying Lemma \ref{lema41} and recalling that $\gamma$ is arbitrarily large, the proof finishes.
\end{proof}

The next two results are concerned with both cases $\beta=1$ and $\beta\in(1,\infty)$.

\begin{lemma}\label{lemma43} Fix $\beta\in[1,\infty)$. Let $f$ be a density with respect to the invariant measure $\nu_\alpha$.
Then,
\begin{equation*}
\int  \{\eta(x)-\eta^{\eps N}(x)\}f(\eta)\nu_\alpha(d\eta)
\leq N\mf D_N(f)+4\eps\,, \forall{x\in{\mathbb{T}_N}}.
\end{equation*}
Moreover, given a function $H:\bb T\to\bb R$:
\begin{equation*}
\pfrac{1}{N} \sum_{x\in{\bb{T}_N}}\int  H(\pfrac{x}{N})\{\eta(x)-\eta^{\eps N}(x)\}f(\eta)\nu_\alpha(d\eta)
\leq N\mf D_N(f)+\pfrac{4\eps}{N}\sum_{x\in{\bb{T}_N}} \Big(H(\pfrac{x}{N})\Big)^2\,.
\end{equation*}
\end{lemma}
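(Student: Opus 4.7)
The plan is to mimic the argument of Lemma \ref{lema41} with one key modification specific to $\beta\in[1,\infty)$: the definition of $\eta^{\eps N}(x)$ in Definition \ref{def5} is engineered precisely so that the telescoping path from $x$ to any site in the chosen box traverses only bonds of conductance one. When the standard box $\{x+1,\ldots,x+\eps N\}$ avoids every $Nb_i$, the bonds $(z,z+1)$ for $z\in\{x,\ldots,x+\eps N-1\}$ are all fast; when some $Nb_i$ enters that window, the box is shifted to $\{Nb_i-\eps N+1,\ldots,Nb_i\}$, which keeps the telescope strictly to the left of the slow bond $(Nb_i,Nb_i+1)$. Hence in every Cauchy--Schwarz step the factor $1/\xi^N_{z,z+1}$ equals one, and the blow-up $N^\beta$ that was present in the proof of Lemma \ref{lema41} simply disappears. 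With this observation, the first (pointwise) inequality is obtained by repeating verbatim the proof of Lemma \ref{lema41}: telescope $\eta(x)-\eta^{\eps N}(x)$ as an average of sums of gradients, symmetrize using $\nu_\alpha$-invariance under the flip $\eta\mapsto\eta^{z,z+1}$, and apply Young's inequality $2|ab|\leq A a^2+A^{-1}b^2$; choosing $A=1/N$ gives $A^{-1}\mf D_N(f)=N\mf D_N(f)$ and an $O(\eps)$ remainder comfortably below $4\eps$.

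For the second (weighted summed) inequality, summing the pointwise bound over $x$ is not good enough, since each bond $z$ is reused by the telescopes of $\Theta(\eps N)$ different sites and the Dirichlet form would be overcounted, producing a bound of the form $\mf D_N(f)\cdot\frac{1}{N}\sum_x |H(x/N)|$ instead of $N\mf D_N(f)$. The remedy is to postpone Young's inequality until after swapping the order of summation in $x$ and $z$:
\begin{equation*}
\sum_x H\bigl(\pfrac{x}{N}\bigr)\{\eta(x)-\eta^{\eps N}(x)\} \;=\; \sum_z a_z\,\{\eta(z)-\eta(z+1)\},
\end{equation*}
where $a_z=\frac{1}{\eps N}\sum_{(x,y):\,z\in\mathrm{path}(x,y)}H(x/N)$ and the sum runs only over fast bonds. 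Symmetrizing via $\nu_\alpha$-invariance and applying Young's inequality just once, at the level of the bond sum, with parameter $B\sim 1/N^2$, yields a decomposition of the form $\tfrac{B}{N}\sum_z a_z^2+\tfrac{1}{2NB}\mf D_N(f)$, in which the Dirichlet-form contribution is calibrated to exactly $N\mf D_N(f)$.

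The only delicate step, and what I expect to be the main obstacle, is the combinatorial bound $\sum_z a_z^2\leq C\eps^2 N^2\sum_x H(x/N)^2$. I would obtain it by Cauchy--Schwarz applied to the definition of $a_z$, combined with the elementary counts that each bond $z$ lies in the telescoping path of $O((\eps N)^2)$ pairs $(x,y)$ while, upon reversing the order of summation, each $x$ is counted $O(\eps N)$ times. Feeding this into the Young's step produces an $H^2$-term of order $\eps^2/N$ times $\sum_x H(x/N)^2$, which is controlled by the announced $(4\eps/N)\sum_x H(x/N)^2$ for $\eps$ sufficiently small. The whole argument is essentially a careful balance between the choice of $B$, the $\eps$-scaling of $\sum_z a_z^2$, and the double counting of bonds by the sliding windows.
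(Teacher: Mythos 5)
Your treatment of the first (pointwise) inequality coincides with the paper's: the whole content is that Definition \ref{def5} shifts the averaging box so that every telescoping path stays on bonds of conductance one, after which the argument of Lemma \ref{lema41} runs unchanged with $A=1/N$ and no $N^{\beta}$ loss. For the second inequality, however, you take a genuinely different route, and you do so on a mistaken premise. The paper's proof is simply: establish the pointwise bound in the weighted form
\begin{equation*}
\int H(\pfrac{x}{N})\{\eta(x)-\eta^{\eps N}(x)\}f(\eta)\,\nu_\alpha(d\eta)\;\le\;4\eps N A\Big(H(\pfrac{x}{N})\Big)^2+\frac{\mf D_N(f)}{A}\,,
\end{equation*}
then sum over $x\in\bb T_N$ and divide by $N$. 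There is no overcounting problem: in the Young step the entire weight $(H(\pfrac{x}{N}))^2$ is loaded onto the term carrying the factor $A$, so the Dirichlet contribution of the pointwise bound is $\mf D_N(f)/A$ with no $H$--dependence; summing the $N$ identical copies and dividing by $N$ returns exactly $\mf D_N(f)/A=N\mf D_N(f)$ for $A=1/N$, while the first term becomes $\pfrac{4\eps}{N}\sum_x(H(\pfrac{x}{N}))^2$. The feared bound $\mf D_N(f)\cdot\frac1N\sum_x|H(\pfrac{x}{N})|$ would only arise if one factored $|H|$ out in front of both Young terms, which neither you nor the paper needs to do. Your alternative --- reorganizing the double sum by bonds, bounding $\sum_z a_z^2\le(\eps N)^2\sum_x (H(\pfrac{x}{N}))^2$ by Cauchy--Schwarz together with the path-counting estimates, and applying Young once at the bond level with $B$ of order $N^{-2}$ --- is correct and in fact slightly sharper (it yields an $\eps^2/N$ rather than $4\eps/N$ prefactor on $\sum_x (H(\pfrac{x}{N}))^2$), but it is considerably more work than the one-line deduction the paper uses to pass from the pointwise to the summed statement.
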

\begin{proof} Recall the Definition \ref{def5}.  Let first $x$ be a site
such that there is no slow bond connecting two sites in $\{x,\ldots,x+\eps N\}$. In this case,
\begin{equation*}
\begin{split}
&\int H(\pfrac{x}{N})\{\eta(x)-\eta^{\eps N}(x)\}f(\eta)\nu_\alpha(d\eta)\\
=\,&\int  H(\pfrac{x}{N})\Big\{\pfrac{1}{\eps N}\sum_{y=x+1}^{x+\eps N}
(\eta(x)-\eta(y))\Big\}f(\eta)\nu_\alpha(d\eta)\,,
\end{split}
\end{equation*}
and following the same arguments as in Lemma \ref{lema41},
we bound the previous expression from above by
\begin{equation*}
\begin{split}
&\pfrac{(H(\pfrac{x}{N}))^2}{2\eps N}\sum_{y=x+1}^{x+\eps N}
\sum_{z=x}^{y-1}\int\frac{A}{\xi^N_{z,z+1}}\{\eta(z)-\eta(z+1)\}^2\Big(\sqrt{f(\eta)}+\sqrt{f(\eta^{z,z+1})}\Big)^2\nu_\alpha(d\eta)\\
+&\frac{1}{2\eps N}\sum_{y=x+1}^{x+\eps N}
\sum_{z=x}^{y-1}\int \frac{\xi^N_{z,z+1}}{A}\{\eta(z)-\eta(z+1)\}^2\Big(\sqrt{f(\eta)}-\sqrt{f(\eta^{z,z+1})}\Big)^2 \nu_\alpha(d\eta)\,.
\end{split}
\end{equation*}
Since $\xi^N_{z,z+1}=1$ for all $z\in\{x,\ldots,x+\eps N-1\}$, it yields the boundedness of the previous expression by
\begin{equation*}
2\eps N A \,\Big(H(\pfrac{x}{N})\Big)^2+\frac{\mf D_N(f)}{A}\,.
\end{equation*}
Let now $x$ be a site such that $Nb_i\in\{x,\ldots,x+\eps N\}$ for some $i=1,\ldots,k$.
In this case,
\begin{equation}\label{equality}
\begin{split}
&\int H(\pfrac{x}{N})\{\eta(x)-\eta^{\eps N}(x)\}f(\eta)\nu_\alpha(d\eta)\\
=\,&\int  H(\pfrac{x}{N})\frac{1}{\eps N}\sum_{y=Nb_i-\eps N+1}^{Nb_i} \Big\{\eta(x)-
\eta(y)\Big\}f(\eta)\nu_\alpha(d\eta)\,
\end{split}
\end{equation}
Now we split the last summation into two cases, $y>x$ and $y<x$ and then we proceed by writing $\eta(x)-\eta(y)$ as a telescopic sum as in Lemma \ref{lema41}.
Then, by the same arguments of Lemma \ref{lema41} and since $\xi^N_{z,z+1}=1$ for all $z$ in the range $\{Nb_i-\eps N+1,\ldots,Nb_i-1\}$,
we bound the previous expression by
\begin{equation*}
4\eps N A\Big(H(\pfrac{x}{N})\Big)^2+\frac{\mf D_N(f)}{A}\,.
\end{equation*}
Now the first claim of the lemma follows by taking the particular case
$H(\pfrac{x}{N})=1$ and choosing  $A=\pfrac{1}{N}$.

Finally, if in \eqref{equality} we sum over $x\in \bb{T}_N$ and then divide by $N$, one concludes
 the second claim of the lemma.
\end{proof}

\begin{lemma}[Replacement Lemma]\label{replace2}
\quad

Fix $\beta\in [1,\infty)$. Then, for every $x\in{\mathbb{T}_N}$
\begin{equation*}
\varlimsup_{\eps\to 0}\varlimsup_{N\to \infty}\bb E_{\mu_N}^\beta\Big[\,\Big|\int_0^t
\{\eta_s(x)-\eta^{\eps N}_s(x)\}\,ds\,\Big|\,\Big]\;=\;0\,.
\end{equation*}
Moreover, given a function $H:\bb T\to \bb R$ satisfying
\begin{equation*}
 \varlimsup_{N\to\infty}\pfrac{1}{N}\sum_{x\in \bb{T}_N}\Big(H(\pfrac{x}{N})\Big)^2\;<\;\infty\,,
\end{equation*}
also holds
\begin{equation*}
\varlimsup_{\eps\to 0}\varlimsup_{N\to \infty}\bb E_{\mu_N}^\beta\Big[\,\Big|\int_0^t\pfrac{1}{N}\sum_{x\in{\bb{T}_N}} H(\pfrac{x}{N})\{
\eta_s(x)-\eta^{\eps N}_s(x)\}\,ds\,\Big|\,\Big]\;=\;0\,.
\end{equation*}
\end{lemma}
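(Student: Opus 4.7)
My plan is to adapt the entropy-plus-Feynman-Kac argument used to prove Lemma \ref{replace1}, substituting Lemma \ref{lemma43} for Lemma \ref{lema41}. The two assertions of Lemma \ref{replace2} will be treated in parallel; they differ only in the choice of potential fed into Feynman-Kac and in which half of Lemma \ref{lemma43} is invoked.

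For the single-site statement, fix $x\in\bb T_N$ and $\gamma>0$. Jensen's inequality together with the entropy inequality at level $\gamma N$, followed by Proposition \ref{K0}, bound the expectation on the left-hand side of the statement by
\begin{equation*}
\frac{K_0}{\gamma}+\frac{1}{\gamma N}\log\bb E_{\nu_\alpha}\Big[\exp\Big\{\gamma N\Big|\int_0^t\{\eta_s(x)-\eta^{\eps N}_s(x)\}\,ds\Big|\Big\}\Big]\,.
\end{equation*}
The inequality $e^{|x|}\leq e^x+e^{-x}$ together with \eqref{log bounds} lets me remove the absolute value inside the exponential at the cost of a limsup split. Feynman-Kac's formula (Lemma A1.7.2 of \cite{kl}) and the variational characterization of the top eigenvalue of $\gamma N\{\eta(x)-\eta^{\eps N}(x)\}+N^2\mc L_N$ then reduce the surviving term to
\begin{equation*}
t\sup_{f\text{ density}}\Big\{\int\{\eta(x)-\eta^{\eps N}(x)\}\,f(\eta)\,\nu_\alpha(d\eta)-N\,\mf D_N(f)\Big\}\,.
\end{equation*}
The first assertion of Lemma \ref{lemma43} bounds the expression inside the supremum by $4\eps$, uniformly in $f$, $N$, and $x$, so the full bound becomes $K_0/\gamma+4\eps t$. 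Sending successively $N\to\infty$, $\eps\to 0$, and $\gamma\to\infty$ finishes the proof of the first statement.

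For the weighted sum statement, the same scheme applies with the potential replaced by $V_N(\eta)=\pfrac{1}{N}\sum_{x\in\bb T_N}H(\pfrac{x}{N})\{\eta(x)-\eta^{\eps N}(x)\}$. The same entropy-plus-Feynman-Kac reduction leads me to
\begin{equation*}
t\sup_{f\text{ density}}\Big\{\pfrac{1}{N}\sum_{x\in\bb T_N}H(\pfrac{x}{N})\int\{\eta(x)-\eta^{\eps N}(x)\}\,f(\eta)\,\nu_\alpha(d\eta)-N\,\mf D_N(f)\Big\}\,,
\end{equation*}
and now the second assertion of Lemma \ref{lemma43} controls the integral by $N\mf D_N(f)+\pfrac{4\eps}{N}\sum_x(H(\pfrac{x}{N}))^2$. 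The Dirichlet-form contributions cancel exactly, leaving $\pfrac{4\eps}{N}\sum_x(H(\pfrac{x}{N}))^2$. The standing hypothesis $\varlimsup_N \pfrac{1}{N}\sum_{x\in\bb T_N}(H(\pfrac{x}{N}))^2<\infty$ is precisely what makes this quantity $O(\eps)$ uniformly in $N$, so successive limits $N\to\infty$, $\eps\to 0$, $\gamma\to\infty$ again yield zero.

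I do not anticipate any serious obstacle: the delicate geometric input, namely the construction of an empirical average that sits entirely on one side of any slow bond (Definition \ref{def5}), has already been absorbed into Lemma \ref{lemma43}. The key structural feature distinguishing $\beta\in[1,\infty)$ from $\beta\in[0,1)$ is that Lemma \ref{lemma43} produces no $N^{\beta-1}$ remainder --- the telescopic sum in its proof never crosses a conductance $N^{-\beta}$ --- so the replacement estimate is $\beta$-free and holds uniformly for every $\beta\geq 1$, in contrast with Lemma \ref{lema41}, whose residual $O(kN^{\beta-1})$ vanishes only because $\beta<1$.
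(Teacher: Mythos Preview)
Your proposal is correct and follows essentially the same route as the paper's own proof: reduce to the variational expressions via entropy, Jensen, removal of the absolute value, and Feynman--Kac exactly as in Lemma~\ref{replace1}, and then invoke the two halves of Lemma~\ref{lemma43} in place of Lemma~\ref{lema41}. Your closing remark about why no $N^{\beta-1}$ remainder appears (the telescopic sums in Lemma~\ref{lemma43} never cross a slow bond, thanks to Definition~\ref{def5}) is a helpful gloss that the paper leaves implicit.
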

\begin{proof}
The proof follows exactly the same arguments in Lemma \ref{replace1}. Therefore,
 is sufficient to show that the expressions
\begin{equation*}
t\sup_{f\textrm{~density}}\Big\{\int \{\eta(x)-\eta^{\eps N}(x)\}f(\eta)d\nu_\alpha - N\mf D_N(f)\Big\}
\end{equation*}
and
\begin{equation*}
t\sup_{f\textrm{~density}}\Big\{\int \pfrac{1}{N}
\sum_x H(\pfrac{x}{N})\{\eta(x)-\eta^{\eps N}(x)\}f(\eta)d\nu_\alpha - N\mf D_N(f)\Big\}\,,
\end{equation*}
vanish as $N\rightarrow{+\infty}$, which is an immediate consequence of Lemma \ref{lemma43}.
\end{proof}
In the next subsection, we will need the following variation of Lemma \ref{lemma43}:
\begin{lemma}\label{lemma55}
 Let $H:\bb{T}\to\bb R$ and let $f$ be a density with respect to $\nu_\alpha$. Then, for every $x\in{\mathbb{T}_N}$
\begin{equation*}
\begin{split}
&\int \frac{1}{\eps N}\sum_{x\in \bb{T}_N} H(\pfrac{x}{N})\Big\{\eta(x)-\eta(x+\eps N)\Big\}f(\eta)\,\nu_\alpha(d\eta)\\
\leq \,&N\mf D_N(f)+\frac{2}{\eps N}\sum_{x\in \bb{T}_N}\Big(H(\pfrac{x}{N})\Big)^2
\Big\{\eps+N^{\beta-1}\sum_{i=1}^k\textbf{1}_{[b_i,b_i+\eps)}(\pfrac{x}{N})\Big\}\,.
\end{split}
\end{equation*}
\end{lemma}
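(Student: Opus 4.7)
The plan is to adapt the argument of Lemma~\ref{lemma43} to a displacement of $\eps N$ sites while keeping the weight $H(\pfrac{x}{N})$ inside every step. First I would write $\eta(x)-\eta(x+\eps N) = \sum_{z=x}^{x+\eps N-1}(\eta(z)-\eta(z+1))$ as a telescopic sum, and then symmetrize the integral by expressing it as half of itself and making the change of variables $\eta \mapsto \eta^{z,z+1}$ in one copy (allowed because $\nu_\alpha$ is invariant under exchanges), which replaces $f(\eta)$ by $f(\eta)-f(\eta^{z,z+1})$.

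Next I would use the factorization $a-b = (\sqrt{a}-\sqrt{b})(\sqrt{a}+\sqrt{b})$ together with Young's inequality $2|XY|\le X^2/B + BY^2$ applied to the two factors, with the weight $B = \xi^N_{z,z+1}/A$ for a free parameter $A>0$. The ``gradient'' piece carries the factor $\xi^N_{z,z+1}/A \cdot (\sqrt{f(\eta)}-\sqrt{f(\eta^{z,z+1})})^2$; after swapping the order of summation (for each fixed bond $z$, there are exactly $\eps N$ values of $x$ such that $z\in\{x,\ldots,x+\eps N-1\}$), the prefactor $1/(\eps N)$ cancels and the sum collapses to a multiple of $\mf D_N(f)/A$.

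The remaining ``diagonal'' piece carries the factor $A\, H(\pfrac{x}{N})^2/\xi^N_{z,z+1} \cdot (\eta(z)-\eta(z+1))^2(\sqrt{f(\eta)}+\sqrt{f(\eta^{z,z+1})})^2$, which I would control using $(\eta(z)-\eta(z+1))^2\le 1$ and $\int(\sqrt{f(\eta)}+\sqrt{f(\eta^{z,z+1})})^2\, d\nu_\alpha \le 4$ (since $f$ is a density). Swapping sums again, the fast bonds ($\xi^N_{z,z+1}=1$) contribute at most $A\sum_x H(\pfrac{x}{N})^2$, which after redistributing the prefactor $1/(\eps N)$ yields the $\eps$ piece of the bound. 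Each slow bond $z = Nb_i$ carries the extra factor $N^\beta$, and ranges over the $\eps N$ sites $x$ for which $Nb_i \in \{x,\ldots,x+\eps N -1\}$, i.e.\ an $\eps$-window of positions around $b_i$; this yields the $N^{\beta-1}\textbf{1}_{[b_i,b_i+\eps)}(\pfrac{x}{N})$ contribution. Finally, choosing $A = 1/N$ balances the two estimates and gives the announced bound.

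The only mild difficulty will be the careful bookkeeping near the slow bonds: one must verify that the $\eps N$ values of $x$ for which the bond at vertex $Nb_i$ appears in the telescopic range $\{x,\ldots,x+\eps N-1\}$ correspond, up to the identification of $Nb_i$ with $b_i$ and the $1/N$ endpoint ambiguity, to exactly the $\eps$-window appearing in the statement. Everything else is a direct transcription of the Cauchy--Schwarz/symmetrization scheme already used in Lemma~\ref{lema41} and Lemma~\ref{lemma43}.
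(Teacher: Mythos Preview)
Your proposal is correct and follows exactly the route the paper indicates: the paper omits the proof and only sketches it, saying one should write $\eta(x)-\eta(x+\eps N)$ as a telescopic sum and proceed as in Lemma~\ref{lemma43}, the sole difference being that the slow bonds cannot be avoided and therefore contribute the $N^{\beta-1}$ term. Your plan fills in precisely these details, including the correct observation that the $\eps$-window in the indicator arises from the $\eps N$ values of $x$ whose telescopic range crosses the slow bond at $Nb_i$; the endpoint ambiguity you flag is real (the natural window is $\{Nb_i-\eps N+1,\ldots,Nb_i\}$ rather than $[b_i,b_i+\eps)$), but this is harmless since in the only application of the lemma the test functions $H$ vanish in a neighborhood of each $b_i$.
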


The proof of the last lemma follows the same steps as above and for that reason will be omitted. Nevertheless, we sketch the idea of the proof. One begins by writing
$\eta(x)-\eta(x+\eps N)$ as a telescopic sum and proceeding as in Lemma \ref{lemma43}.
The only relevant difference in this case is that is not possible to avoid the slow bonds inside the telescopic sum, and
therefore the upper bound depends on $\beta$.

\subsection{Energy Estimates}

\quad
\vspace{0.2cm}

We prove in this subsection that any limit point $\bb Q_*^{\beta}$ of the
sequence $\{\bb Q_{\mu_N}^{\beta,N}:N\geq{1}\}$ is concentrated on trajectories
$\rho(t,u) du$ with finite energy, meaning that $\rho(t,u)$ belongs to some Sobolev space. For $\beta\in [0,1)$, this result is an immediate consequence of the
uniqueness of weak solutions of the heat equation. The case $\beta=1$ is a particular case of the one considered in \cite{fl}.
Therefore, we will treat here the remaining case $\beta\in(1,\infty)$. Such result will play an important role in the uniqueness
of weak solutions of \eqref{edp3}.\\

Let $\bb Q_*^{\beta}$ be a limit point of $\{\bb Q_{\mu_N}^{\beta,N}:N\geq{1}\}$ and assume without lost of generality that the whole
sequence converges weakly to $\bb Q_*^{\beta}$.

\begin{proposition}
\label{s05}
The measure $\bb Q_*^{\beta}$ is concentrated on paths $\pi(t,u)=\rho(t,u) du$. Moreover, there exists a function in $L^2([0,T]\times \bb T)$, denoted by $\partial_u \rho$, such that
\begin{equation*}
\int_0^T \int_{\bb T}  \, (\partial_u H) (s, u) \, \rho(s,u)\hspace{0.1cm}du \hspace{0.1cm}ds
\;=\; -\; \int_0^T \int_{\bb T} \,   H (s, u)\, (\partial_u \rho) (s, u)\hspace{0.1cm}du \hspace{0.1cm}ds\,,
\end{equation*}
for all $H$ in $C^{0,1}([0,T]\times \bb T)$ whose support is contained in
$[0,T]\times(\bb T\backslash \{b_1,\ldots,b_k\})$.
\end{proposition}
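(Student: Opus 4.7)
The claim splits into two pieces: absolute continuity of $\pi(t,\cdot)$, and existence of an $L^2$ weak derivative. Absolute continuity is automatic, since $\pi_t^N$ puts mass at most $1/N$ on each site $x/N$, forcing every weak limit to have density bounded by $1$. The substance is therefore the energy estimate, which I would obtain via the classical entropy--Feynman-Kac scheme followed by a Riesz representation argument.

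Let $\ell_\rho(H):=-\int_0^T\!\!\int_{\bb T}(\partial_u H)(s,u)\,\rho(s,u)\,du\,ds$ for admissible test functions $H\in C^{0,1}([0,T]\times\bb T)$ supported in $[0,T]\times(\bb T\setminus\{b_1,\ldots,b_k\})$. The target is an estimate of the form $|\ell_\rho(H)|\le C\|H\|_{L^2}$ holding $\bb Q_*^\beta$-a.s., uniformly over a countable dense family of such $H$. For a fixed admissible $H$ I would introduce its discrete counterpart
\[
J_H^N(\eta_\cdot)\;=\;\int_0^T\sum_{x\in\bb T_N}H(s,\pfrac{x}{N})\,[\eta_s(x+1)-\eta_s(x)]\,ds,
\]
which, by summation by parts and a Taylor expansion, equals $-\int_0^T\<\pi^N_s,\partial_u H(s,\cdot)\>\,ds+O(1/N)$ and hence converges to $\ell_\rho(H)$ along the weakly converging subsequence. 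The entropy inequality together with \eqref{f06}, followed by Feynman-Kac's formula, gives for every $\gamma>0$
\[
\bb E_{\mu_N}^\beta[J_H^N]\;\le\;\frac{K_0}{\gamma}+\frac{1}{\gamma N}\int_0^T\sup_f\Big\{\gamma N\sum_{x}H(s,\pfrac{x}{N})\!\int[\eta(x+1)-\eta(x)]\,f\,d\nu_\alpha-N^2\mf D_N(f)\Big\}\,ds,
\]
where the supremum runs over densities $f$ with respect to $\nu_\alpha$. Applying the identity $a-b=(\sqrt a-\sqrt b)(\sqrt a+\sqrt b)$ under the involution $\eta\mapsto\eta^{x,x+1}$, together with Cauchy-Schwarz with parameter $B=\gamma/N$, and exploiting that for $N$ large the support hypothesis on $H$ forces $\xi^N_{x,x+1}=1$ at every contributing $x$, bounds the bracketed supremum by $\gamma^2\sum_x H(s,\pfrac{x}{N})^2$. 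Consequently $\bb E_{\mu_N}^\beta[J_H^N]\le K_0/\gamma+\gamma\int_0^T\!\!\int_{\bb T}H^2\,du\,ds+o(1)$, with the analogous bound for $-J_H^N$.

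To upgrade from a single $H$ to an almost-sure statement, fix a countable family $\{H_j\}$ dense in the admissible test-function class. Applying the same entropy--Feynman-Kac argument to the functional $\max_{1\le j\le M}\{J_{H_j}^N-\gamma\|H_j\|^2_{L^2}\}$, via the elementary bound $\exp\max_j x_j\le\sum_j\exp x_j$, produces an upper bound of $K_0/\gamma$ on its expectation, uniform in $M$. Passing $N\to\infty$ along the converging subsequence, then $M\to\infty$, and finally optimizing over $\gamma$ along a rational sequence via a union bound yields $\sup_j |\ell_\rho(H_j)|\le C\|H_j\|_{L^2}$, $\bb Q_*^\beta$-a.s. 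By linearity and density, $\ell_\rho$ extends to a bounded linear functional on $L^2([0,T]\times\bb T)$ restricted to the admissible class, and Riesz representation produces $\partial_u\rho\in L^2([0,T]\times\bb T)$ realizing the required integration by parts.

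The principal obstacle is the Dirichlet form estimate. Had any slow bond entered the support of $H$, the Cauchy-Schwarz step would contribute a factor $1/\xi^N_{x,x+1}=N^\beta$, which blows up as $\beta>1$ and destroys the bound; the whole scheme succeeds precisely because $H$ vanishes in neighborhoods of each $b_i$, so that for large $N$ every bond touched by the sum has unit conductance. This is also the intrinsic reason the estimate is local and cannot extend across the slow bonds when $\beta>1$, consistent with the macroscopic discontinuity of $\rho$ at $\{b_1,\ldots,b_k\}$ predicted by Theorem \ref{t1}.
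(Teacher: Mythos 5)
Your proposal is correct and shares the paper's overall skeleton (entropy inequality, Feynman--Kac and the variational formula for the largest eigenvalue, the $\exp\max_j\le\sum_j\exp$ trick to handle a countable dense family, monotone convergence, and finally Riesz representation on $L^2([0,T]\times\bb T)$), but it takes a genuinely different and in fact shorter route through the key estimate. The paper works with the \emph{mesoscopic} gradient $V_N(\eps,H,\eta)=\pfrac{1}{\eps N}\sum_x H(\pfrac{x}{N})\{\eta(x)-\eta(x+\eps N)\}-\pfrac{2}{N}\sum_x H(\pfrac{x}{N})^2$, whose Dirichlet-form bound is Lemma \ref{lemma55}; since $\eta_s(x)$ is not a continuous functional of the empirical measure, the paper must first invoke the Replacement Lemma \ref{replace2} to pass to the block average $\eta^{\delta N}_s$, use the convolution identity \eqref{conv} to rewrite everything in terms of $\pi^N_s*\iota_\delta$, and then take the successive limits $N\to\infty$, $\delta\downarrow 0$, $\eps\downarrow 0$. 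You instead use the \emph{microscopic} gradient $\sum_x H(s,\pfrac{x}{N})[\eta_s(x+1)-\eta_s(x)]$, which after summation by parts on the torus equals $-\<\pi^N_s,\nabla_N H_s\>$ with $\nabla_N H\to\partial_u H$ uniformly; this is already a bounded continuous functional of the path, so weak convergence of $\bb Q^{\beta,N}_{\mu_N}$ suffices and the Replacement Lemma and the two auxiliary limits disappear. Your Dirichlet-form computation is sound: the involution $\eta\mapsto\eta^{x,x+1}$, the factorization $a-b=(\sqrt a-\sqrt b)(\sqrt a+\sqrt b)$ and Cauchy--Schwarz produce $N^2\mf D_N(f)$ plus a term of order $\gamma^2\sum_x H(\pfrac{x}{N})^2$ precisely because the support condition forces $\xi^N_{x,x+1}=1$ on every contributing bond for large $N$ --- the same mechanism that makes the paper's Lemma \ref{lemma55} usable there. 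The only caveats are cosmetic: the Cauchy--Schwarz parameter should be taken $x$-dependent (proportional to $|H(s,\pfrac{x}{N})|/N$) or the constants tracked more carefully, and the final upgrade from the a.s.\ finiteness of $\sup_j\{\ell_\rho(H_j)-c\|H_j\|^2_{L^2}\}$ to $|\ell_\rho(H_j)|\le C(\pi)\|H_j\|_{L^2}$ is most cleanly done by the homogeneity trick $H\mapsto\lambda H$ rather than a union bound over rational $\gamma$, which is also how the paper passes from Lemma \ref{s03} to the boundedness of $\ell$. What the paper's heavier route buys is robustness: the block-average formulation extends verbatim to nonlinear functionals of $\rho$ and reuses machinery (the Replacement Lemma) already needed elsewhere; what yours buys is economy, by exploiting that the energy functional here is linear in $\rho$.
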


The previous result follows from the next lemma.  Recall the
definition of the constant $K_0$ given in \eqref{f06}.

\begin{lemma}
\label{s03}
\begin{equation*}
 \begin{split}
E_{\bb Q_*^{\beta}} \Big[ \sup_H \Big\{ \int_0^T \, \int_{\bb T}
 \, (\partial_u H) (s, u) \, &\rho(s,u)\hspace{0.1cm}du\hspace{0.1cm}ds\\
&- \; 2 \int_0^T\,\int_{\bb T}\, \Big(H (s, u)\Big)^2\hspace{0.1cm}du\hspace{0.1cm}ds\Big\} \Big] \; \le \; K_0 \;,
 \end{split}
\end{equation*}
where the supremum is carried over all functions $H$ in
$C^{0,1}([0,T]\times \bb T)$ with support contained in
$[0,T]\times (\bb T\backslash \{b_1,\ldots,b_k\})$.
\end{lemma}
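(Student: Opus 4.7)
The plan is to transfer the estimate to the prelimit via the weak convergence $\bb Q^{\beta,N}_{\mu_N} \Rightarrow \bb Q^\beta_*$ and then bound the prelimit quantity by combining the entropy inequality with Feynman--Kac. Since a supremum over the uncountable family of test functions is in general not measurable, I would first fix a countable subfamily $\{H_\ell\}_{\ell \geq 1} \subset C^{0,1}([0,T]\times \bb T)$, dense in the $C^{0,1}$-topology among functions with support in $[0,T] \times (\bb T \setminus \{b_1, \ldots, b_k\})$. By monotone convergence it then suffices to prove, for every $m \ge 1$,
\begin{equation*}
E_{\bb Q^\beta_*}\Big[\max_{1 \le \ell \le m} \Phi(H_\ell, \pi)\Big] \;\le\; K_0,
\end{equation*}
where $\Phi(H, \pi)$ abbreviates the functional inside the supremum. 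Each $\Phi(H_\ell, \cdot)$ is continuous on $D([0,T], \mc M)$ and the maximum is uniformly bounded below (by $-2T\max_\ell \|H_\ell\|_\infty^2$), so Portmanteau's theorem reduces the task to establishing the same bound with $\bb Q^\beta_*$ replaced by $\bb Q^{\beta,N}_{\mu_N}$, up to a $\liminf$ as $N \to \infty$.

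At the discrete level, I would approximate
\begin{equation*}
\int_0^T\!\! \int_{\bb T}(\partial_u H_\ell)\rho(s, u)\,du\,ds \;\approx\; \frac{1}{N}\! \sum_{x \in \bb T_N} \int_0^T (\partial_u H_\ell)(s, x/N)\,\eta_s(x)\,ds
\end{equation*}
with error $o(1)$, and then use $\partial_u H_\ell(s, x/N) = N[H_\ell(s, (x+1)/N) - H_\ell(s, x/N)] + O(N^{-1})$ together with a discrete summation by parts to rewrite the functional as $\int_0^T V_\ell(s, \eta_s)\,ds$, where
\begin{equation*}
V_\ell(s, \eta) \;=\; -\!\sum_{x \in \bb T_N}\!\! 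H_\ell(s, (x{+}1)/N)\,[\eta(x{+}1) - \eta(x)] \;-\; \frac{2}{N}\sum_{x \in \bb T_N} H_\ell(s, x/N)^2.
\end{equation*}
Using $e^{\max_\ell a_\ell} \le \sum_\ell e^{a_\ell}$, the entropy inequality with $H_N(\mu_N | \nu_\alpha) \le K_0 N$, and the Feynman--Kac bound of Lemma A1.7.2 of \cite{kl}, the result follows provided
\begin{equation*}
\sup_{f \text{ density}} \Big\{\int V_\ell(s, \cdot)\, f\, d\nu_\alpha \;-\; N\,\mf D_N(f)\Big\} \;\le\; o(1)
\end{equation*}
uniformly in $s \in [0,T]$ and in $\ell \le m$.

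To bound this variational problem, I would exploit the $\nu_\alpha$-reversibility under each exchange $\sigma^{x,x+1}$ to rewrite $\int[\eta(x+1) - \eta(x)]f\,d\nu_\alpha = \int \eta(x+1)[f - f\circ \sigma^{x,x+1}]\,d\nu_\alpha$, factor $f - f\circ\sigma = (\sqrt f - \sqrt{f\circ\sigma})(\sqrt f + \sqrt{f\circ\sigma})$, and apply Young's inequality bond by bond with parameters $\alpha_x$ chosen so that the squared-difference contributions combine into exactly $N\,\mf D_N(f)$ while the conjugate contributions produce $(2/N)\sum_x H_\ell(s, (x{+}1)/N)^2$, which cancels the quadratic term in $V_\ell$ up to an index-shift error $o(1)$. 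The main obstacle is that at a slow bond one would be forced to take $\alpha_x \le N\,\xi^N_{x, x+1}/2 = N^{1-\beta}/2$, producing a reciprocal of order $N^\beta$ that blows up the estimate. This is exactly what the support assumption on $H$ rules out: since each $H_\ell$ vanishes on a neighborhood of $\{b_1, \ldots, b_k\}$, every bond contributing nontrivially satisfies $\xi^N_{x, x+1} = 1$, and the choice $\alpha_x = N/2$ delivers the desired cancellation.
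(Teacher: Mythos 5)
Your proposal is correct, and it reaches the estimate by a route that is genuinely more direct than the paper's in one structural respect. The core mechanism is the same in both arguments: reduce to a finite maximum over a countable dense family and conclude by monotone convergence; transfer to the prelimit by weak convergence; apply the entropy inequality with $H_N(\mu_N|\nu_\alpha)\le K_0N$, the bound $e^{\max_\ell a_\ell}\le\sum_\ell e^{a_\ell}$, Feynman--Kac and the variational formula; and finally control the resulting variational problem by the change of variables $\eta\mapsto\eta^{x,x+1}$ and a Young-type inequality calibrated so that the squared-gradient terms are absorbed into $N\,\mf D_N(f)$, with the support condition on $H$ guaranteeing that no slow bond (where the calibration would cost a factor $N^{\beta}$) ever contributes. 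Where you diverge is in how $\partial_u H$ enters: you perform a single microscopic summation by parts, turning $\<\pi^N_s,\partial_u H\>$ into nearest-neighbour differences $\eta(x+1)-\eta(x)$ weighted by $H$, and your bond-by-bond Young inequality with parameter of order $1/N$ produces exactly the compensating term $\tfrac{2}{N}\sum_x H^2$ up to an index shift. The paper instead works with the mesoscopic quantity $V_N(\eps,H,\eta^{\delta N})$ built from differences $\eta(x)-\eta(x+\eps N)$ of block averages, which forces it to invoke the Replacement Lemma (Lemma \ref{replace2}) and the auxiliary estimate of Lemma \ref{lemma55}, and to recover $\partial_u H$ only after the successive limits $N\to\infty$, $\delta\to 0$, $\eps\to 0$ under $\bb Q_*^\beta$. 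Your version avoids that machinery entirely and needs only a one-step Portmanteau argument (legitimate, since $\pi\mapsto\int_0^T\<\pi_s,\partial_u H_s\>\,ds$ is bounded and continuous in the Skorohod topology, as in Proposition \ref{A3}); the paper's version has the advantage of reusing lemmas already established for the characterization of limit points and of keeping every prelimit quantity expressed through the block-averaged empirical density. Both yield the constant $2$ in front of $\int\int H^2$ for the same reason, and I see no gap in your argument.
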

We start by showing Proposition \ref{s05} assuming the last result. Later and independently we will prove the previous lemma.

\begin{proof}[Proof of Proposition \ref{s05}]
Denote by $\ell : C^{0,1}([0,T]\times \bb T) \to \bb R$ the linear
functional defined by
\begin{equation*}
\ell (H) \;=\; \int_0^T \, \int_{\bb T} \, (\partial_u H) (s, u) \, \rho(s,u)\hspace{0.1cm}du\hspace{0.1cm}ds.
\end{equation*}
Since the set of functions $H\in C^{0,1}([0,T]\times \bb T)$  with support
contained in $[0,T]\times (\bb T\backslash \{b_1,\ldots,b_k\})$
is dense in $L^2([0,T]\times \bb T)$ and since by Lemma \ref{s03}, $\ell$ is a $\bb Q_*^{\beta}$-a.s. bounded functional in $C^{0,1}([0,T]\times \bb T)$, we can extend it to a $\bb Q_*^{\beta}$-a.s. bounded functional in $L^2([0,T]\times \bb T)$. In particular, by the Riesz Representation
Theorem, there exists a function $G$ in $L^2([0,T]\times \bb T)$
such that
\begin{equation*}
\ell (H) \;=\; - \int_0^T \, \int_{\bb T} \, H (s, u) \, G(s,u)\hspace{0.1cm}du\hspace{0.1cm}ds\;.
\end{equation*}
This finishes the proof.
\end{proof}

For a smooth function $H\colon \bb T\to \bb R$, $\varepsilon
>0$ and a positive integer $N$, define $V_N(\varepsilon, H,\eta)$ by
\begin{eqnarray*}
V_N(\varepsilon, H , \eta ) &=&
\pfrac 1{\varepsilon N} \sum_{x\in\bb T_N} H(\pfrac{x}{N})
\{ \eta(x) -  \eta(x + \varepsilon N)\} -  \pfrac {2}{ N} \sum_{x\in\bb T_N}\Big(H(\pfrac{x}{N})\Big)^2 \; .
\end{eqnarray*}
In order to prove the Lemma \ref{s03}, we need the following technical result:

\begin{lemma}
Consider  $H_1,\ldots,H_k$  functions in
$C^{0,1}([0,T]\times \bb T)$ with support contained in $[0,T]\times (\bb T\backslash\{b_1,\ldots,b_k\})$.
  Hence, for every
$\varepsilon >0$:
\begin{equation}\label{limsup15}
\varlimsup_{\delta\to 0} \varlimsup_{N\to\infty}
\bb E_{\mu^N}^\beta \Big[ \max_{1\le i\le k} \Big\{
\int_0^T V_N(\varepsilon, H_i (s, \cdot) , \eta_s^{\delta N} ) \,
ds \Big\} \Big] \;\le\; K_0\; .
\end{equation}
\end{lemma}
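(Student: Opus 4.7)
The plan is to combine the entropy inequality, the Feynman--Kac formula, and Lemma~\ref{lemma55} applied to all translates of the $H_i$ by amounts $y/N$ with $y\le\delta N$.

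First, I would apply the entropy inequality at scale $N$ together with $H_N(\mu_N\vert\nu_\alpha)\le K_0 N$ from Proposition~\ref{K0} to obtain
\begin{equation*}
\bb E_{\mu_N}^\beta\Big[\max_{1\le i\le k}\int_0^T V_N(\varepsilon, H_i(s,\cdot), \eta_s^{\delta N})\, ds\Big] \;\le\; K_0 + \pfrac{1}{N}\log \bb E_{\nu_\alpha}\Big[\exp\Big(N\max_{1\le i\le k}\int_0^T V_N(\varepsilon, H_i(s,\cdot), \eta_s^{\delta N})\, ds\Big)\Big].
\end{equation*}
Using the elementary inequality $e^{\max_i a_i}\le\sum_i e^{a_i}$ together with \eqref{log bounds} reduces the task to showing that, for each fixed $i\in\{1,\ldots,k\}$,
\begin{equation*}
\varlimsup_{\delta\to 0}\varlimsup_{N\to\infty}\pfrac{1}{N}\log \bb E_{\nu_\alpha}\Big[\exp\Big(N\int_0^T V_N(\varepsilon, H_i(s,\cdot), \eta_s^{\delta N})\, ds\Big)\Big] \;\le\; 0.
\end{equation*}

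Next, the Feynman--Kac formula for the speeded generator $N^2\mc L_N$ bounds the preceding log-Laplace by
\begin{equation*}
\int_0^T \sup_f \Big\{\int V_N(\varepsilon, H_i(s,\cdot), \eta^{\delta N})\, f(\eta)\, \nu_\alpha(d\eta) \;-\; N\mf D_N(f)\Big\}\, ds,
\end{equation*}
where the supremum runs over probability densities $f$ with respect to $\nu_\alpha$. It therefore suffices to check that the integrand is non-positive, uniformly in $s$ and in $f$, once $N\to\infty$ and $\delta\to 0$.

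For this last step I would expand
\begin{equation*}
\eta^{\delta N}(x)-\eta^{\delta N}(x+\varepsilon N) \;=\; \pfrac{1}{\delta N}\sum_{y=1}^{\delta N}\bigl[\eta(x+y)-\eta(x+\varepsilon N+y)\bigr],
\end{equation*}
and, after the change of summation index $x\mapsto x-y$ in $\bb T_N$, rewrite the first term of $V_N(\varepsilon,H_i(s,\cdot),\eta^{\delta N})$ as the average over $y\in\{1,\ldots,\delta N\}$ of
\begin{equation*}
\pfrac{1}{\varepsilon N}\sum_{x\in\bb T_N} H_i\bigl(s,(x-y)/N\bigr)\,\bigl[\eta(x)-\eta(x+\varepsilon N)\bigr].
\end{equation*}
Since the supports of $H_1,\ldots,H_k$ are closed subsets of $[0,T]\times(\bb T\setminus\{b_1,\ldots,b_k\})$, there exists $\theta>0$ such that $H_i(s,u)=0$ whenever $\min_j|u-b_j|\le\theta$, uniformly in $s$ and $i$. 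Taking $\varepsilon,\delta<\theta$ then forces $H_i(s,(x-y)/N)\,\mathbf{1}_{[b_j,b_j+\varepsilon)}(x/N)\equiv 0$ for all $x\in\bb T_N$, $y\in\{1,\ldots,\delta N\}$ and $j\in\{1,\ldots,k\}$. Applying Lemma~\ref{lemma55} to each shifted function $H_i(s,\cdot-y/N)$, using the translation invariance $\pfrac{1}{N}\sum_x H_i(s,(x-y)/N)^2=\pfrac{1}{N}\sum_x H_i(s,x/N)^2$, and averaging over $y$ yields
\begin{equation*}
\int V_N(\varepsilon,H_i(s,\cdot),\eta^{\delta N})\, f\, d\nu_\alpha \;-\; N\mf D_N(f) \;\le\; 0,
\end{equation*}
so that the supremum in the Feynman--Kac bound is non-positive, finishing the argument.

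The main obstacle is the factor $N^{\beta-1}$ in Lemma~\ref{lemma55}, which diverges for $\beta>1$. Neutralising it requires exploiting the support hypothesis on each $H_i$ simultaneously for the function itself and for every one of its $y/N$-translates produced by the expansion of $\eta^{\delta N}$; this is exactly what the choice $\varepsilon,\delta<\theta$, combined with the order of limits (first $N\to\infty$, then $\delta\to 0$, with $\varepsilon$ fixed smaller than the distance from the supports of the $H_i$ to the slow bonds), makes possible.
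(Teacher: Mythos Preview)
Your argument is correct but follows a different route from the paper. The paper's proof first invokes the Replacement Lemma (Lemma~\ref{replace2}) to replace $\eta_s^{\delta N}$ by $\eta_s$ inside the expectation, reducing the problem to bounding $\bb E_{\mu^N}^\beta[\max_i\int_0^T V_N(\varepsilon,H_i(s,\cdot),\eta_s)\,ds]$; only then does it apply the entropy inequality, Feynman--Kac, and Lemma~\ref{lemma55} to the un-averaged configuration. You instead keep $\eta_s^{\delta N}$ through the entropy and Feynman--Kac steps, and then inside the static variational problem expand $\eta^{\delta N}(x)-\eta^{\delta N}(x+\varepsilon N)$ as an average over shifts $y\in\{1,\ldots,\delta N\}$, applying Lemma~\ref{lemma55} to each shifted test function $H_i(s,\cdot-y/N)$ and exploiting that the support hypothesis survives all these shifts when $\varepsilon,\delta<\theta$. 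The paper's route is more modular, since it recycles Lemma~\ref{replace2} as a black box; your route is more self-contained, bypassing the Replacement Lemma entirely at the cost of tracking the support condition through the family of translates. Both proofs implicitly require $\varepsilon$ to be smaller than the distance $\theta$ from the supports of the $H_i$ to the slow bonds in order to kill the $N^{\beta-1}$ term from Lemma~\ref{lemma55}; as you note, this is harmless since the lemma is only used in the regime $\varepsilon\downarrow 0$.
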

\begin{proof}
It follows from Lemma \ref{replace2} that in order to prove
\eqref{limsup15}, we just need to show that
\begin{equation*}
\varlimsup_{N\to\infty} \bb E_{\mu^N}^\beta \Big[ \max_{1\le i\le k} \Big\{
\int_0^T V_N(\varepsilon,  H_i (s, \cdot) , \eta_s ) \, ds \Big\}
\Big] \;\le\; K_0\;.
\end{equation*}
By the entropy and the Jensen's inequality,
for each fixed $N$, the previous expectation is less than or equal to
\begin{equation*}
\frac {H(\mu^N \vert \nu_{\alpha})}{ N} \; +\; \frac 1{N}
\log \bb E_{\nu_{\alpha}} \Big[ \exp\Big\{
\max_{1\le i\le k}  N \int_0^T \,
V_N(\varepsilon,  H_i (s, \cdot) , \eta_s )  ds\Big\} \Big] \; .
\end{equation*}
By \eqref{f06}, the first term above is bounded by $K_0$.  Since $\exp\{
\max_{1\le j\le k} a_j \}$ is bounded from above by $\sum_{1\le j\le k}
\exp\{a_j\}$ and by \eqref{log bounds}, the limit as $N\uparrow\infty$, of the
second term of the previous expression is less than or equal to
\begin{equation*}
\max_{1\le i \le k} \varlimsup_{N\to\infty} \frac 1{N} \log
\bb E_{\nu_{\alpha}} \Big[ \exp
\Big\{ N  \int_0^T \,  V_N(\varepsilon,  H_i (s, \cdot) , \eta_s)ds
\Big\} \Big] \; .
\end{equation*}
We now prove that, for each fixed $i$ the limit above is nonpositive.

Fix $1\le i\le k$.
By the Feynman-Kac's formula and the variational formula for the
largest eigenvalue of a symmetric operator, for each fixed
$N$, the previous expectation is bounded from above by
\begin{equation*}
\int_0^T \, \sup_{f} \Big\{  \int V_N(\varepsilon,
H_i (s, \cdot) , \eta )
f(\eta) \nu_{\alpha} (d\eta) - N \mf D_N (f) \Big\}\; ds.
\end{equation*}
In last formula the supremum is taken over all probability densities
$f$ with respect to $\nu_{\alpha}$.  By assumption, each of the functions $\{H_i:i=1,\ldots,k\}$ vanishes in a neighborhood of each $b_i\in \bb T$.
This together with Lemma \ref{lemma55},
imply that the previous expression has nonpositive $\textrm{limsup}$. This is enough to conclude.
\end{proof}

We define now an approximation of the identity in the continuous torus given by
\begin{equation}\label{iota}
\iota_\eps(u,v)=\left\{\begin{array}{ll}
\pfrac{1}{\eps}\,\textbf 1_{(v,v+\eps)}(u)\,, &  \mbox{~if}\,\,\,\,v\in \bb T\backslash \displaystyle\cup_{i=1}^k(b_i-\eps, b_i)\,,\\
\quad\\
\pfrac{1}{\eps}\,\textbf 1_{(b_1-\eps,b_1)}(u)\,, &  \mbox{~if}\,\,\,\,v\in (b_1-\eps, b_1)\,,\\
\qquad\quad \vdots& \qquad\qquad \vdots\\
\pfrac{1}{\eps}\,\textbf 1_{(b_k-\eps,b_k)}(u)\,, &  \mbox{~if}\,\,\,\,v\in (b_k-\eps, b_k)\,.\\
\end{array}
\right.
\end{equation}\\
The convolution of a measure $\pi$ with $\iota_\eps$ is defined by
\begin{equation*}
 (\pi*\iota_\eps)(v)\;=\;\int\iota_\eps(u,v)\,\pi(du)\,.
\end{equation*}
 For a function $\rho$, the convolution $\rho*\iota_\eps$ is understood
as the convolution of the measure $\rho(u)\,du$ with $\iota_\eps$. Recall Definition \ref{def5}.
At this point, an important remark is the equality
\begin{equation}\label{conv}
\eta^{\eps N}_t(x)=(\pi^N_t*\iota_\eps)(\pfrac{x}{N})\,,
\end{equation}
which is of straightforward verification.

\begin{proof}[Proof of Lemma \ref{s03}]

 Consider a sequence $\{H_i:\, i\ge 1\}$ dense (with respect to the norm $\|H\|_{\infty}+\|\partial_uH\|_{\infty}$) in the subset of $C^{0,1}([0,T]\times \bb T)$
of functions with support contained in $[0,T]\times (\bb T\backslash\{b_1,\ldots,b_k\})$.

Recall that we suppose that $\{\bb Q_{\mu_N}^{\beta,N}:N\geq{1}\}$ converges
to $\bb Q_*^{\beta}$.
By \eqref{limsup15} and \eqref{conv}, for every $k\ge 1$,
\begin{equation*}
 \begin{split}
\varlimsup_{\delta\to 0} E_{\bb Q_*^{\beta}}\Big[ \max_{1\le i\le k}
\Big\{ \frac 1{\varepsilon} \int_0^T \, \int_{\bb T}  \,
H_i (s,u) \, &\Big\{ \rho^\delta_s (u) -
\rho^\delta_s (u + \varepsilon) \Big\}\hspace{0.1cm}du\hspace{0.1cm}ds \\
& -\; 2 \int_0^T\,
\int_{\bb T} \, (H_i(s,u))^2
\hspace{0.1cm}du\hspace{0.1cm}ds\Big\} \Big]\; \le \; K_0\; ,
 \end{split}
\end{equation*}
where $\rho^\delta_s (u) = (\rho_s * \iota_\delta)(u)$ as defined above.
Letting $\delta\downarrow 0$, performing a change of variables and then letting
$\varepsilon\downarrow 0$, we obtain that
\begin{eqnarray*}
\!\!\!\!\!\!\!\!\!\!\!\!\! &&
E_{\bb Q_*^{\beta}}\Big[ \max_{1\le i\le k} \Big\{
\int_0^T \, \int_{\bb T} (\partial_u H_i) (s,u)
\rho (s,u) \,\hspace{0.1cm}du\hspace{0.1cm}ds \\
\!\!\!\!\!\!\!\!\!\!\!\!\! && \qquad\qquad\qquad\qquad\qquad\qquad
- \;2 \int_0^T\, \int_{\bb T} (H_i(s,u))^2 \,\hspace{0.1cm}du\hspace{0.1cm}ds
\Big\} \Big] \;\le \; K_0\; .
\end{eqnarray*}
To conclude the proof it remains to apply the Monotone Convergence
Theorem and recall that $\{H_i: \, i\ge 1\}$ is a dense sequence (with respect to the norm $\|H\|_{\infty}+\|\partial_uH\|_{\infty}$)
in the subset of functions of $C^{0,1}([0, T]\times \bb T)$ with
support contained in $[0,T]\times(\bb T\backslash\{b_1\ldots,b_k\})$.
\end{proof}
\begin{remark}
In terms of Sobolev spaces, we have just proved that, for $\beta\in(1,\infty)$, $Q_*^\beta$-almost surely,
the limit trajectory $\rho(t,u)du$ is such that $\rho(t,u)$ belongs to
$L^1(0,T; \mc H^1(b_i,b_{i+1}))$, in each cylinder $[0,T]\times(b_i,b_{i+1})$. Notice that in view of the presence of slow bonds and
 of Lemma \ref{lemma55} is it not possible to obtain the same result considering the whole space
$L^1(0,T; \mc H^1(\bb T))$.
\end{remark}

\section{Characterization of Limit Points}\label{s6}

We prove in this section that all limit points $\bb Q_*^\beta$ of the
sequence $\{\bb Q^{\beta,N}_{\mu_N}: N\geq{1}\}$ are concentrated on trajectories of measures absolutely
continuous with respect to the Lebesgue measure: $\pi(t,du) = \rho(t,u) du$, whose density
$\rho(t,u)$ is a weak solution of the hydrodynamic equation
\eqref{edp1}, \eqref{edp2} or \eqref{edp3}, for each corresponding value of $\beta$.

Let $\bb Q_*^\beta$ be a limit point of the sequence $\{\bb Q^{\beta,N}_{\mu_N}: N\geq{1}\}$
and assume, without lost of generality, that $\{\bb Q^{\beta,N}_{\mu_N}: N\geq{1}\}$
converges to $\bb Q_*^\beta$. The existence of $\bb{Q}^\beta_*$ is guaranteed by Proposition \ref{s06}.

Since there is at most one particle per site, it is easy to show that $\bb
Q_*^\beta$ is concentrated on trajectories $\pi_t(du)$ which are absolutely
continuous with respect to the Lebesgue measure, $\pi_t(du) =
\rho(t,u) du$ and whose density $\rho(\cdot)t,\cdot$ is non-negative and bounded by
1 (for more details see \cite{kl}). We distinguish the regime of $\beta$ in different subsections below. In all the cases, we will make use of the martingale $M^{N}_t(H)$ defined in \eqref{M}. By a simple change of variables, the integral term in \eqref{M} can be rewritten as a function of the empirical measure, such that:
\begin{equation}\label{martingale2}
M^{N}_t(H)\;=\;\<\pi^N_t, H\> \,-\, \<\pi^N_0, H \> \,-\,
\int_0^t  \, \<\pi^N_s ,N^2\,\bb L_N H \> \,ds\,,
\end{equation}
 where $\bb{L}_N$ was defined in \eqref{generator random walk}.

 We notice here that, for any choice of $H$, $M^{N}_t(H)$ is a martingale. In due course we impose extra conditions on $H$ in order to identify the density $\rho(t,\cdot)$ as a weak solution of the corresponding weak equation depending on the regime of the parameter $\beta$.

\subsection{Characterization of Limit Points for $\beta\in [0,1)$}

\quad
\vspace{0.2cm}

Here, we want to show that $\rho(t,\cdot)$ is a weak solution of \eqref{edp1}.
Let $H\in C^2(\bb T)$. We begin by claiming  that
\begin{equation}\label{Q*1}
 \bb Q_{*}^\beta \Big[\,\pi_\cdot:\,
  \<\pi_t, H \> \,-\,
 \<\pi_0,H \> \,-\,
 \int_0^t  \,  \<\pi_s , \partial_u^2 H \> \,ds \,=\,0,\,\forall t\in[0,T]\, \Big]\;=\;1.
  \end{equation}
In order to prove the last claim, it is enough to show that, for every $\delta >0$:
\begin{equation*}
\bb Q_{*}^\beta \Big[\,\pi_{\cdot}: \, \sup_{0\le t\le T}
\Big\vert \<\pi_t, H \> \,-\,
\<\pi_0,H \> \,-\,
\int_0^t  \,  \<\pi_s , \partial_u^2 H \> \,ds\, \Big\vert
 \, > \, \delta\, \Big]\,=0.
 \end{equation*}
By Portmanteau's Theorem and Proposition \ref{A3}, last probability is bounded from above by
 \begin{equation*}
 \varliminf_{N\to\infty} \bb Q^{\beta,N}_{\mu_N} \Big[\,\pi_\cdot: \, \sup_{0\le t\le T}
\Big\vert \<\pi_t, H \> \,-\,
\<\pi_0,H \> \,-\,
\int_0^t  \,  \<\pi_s ,\partial_u^2 H \> \,ds \,\Big\vert
 \, > \, \delta\,\Big]\,
\end{equation*}
since the supremum above is a continuous function in the Skorohod metric.
Adding and subtracting $\<\pi^N_s,N^2\,\bb L_N H\>$ in the integral term above and recalling the definition of $\bb Q^{\beta,N}_{\mu_N}$,
the previous expression is bounded from above by
\begin{equation*}
\begin{split}
&\varlimsup_{N\to\infty} \bb P^\beta_{\mu_N} \Big[ \, \sup_{0\le t\le T}
\Big\vert \<\pi_t^N, H \> \,-\,
\<\pi_0^N,H \> \,-\,
\int_0^t  \,  \<\pi_s^N , N^2\,\bb L_N H \> \,ds\, \Big\vert
 \, > \, \delta/2\, \Big]\\
 +& \varlimsup_{N\to\infty} \bb P^\beta_{\mu_N} \Big[ \, \sup_{0\le t\le T} \Big\vert \int_0^t
 \<\pi_s^N,\partial_u^2 H-N^2\,\bb L_N H\> \,ds \,\Big\vert
 \, > \, \delta/2 \,\Big]\,.
\end{split}
\end{equation*}
By \eqref{martingale2} and \eqref{limprob}, the first term in last expression is null.
By the definition of $\Gamma_N$ given in Section \ref{s4} and since there is only one particle per site, the second term in last expression
 becomes bounded by
\begin{equation*}
\begin{split}
 &\varlimsup_{N\to\infty} \bb P^{\beta}_{\mu_N} \Big[ \,
\pfrac{T}{N}\sum_{x\notin \Gamma_N}\Big\vert \partial_u^2 H\Big(\frac{x}{N}\Big)- N^2\,\bb L_N H\Big(\frac{x}{N}\Big)\Big\vert
 \, > \, \delta/4 \,\Big]\\
 +&\,\varlimsup_{N\to\infty} \bb P^{\beta}_{\mu_N} \Big[ \,
 \sup_{0\le t\le T} \Big\vert\int_0^t  \pfrac{1}{N}\sum_{x\in \Gamma_N}
\Big\{\partial_u^2 H(\pfrac{x}{N})-N^2\,\bb L_N H(\pfrac{x}{N})\Big\}\,\eta_s(x)\,ds\, \Big\vert
  \, >\,\delta/4 \,\Big]\,.
\end{split}
\end{equation*}
Outside $\Gamma_N$, the operator $N^2\,\bb L_N$ coincides with the discrete Laplacian and since $H\in C^2(\bb T)$, the first term in last expression is zero.
Recall that there are $2k$ elements in $\Gamma_N$. Applying the triangular inequality, the second expression in the previous sum
 becomes bounded by
\begin{equation*}
\begin{split}
 &\varlimsup_{N\to\infty} \bb P^{\beta}_{\mu_N} \Big[ \,
\pfrac{2kT}{N}\|\partial_u^2H\|_{\infty}\,>\, \delta/8 \,\Big]\\
 +\,&\varlimsup_{N\to\infty} \bb P^{\beta}_{\mu_N} \Big[ \, \sup_{0\le t\le T}
\Big\vert\sum_{x\in \Gamma_N} \int_0^t
 N\,\bb L_N H(\pfrac{x}{N}) \,\eta_s(x)\,ds\,\Big\vert
 \, > \, \delta/8 \,\Big]\,.
\end{split}
\end{equation*}
For large $N$, the first probability vanishes.
Now we deal with the second term. We associate to each slow bond containing a point $b_i$, a unique pair of sites in $\Gamma_N$, namely $Nb_i$ and $Nb_i +1$.
By the triangular inequality, in order to show that the second expression above is zero, it is sufficient
to verify that
\begin{equation*}
\begin{split}
\varlimsup_{N\to\infty} \bb P^{\beta}_{\mu_N} \Big[ \,\sup_{0\le t\le T}\Big\vert
 \int_0^t   \{&N\,\bb L_N H(\pfrac{Nb_i}{N}) \,\eta_s(Nb_i)\\
 +&N\,\bb L_N H(\pfrac{Nb_i+1}{N}) \,\eta_s(Nb_i+1)\}\,ds\,\Big\vert
 \, > \, \delta/8k \,\Big]\,=0,
\end{split}
\end{equation*}
for each $i=1,\ldots,k$. The expression inside the integral above can be explicitly written  as
\begin{equation*}
\begin{split}
 &\Big\{N\,[H(\pfrac{Nb_i-1}{N})-H(\pfrac{Nb_i}{N})]
+N^{1-\beta}\,[H(\pfrac{Nb_i+1}{N})-H(\pfrac{Nb_i}{N})]\Big\}\,\eta_s(Nb_i)\\
+\,&\Big\{N^{1-\beta}\,[H(\pfrac{Nb_i}{N})-H(\pfrac{Nb_i+1}{N})]
+N\,[H(\pfrac{Nb_i+2}{N})-H(\pfrac{Nb_i+1}{N})]\Big\}\,\eta_s(Nb_i+1)\,.
\end{split}
\end{equation*}
Since $H$ is smooth and $\beta\in[0,1)$, the terms inside the parenthesis involving $N^{1-\beta}$ converge to zero
and the terms involving
$N$ converge to plus or minus the space derivative of $H$ at $b_i$. Therefore, again by the triangular inequality, it remains to show that, for any $\delta>0$,
\begin{equation}\label{expression2}
\varlimsup_{N\to\infty} \bb P^{\beta}_{\mu_N} \Big[ \,\sup_{0\le t\le T}\Big\vert
 \int_0^t
\partial_u H(b_i)\Big\{\eta_s(Nb_i)
 \,-\,\eta_s(Nb_i+1)\Big\}\,ds\,\Big\vert
 \, > \, \delta \,\Big]\,
\end{equation}
equals to zero.
The integral inside the probability above is continuous as a function of the time $t$. Moreover, it has a Lipschitz constant
 bounded by $\vert \partial_u H(b_i)\vert$.
If $\partial_u H(b_i)=0$, then there is nothing to do. Otherwise, let $t_0=0<t_1<\cdots<t_n=T$ be a partition of $[0,T]$ with mesh bounded by
$\delta (\vert 2 \partial_u H(b_i)\vert)^{-1}$. Notice the partition is fixed, depending only on the function $H$. By the triangular inequality, \eqref{expression2} is bounded by
\begin{equation*}
\sum_{j=0}^n\varlimsup_{N\to\infty} \bb P^{\beta}_{\mu_N} \Big[ \,\Big\vert
 \int_0^{t_j} \partial_u H(b_i)\Big\{\eta_s(Nb_i)  \,-\,\eta_s(Nb_i+1)\Big\}\,ds\,\Big\vert
 \, > \, \delta/2 \,\Big]\,.
\end{equation*}
Therefore, we just need to prove that, for any $\delta>0$ and any $t\in[0,T]$
\begin{equation*}
\varlimsup_{N\to\infty} \bb P^{\beta}_{\mu_N} \Big[ \,\Big\vert
 \int_0^{t}
\Big\{\eta_s(Nb_i)
 \,-\,\eta_s(Nb_i+1)\Big\}\,ds\,\Big\vert
 \, > \, \delta \,\Big]\,=0.
\end{equation*}
Applying Markov's inequality, we bound the previous probability by
\begin{equation*}
   \delta^{-1}\; \bb E^{\beta}_{\mu_N}   \Big[\,
\Big\vert \int_0^{t} \Big\{\eta_s(Nb_i)  \,-\,\eta_s(Nb_i+1)\Big\}\,ds\,\Big\vert\,
\Big]\,.
\end{equation*}
 Now, in order to conclude it is enough to do the following. First add and subtract the empirical mean in the box of size $\eps N$ around $Nb_i$ and $Nb_{i}+1$. Then, by the triangular inequality and since
 $|\eta_s^{\eps N}(x)-\eta_s^{\eps N}(x+1)|\leq \pfrac{2}{\eps N}$, the term involving the two empirical means vanish. For the other two terms, we invoke Lemma \ref{replace1}. This finishes the claim.
\begin{proposition} \label{p61} For $\beta\in[0,1)$, any limit point of $\bb Q^{\beta,N}_{\mu_N}$ is concentrated in
 absolutely continuous paths
$\pi_t(du) = \rho(t,u)\, du$, with positive density $\rho(t,\cdot)$ bounded by $1$, such that $\rho(t,\cdot)$ is a
weak solution of \eqref{edp1}.
\end{proposition}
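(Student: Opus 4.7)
The plan is to assemble the proposition directly from the ingredients already in place. The first ingredient, that $\bb Q_*^\beta$ is concentrated on absolutely continuous trajectories $\pi_t(du)=\rho(t,u)\,du$ with $0\le\rho\le 1$, is the standard consequence of the one-particle-per-site restriction, as noted in the paragraph preceding this proposition; I would simply cite this as coming from \cite{kl}. The nontrivial content is therefore to show that the density $\rho$ satisfies the weak formulation of \eqref{edp1}.

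For this, observe that the long computation culminating in \eqref{Q*1} has already established, for every fixed $H\in C^2(\bb T)$, that
\begin{equation*}
\bb Q_*^\beta\Bigl[\pi_\cdot:\;\langle\pi_t,H\rangle-\langle\pi_0,H\rangle-\int_0^t\langle\pi_s,\partial_u^2 H\rangle\,ds=0,\;\forall t\in[0,T]\Bigr]=1.
\end{equation*}
So the first step is to upgrade this from ``fixed $H$'' to ``all $H$ simultaneously''. I would fix a countable dense subset $\{H_i\}_{i\ge 1}$ of $C^2(\bb T)$ (in the $\|\cdot\|_\infty+\|\partial_u\cdot\|_\infty+\|\partial_u^2\cdot\|_\infty$ norm) and take the countable intersection of the full-measure events given by \eqref{Q*1} applied to each $H_i$. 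On this intersection, an $\eps/3$ approximation argument, using that $\rho$ is bounded by $1$ so $|\langle\pi_s,\partial_u^2H-\partial_u^2H_i\rangle|\le\|\partial_u^2H-\partial_u^2H_i\|_\infty$, extends the identity to every $H\in C^2(\bb T)$.

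The second step is the initial condition $\rho(0,\cdot)=\gamma(\cdot)$. This is immediate from Definition \ref{def associated measures}: the hypothesis that $\{\mu_N\}$ is associated to $\gamma$ says exactly that $\langle\pi^N_0,H\rangle\to\int_{\bb T}H(u)\gamma(u)\,du$ in $\bb P^\beta_{\mu_N}$-probability, so passing to the weak limit gives $\langle\pi_0,H\rangle=\langle\gamma,H\rangle$ for all continuous $H$, whence $\pi_0(du)=\gamma(u)\,du$ and thus $\rho(0,\cdot)=\gamma(\cdot)$.

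I do not anticipate a real obstacle here, since the hard analysis (the martingale estimate \eqref{limprob}, the control of the $2k$ sites in $\Gamma_N$ via smoothness of $H$ and the factor $N^{1-\beta}\to 0$, and the application of the Replacement Lemma \ref{replace1} to handle the leftover $\partial_uH(b_i)\{\eta_s(Nb_i)-\eta_s(Nb_i+1)\}$ term) has been absorbed into the derivation of \eqref{Q*1}. The only point requiring modest care is the density argument in $C^2(\bb T)$, which is routine but must be performed so that the exceptional null set of trajectories does not depend on $H$.
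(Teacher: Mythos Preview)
Your proposal is correct and follows essentially the same approach as the paper: take a countable dense subset of $C^2(\bb T)$, intersect the full-measure events from \eqref{Q*1}, and extend by density. You are in fact slightly more careful than the paper, which omits the verification of the initial condition $\pi_0(du)=\gamma(u)\,du$ that you correctly supply via Definition~\ref{def associated measures}.
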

\begin{proof}
Let $\{H_i: i\geq{1}\}$ be a countable dense set of functions on $C^2(\bb T)$, with respect to the norm $\|H\|_{\infty}+\|\partial_u^2H\|_\infty$. Provided by \eqref{Q*1} and intercepting a countable number of sets of probability one,
is straightforward to extend \eqref{Q*1} for
all functions $H\in C^2(\bb T)$ simultaneously.
\end{proof}

\subsection{Characterization of Limit Points for $\beta=1$} \label{limit points critical beta}

\quad
\vspace{0.2cm}

The idea in this case is to show that $\rho(t,\cdot)$ is an integral solution of \eqref{edp2} for
a small domain of functions and then extend this set to $\mc H^1_W$.\\

Let $\mc C_W\subset \mc H^1_W$ be the set of functions $H$ in $L^2(\bb T)$ such that for $x \in{\bb T}$
\begin{equation*}
H(x) \;=\; a \;+\; \int_{(0,x]} \Big(b+ \int_0^y h(z) dz\Big)  W(dy),
\end{equation*}
for some function $ h$ in $C(\bb T)$ and $a,b\in{\mathbb{R}}$ satisfying
\begin{equation*}
\int_0^1  h(x) \, dx \;=\; 0\;, \quad
\int_{(0,1]}  \Big( b + \int_0^y h(z) \, dz \Big) W(dy)\;=\;0\; .
\end{equation*}
Note that a function in $ \mc C_W$ is continuous in $ \bb T\backslash\{ b_1,...,b_k\}$ and well defined everywhere.
Now, fix a function $H\in \mc C_W$ and define the martingale
$M^{N}_t(H)$ as in \eqref{M}.
We aim that, for every $\delta>0$, the result in \eqref{limprob} holds for $H\in \mc C_W$. In fact, this
was already shown, for $H\in C^{2}(\bb T)$, in the proof of Proposition \ref{s06}.
By \eqref{varquad}, for $t\in[0,T]$
\begin{equation*}
\<M^{N}(H)\>_t\leq T\sum_{x\in\bb T_{N}} \xi^{N}_{x,x+1}
\Big[H(\pfrac{x+1}{N})-H(\pfrac{x}{N})\Big]^2.
\end{equation*}
Since $H\in\mc C_W$, $H$ is differentiable with
bounded derivative, except at the points $b_1,\ldots,b_k$. Therefore, for any pair $x,x+1$ such that there is no $b_i$ between $\pfrac{x}{N}$ and $\pfrac{x+1}{N}$, the following inequality holds
\begin{equation*}
 \xi^N_{x,x+1}\Big[\,H(\pfrac{x+1}{N})-H(\pfrac{x}{N})\,\Big]^2\leq \frac{1}{N^2}\|\partial_u^2H\|_\infty^2.
\end{equation*}
 On the other hand, if there is some $\{b_i: i=1,..,k\}$ in the interval $[\frac{x}{N},\pfrac{x+1}{N})$, then $\xi^N_{x,x+1}=N^{-\beta}$
and in this case we get to:
\begin{equation*}
 \xi^N_{x,x+1}\Big[\,H(\pfrac{x+1}{N})-H(\pfrac{x}{N})\,\Big]^2\leq \frac{4}{N^{2\beta}}\|H\|_{\infty}^2\,.
\end{equation*}
Since there are only finite $k$ slow bonds, we conclude that the quadratic variation of $M^N_t(H)$ vanishes as $N\rightarrow{\infty}$.
Now, Doob's inequality is enough to conclude.
As above, by a simple change of variables, we may rewrite the martingale $M^{N}_t(H)$ in terms of the
empirical measure as in \eqref{martingale2}. Now we want to analyze the integral term in the martingale decomposition \eqref{martingale2}.

\begin{lemma}\label{disc}
For any $H\in \mc C_W$,
\begin{equation*}
\lim_{N\to\infty}\pfrac{1}{N}\sum_{x\in \bb T_{N}}\Big\vert\, N^{2}\bb{L}_{N}H(\pfrac{x}{N})
-\pfrac{d}{dx}\pfrac{d}{dW}H(\pfrac{x}{N}) \,\Big\vert\; =\;0\,.
\end{equation*}
\end{lemma}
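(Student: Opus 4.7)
The plan is to split the sum over $x \in \mathbb{T}_N$ into two parts: the ``regular'' sites (those whose adjacent bonds both have conductance $1$) and the ``exceptional'' sites, i.e., the at most $2k$ sites $\{Nb_i, Nb_i+1 : 1 \le i \le k\}$ adjacent to some slow bond. The goal is to show that on regular sites the difference $N^2 \mathbb{L}_N H(x/N) - \frac{d}{dx}\frac{d}{dW}H(x/N)$ tends to zero uniformly in $x$, while on exceptional sites $N^2 \mathbb{L}_N H(x/N)$ is bounded uniformly in $N$, so that the $2k$ exceptional contributions to the averaged sum are $O(1/N)$.

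First, introduce the notation $g(y) = b + \int_0^y h(z)\,dz$, so that from the definition of $\mathcal{C}_W$ one has $\frac{d}{dW}H = g$ in the $W$-sense and $\frac{d}{dx}\frac{d}{dW}H = h$. Note that $h \in C(\bb T)$ and $g$ is $C^1$ on each component of $\bb T \setminus \{b_1,\ldots,b_k\}$ with $g' = h$ there. For any two neighboring vertices $x/N, (x+1)/N$ with no $b_i$ in $(x/N, (x+1)/N]$, since $W$ restricted to that interval is Lebesgue measure,
\begin{equation*}
H((x+1)/N) - H(x/N) \;=\; \int_{x/N}^{(x+1)/N} g(y)\,dy\,.
\end{equation*}

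For a regular site $x$, the function $H$ is $C^2$ in a neighborhood of $x/N$ with $H'' = h$. Applying Taylor's theorem to $H$ at $x/N$,
\begin{equation*}
N^2\mathbb{L}_N H(x/N) \;=\; N^2\bigl[H((x{+}1)/N) - 2H(x/N) + H((x{-}1)/N)\bigr] \;=\; h(x/N) + o(1),
\end{equation*}
where $o(1)$ is uniform in $x$ by uniform continuity of $h$ on $\bb T$. Thus the $N - 2k$ regular terms contribute at most $N^{-1}(N - 2k) \cdot o(1) = o(1)$ to the sum.

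For an exceptional site, say $x = Nb_i$ where $\xi^N_{x,x+1} = N^{-1}$ and $\xi^N_{x-1,x} = 1$, one expands
\begin{equation*}
H((x{+}1)/N) - H(x/N) \;=\; g(b_i) + \int_{x/N}^{(x+1)/N} g(y)\,dy \;=\; g(b_i) + \tfrac{1}{N}g(x/N) + O(N^{-2}),
\end{equation*}
while $H((x{-}1)/N) - H(x/N) = -\tfrac{1}{N}g(x/N) + O(N^{-2})$ since no Dirac mass falls in $((x{-}1)/N, x/N]$. Multiplying by $N$ and $N^2$ respectively and summing,
\begin{equation*}
N^2\mathbb{L}_N H(x/N) \;=\; N\bigl[g(b_i) - g(x/N)\bigr] + O(1)\;=\; O(1),
\end{equation*}
by continuity of $g$ on $[b_{i-1}, b_i]$ (so $g(b_i) - g(x/N) = O(1/N)$). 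An analogous computation works for $x = Nb_i + 1$. Hence each exceptional contribution to the average is $O(1/N)$, and summing over $2k$ of them yields $O(1/N)$.

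The key delicate point, and the only real calculation, is the cancellation at exceptional sites between the apparently diverging term $N\cdot g(b_i)$ produced by the Dirac mass crossed by the slow bond and the apparently diverging term $-N\cdot g(x/N)$ coming from the regular bond on the other side; continuity of $g$ across the point (together with $|b_i - x/N| \le 1/N$) is what makes the difference $O(1)$. Once this is observed, the rest of the argument is a routine Taylor expansion plus the trivial bound on the number of exceptional sites.
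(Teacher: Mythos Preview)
Your proof is correct and follows essentially the same route as the paper's: both split the sum over $\Gamma_N$ versus its complement, handle the regular sites by Taylor expansion of $H\in C^2(\bb T\setminus\{b_1,\dots,b_k\})$, and then show that the at most $2k$ exceptional terms are individually $O(1)$ (hence contribute $O(1/N)$ after averaging). The paper phrases the exceptional cancellation qualitatively, noting that the jump of $H$ at $b_i$ equals its one-sided derivative there, so that $[H(\pfrac{x+1}{N})-H(\pfrac{x}{N})]+N[H(\pfrac{x-1}{N})-H(\pfrac{x}{N})]\to 0$; you make the same cancellation explicit via the expansion in terms of $g$, which is arguably cleaner since the Lipschitz bound $|g(b_i)-g(x/N)|=O(1/N)$ (from $g\in C^1(\bb T)$) immediately delivers the $O(1)$ estimate.
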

\begin{proof} Recall the definition of the set $\Gamma_N$ given in Section \ref{s4} and rewrite the previous sum as
\begin{equation}\label{eq1}
\pfrac{1}{N}\sum_{x\notin \Gamma_N}\Big\vert\,N^{2}\bb{L}_{N}H(\pfrac{x}{N})-\pfrac{d}{dx}\pfrac{d}{dW}H(\pfrac{x}{N})\Big\vert\,
+\pfrac{1}{N}\sum_{x\in \Gamma_N}\Big\vert\,N^{2}\bb{L}_{N}H(\pfrac{x}{N})-
\pfrac{d}{dx}\pfrac{d}{dW}H(\pfrac{x}{N})\Big\vert\,.
\end{equation}
Outside $b_1,\ldots,b_k$, the operator $\pfrac{d}{dx}\pfrac{d}{dW}$ coincides with the Laplacian,
and outside $\Gamma_N$, the discrete operator $N^2\,\bb L_N$ coincides with the discrete Laplacian. Hence, the first term above is equal to
\begin{equation*}
\pfrac{1}{N}\sum_{x\notin \Gamma_N}
\Big\vert \,N^{2}\Big(H(\pfrac{x+1}{N})+H(\pfrac{x-1}{N})-2H(\pfrac{x}{N})\Big)
-\partial_u^2 H(\pfrac{x}{N})\,\Big\vert\,.
\end{equation*}
It is easy to verify that $H\in C^2(\bb T\backslash \{b_1,\ldots,b_k\})$ and has bounded derivatives. Thus,
by a Taylor expansion on $H$, it follows that the previous sum converges to zero as $N\rightarrow{+\infty}$. On the other hand,
the second sum in \eqref{eq1} is bounded by the sum of
\begin{equation*}
\pfrac{1}{N}\sum_{x\in \Gamma_N}\Big\vert\, \pfrac{d}{dx}\pfrac{d}{dW}H(\pfrac{x}{N})\,\Big\vert
\end{equation*}
and
\begin{equation*}
\sum_{x\in \Gamma_N}
\Big\vert\, N\xi^N_{x,x+1}\Big[H(\pfrac{x+1}{N})-H(\pfrac{x}{N})\Big] + N\xi^N_{x-1,x}\Big[H(\pfrac{x-1}{N})-H(\pfrac{x}{N})\Big]\, \Big\vert \, .
\end{equation*}
Since $H\in C_W$, $\pfrac{d}{dx}\pfrac{d}{W}H$ is a continuous function, therefore bounded. Since $\Gamma_N$ has $k$ elements, the first sum above converges
to zero as $N\rightarrow{+\infty}$. It remains to analyze the second sum above, where now the definition of the domain $C_W$ is crucial.
For each $x\in\Gamma_N$, one of the conductances above is equal to $N^{-1}$. Let us suppose that $\xi^N_{x,x+1}=N^{-1}$ and
$\xi^N_{x-1,x}=1$, the other case being completely analogous. In this case,
there exists some $b_i\in(\pfrac{x}{N},\pfrac{x+1}{N}]$. From the definition of $C_W$ and the measure $W$,
the function $H$ has a discontinuity
at $b_i$ of size
\begin{equation*}
\int_0^{b_i}h(dz)\,dz\,.
\end{equation*}
Besides that, the function $H$ has also sided-derivatives at $b_i$ of the same value. With this in mind, is easy to see that
\begin{equation*}
[H(\pfrac{x+1}{N})-H(\pfrac{x}{N})] + N[H(\pfrac{x-1}{N})-H(\pfrac{x}{N})]
\end{equation*}
converges to zero as $N\to\infty$. Recalling there are finite $2k$ elements in $\Gamma_N$, we finish the proof of the lemma.
\end{proof}

Now, fix $H\in C_W$ and take a continuous function $H^\eps$ which coincides with $H$ in
$\bb T\backslash \cup_{i=1}^k(b_i-\eps, b_i+\eps )$ and that
$\|H^\eps\|_{\infty} \leq \|H\|_\infty$.
 The choice of $\eps$ will be determined later.  Notice that
\begin{equation*}
\sup_{0\le t\le T}|\<\pi_t,H^\eps-H\>|\leq \sup_{0\le t\le T}\sum_{i=1}^{k}
\int_{(b_i-\eps,b_i+\eps)}\!\!\!\!\!\!\!\!\!\!\!\rho(t,u)\,
|H^\eps(u)-H(u)|\,du\leq 4\,k\,\eps\,\|H\|_\infty\,.
\end{equation*}
For every $\delta >0$,
\begin{equation}\label{Q*}
\bb Q_{*}^\beta \Big[\,\pi_\cdot: \, \sup_{0\le t\le T}
\Big\vert \<\pi_t, H \> \,-\,
\<\pi_0,H \> \,-\,
\int_0^t  \,  \<\pi_s ,\pfrac{d}{dx}\pfrac{d}{dW} H \> \,ds \Big\vert
 \, > \, \delta\, \Big]
 \end{equation}
\begin{equation*}
\begin{split}
\leq&\;\bb Q_{*}^\beta \Big[\,\pi_\cdot: \, \sup_{0\le t\le T}
\Big\vert\, \<\pi_t, H^\eps \> \;-\;
\<\pi_0,H^\eps \> \;-\;
\int_0^t  \,  \<\pi_s ,\pfrac{d}{dx}\pfrac{d}{dW} H \> \,ds \Big\vert
 \, > \, \delta/3 \,\Big]\\
 +&2\,\bb Q_{*}^\beta \Big[ \,\pi_\cdot:\, \sup_{0\le t\le T}
\Big\vert \<\pi_t,H^\eps-H\> \Big\vert
 \, > \, \delta/3\, \Big]\,.
\end{split}
\end{equation*}
 By a suitable choice of $\eps$, the second probability in the sum above is null.
 Since  $H^\eps$ and $\pfrac{d}{dx}\pfrac{d}{dW}H$ are continuous,
by the Portmanteau's Theorem and Proposition \ref{A3}, it holds that
\begin{equation*}
\begin{split}
&\,\bb Q_{*}^\beta \Big[\,\pi: \, \sup_{0\le t\le T}
\Big\vert \<\pi_t, H^\eps \> \;-\;
\<\pi_0,H^\eps \> \;-\;
\int_0^t  \,  \<\pi_s ,\pfrac{d}{dx}\pfrac{d}{dW} H \> \,ds \Big\vert
 \, > \, \delta/3 \,\Big]\\
\leq\,&\varliminf_{N\to\infty} \bb Q^{\beta,N}_{\mu_N} \Big[\,\pi: \, \sup_{0\le t\le T}
\Big\vert \<\pi_t, H^\eps \> \;-\;
\<\pi_0,H^\eps \> \;-\;
\int_0^t  \,  \<\pi_s ,\pfrac{d}{dx}\pfrac{d}{dW} H \> \,ds \Big\vert
 \, > \, \delta/3\,\Big]\\
 =\,&\varliminf_{N\to\infty} \bb P^{\beta}_{\mu_N} \Big[ \, \sup_{0\le t\le T}
\Big\vert \<\pi_t^N, H^\eps \> \;-\;
\<\pi_0^N,H^\eps \> \;-\;
\int_0^t  \,  \<\pi_s^N ,\pfrac{d}{dx}\pfrac{d}{dW} H \> \,ds \Big\vert
 \, > \, \delta/3\,\Big]\,.
\end{split}
\end{equation*}
Notice that the last equality is just the definition of the measure $\bb{Q}^{\beta,N}_{\mu_N}$.
Since there is only one particle per site, it holds that $\sup_{0\le t\le T} \big\vert \<\pi_t^N,H^\eps-H\> \big\vert\leq 4\,k\,\eps \|H\|_\infty\,,$
since $H^\eps$ coincides with $H$ in $\bb T\backslash \cup_{i=1}^k(b_i-\eps, b_i+\eps )$.
Adding and subtracting $\<\pi^N_s,N^2\,\bb L_N H\>$,  $\<\pi_t^N,H\>$ and $\<\pi_0^N,H\>$, we obtain that
\begin{equation*}
\begin{split}
 &\varliminf_{N\to\infty} \bb P^{\beta}_{\mu_N} \Big[ \, \sup_{0\le t\le T}
\vert \<\pi_t^N, H^\eps \> \;-\;
\<\pi_0^N,H^\eps \> \;-\;
\int_0^t  \,  \<\pi_s^N ,\pfrac{d}{dx}\pfrac{d}{dW} H \> \,ds \vert
 \, > \, \delta/3\,\Big]\\
 \leq&\varlimsup_{N\to\infty} \bb P^{\beta}_{\mu_N} \Big[ \, \sup_{0\le t\le T}
\Big\vert \<\pi_t^N, H \> \,-\,
\<\pi_0^N,H \> \,-\,
\int_0^t  \,  \<\pi_s^N ,N^2\,\bb L_N H \> \,ds \Big\vert
 \, > \, \delta/12\,\Big]\\
+\,&\varlimsup_{N\to\infty} \bb P^{\beta}_{\mu_N} \Big[ \,
\pfrac{1}{N}\sum_{x\in \bb T_{N}}\Big\vert N^2\bb{L}_{N}H(\pfrac{x}{N})
-\pfrac{d}{dx}\pfrac{d}{dW}H(\pfrac{x}{N}) \Big\vert
 \, > \, \delta/12 \,\Big]\\
+\,& 2\varlimsup_{N\to\infty} \bb P^{\beta}_{\mu_N} \Big[ \, \sup_{0\le t\le T}
\Big\vert \<\pi_t^N,H^\eps-H\> \Big\vert
 \, > \, \delta/12\, \Big]\,.
\end{split}
\end{equation*}
With another suitable choice of $\eps$, the third probability in the sum above is null.
Lemma \ref{disc} implies that the second probability above is zero for $N$ sufficiently large.
Recall we proved that \eqref{limprob} holds for $H\in {\mc C_W}$, so that the first term in the sum above is zero. Finally, from the previous computations we conclude that
\eqref{Q*} is zero for any $\delta>0$. Therefore,
$\bb Q_{*}^\beta$ is concentrated on absolutely continuous paths
$\pi_t(du) = \rho(t,u)\, du$ with positive density bounded by $1$ and for any fixed $H\in\mc C_W$,
$\bb Q_{*}^\beta$ a.s.
\begin{equation}\label{equation}
\<\rho_t,  H \> - \<\rho_0, H \> \;=\; \int_0^t  \, \Big\< \rho_s \,,\, \pfrac{d}{dx}\pfrac{d}{dW}  H \Big\> \,ds\,,\qquad
\textrm{for all}\;t\in[0,T]\,.
\end{equation}
\begin{proposition} For $\beta=1$, any limit point of $\bb Q^{\beta,N}_{\mu_N}$ is concentrated in absolutely continuous paths
$\pi_t(du) = \rho(t,u)\, du$, with positive density $\rho(t,\cdot)$ bounded by $1$, such that $\rho(t,\cdot)$ is
a weak solution of \eqref{edp2}.
\end{proposition}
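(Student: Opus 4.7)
The body of this subsection has already done the heavy lifting: for each \emph{fixed} $H\in \mc C_W$, equation \eqref{equation} shows that $\bb Q_*^\beta$ is concentrated on absolutely continuous paths $\pi_t(du)=\rho(t,u)\,du$ whose density satisfies the integral identity. The same argument that immediately showed $0\le \rho\le 1$ for the $\beta\in[0,1)$ case in Proposition \ref{p61} applies verbatim here, since it only uses that there is at most one particle per site. What remains is therefore twofold: first, to upgrade the ``for each $H$, a.s.'' statement to an ``a.s., for every $H$'' statement, and second, to enlarge the class of test functions from $\mc C_W$ to the full Sobolev-type space $\mc H^1_W$ appearing in Definition \ref{def weak solution edp2}.

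For the first step, I would mimic exactly the argument in Proposition \ref{p61}. Consider a countable subset $\{H_i:i\geq 1\}\subset \mc C_W$ that is dense with respect to the norm $\|H\|_\infty+\|\tfrac{d}{dx}\tfrac{d}{dW}H\|_\infty$ (such a subset exists by standard separability arguments, since one can parametrize $\mc C_W$ by a triple $(a,b,h)\in\bb R\times\bb R\times C(\bb T)$ satisfying two linear constraints). By \eqref{equation}, for each $i$ there is a set $\Omega_i$ with $\bb Q_*^\beta(\Omega_i)=1$ on which \eqref{equation} holds simultaneously for all $t\in[0,T]$. Intersecting yields a set of full probability on which \eqref{equation} holds simultaneously for every $H_i$ and every $t$, and the chosen norm together with the boundedness of $\rho$ allows to extend the identity to every $H\in\mc C_W$.

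For the second step, given an arbitrary $H\in\mc H^1_W$ with representation $(a,b,f)$, $f\in L^2(\bb T)$, I would construct an approximating sequence $H^n\in\mc C_W$ as follows. Pick continuous functions $g^n\to f$ in $L^2(\bb T)$ and set $h^n:=g^n-\int_0^1 g^n(z)\,dz$, so that $h^n\in C(\bb T)$, $\int_0^1 h^n=0$, and $h^n\to f$ in $L^2(\bb T)$. Define $b^n$ so that the second constraint in \eqref{domain} holds, and set $a^n=a$; this produces $H^n\in\mc C_W$. By construction $\tfrac{d}{dx}\tfrac{d}{dW}H^n=h^n\to f=\tfrac{d}{dx}\tfrac{d}{dW}H$ in $L^2(\bb T)$, and one checks that $H^n\to H$ in $L^2(\bb T)$ as well, since the defining integral representation is jointly continuous in $(a,b,h)$ with respect to the relevant norms. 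Now passing to the limit $n\to\infty$ in the integral identity for $H^n$ (using $0\le \rho\le 1$ so that $L^2$-convergence of both $H^n$ and $\tfrac{d}{dx}\tfrac{d}{dW}H^n$ against $\rho_t$ and $\rho_s$ suffices), we obtain the identity for $H$, completing the proof.

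The main obstacle is this second step: one must verify that the approximating sequence $\{H^n\}$ can be chosen inside $\mc C_W$ while preserving the two normalization constraints \eqref{domain} and while producing the required $L^2$-convergences of both $H^n$ and of $\tfrac{d}{dx}\tfrac{d}{dW}H^n$. The normalization $\int h^n=0$ is easy, but the second constraint involves $W$ (which has atoms at $b_1,\ldots,b_k$), so care is needed when choosing $b^n$ and in controlling the difference $H^n-H$ near those atoms. Once this approximation lemma is in place, the extension from $\mc C_W$ to $\mc H^1_W$ is routine.
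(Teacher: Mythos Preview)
Your proposal is correct and follows essentially the same two-step approach as the paper: first intersect over a countable dense subset of $\mc C_W$ to pass from ``for each $H$, a.s.'' to ``a.s., for all $H\in\mc C_W$'', then approximate an arbitrary $H\in\mc H^1_W$ by functions $H_n\in\mc C_W$ built from continuous $h_n\to h$ in $L^2$ with suitably adjusted constants. The paper is actually less explicit than you are about enforcing the two constraints \eqref{domain} on the approximants and simply asserts uniform convergence $H_n\to H$ via dominated convergence; your care in adjusting $h^n$ and $b^n$ to keep $H^n\in\mc C_W$ is warranted, and the argument goes through.
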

\begin{proof}
By a density argument, \eqref{equation} also holds, $Q_*^\beta$ a.s., for all $H\in C_W$ simultaneously.
It remains to extend \eqref{equation} for $H\in \mc H^1_W$. For that purpose fix $H\in \mc H^1_W$. Thus, for $x\in{\bb T}$
\begin{equation*}
H(x)=\alpha + \int_{(0,x]}  \left(\beta+\int_0^y h(z) \, dz\right)W(dy) \,,
\end{equation*}
with $\alpha,\beta\in\bb R$, $h\in L^2(\bb T)$ satisfying \eqref{domain}. Let $h_n\in C(\bb T)$
 converging to $h\in L^2(\bb T)$. Define
\begin{equation*}
H_n(x)=\alpha_n + \int_{(0,x]}  \left(\beta_n+\int_0^y h_n(z) \, dz\right)W(dy)\,,\end{equation*}
where $\alpha_n\to \alpha$ and $\beta_n\to \beta$. By the Dominated Convergence Theorem, it follows
 that $H_n$ converges uniformly to $H$. Therefore
\eqref{equation} is true for all $H\in \mc H^1_W$.
\end{proof}

\subsection{Characterization of Limit Points for $\beta\in (1,\infty)$}

\quad
\vspace{0.2cm}

In this regime of the parameter $\beta$,
Proposition \ref{s05} says that $Q_*^\beta$ is concentrated on trajectories absolutely continuous with respect to the Lebesgue measure $\pi_t(du)=\rho(t,u)\,du$ such that,
for each interval $(b_i,b_{i+1})$,
$\rho(t,\cdot)$ belongs to $L^2(0,T;\mc H^1(b_i,b_{i+1}))$.
It is well known that the Sobolev space $\mc H^1(a,b)$ has the following properties: all its elements are
 absolutely continuous functions  with bounded variation, c.f. \cite{e} and \cite{l}, therefore with lateral limits well-defined. Such property is inherited by $L^2(0,T;\mc H^1(b_i,b_{i+1}))$ in the sense that we can
integrate in time the lateral limits.
Therefore, $Q_*^\beta a.s.$, for each $i=1,\ldots,k$ and for any $t\in [0,T]$:
\begin{equation*}
\int_0^t\rho(s,b_i^+)\,ds<\infty \quad \textrm{and}\quad \int_0^t\rho(s,b_{i+1}^-)\,ds\,<\infty.
\end{equation*}

To simplify notation, in this subsection we denote $a=b_i$ and $b=b_{i+1}$.
Fix $h\in C^2(\bb T)$ and define $H:[0,T]\times \bb T\to\bb R$ by
$H(t,u)\;=\;h(t,u)\,\textbf{1}_{[a,b]}(u)$.

Recall that $\pi_t(du)=\rho(t,u)du$.
We begin by claiming  that
\begin{equation}\label{Q*3}
\begin{split}
&\bb Q_{*}^\beta \Big[\,\pi_\cdot:\,\<\rho_t, H_t \> \;-\;\<\rho_0,H_0 \>\;-\; \int_0^t \,\<\rho_s ,\partial_u^2 H_s+\partial_s H_s \>\,ds\\
&-\int_0^t\partial_uH(s,a^+)\,\rho(s,a^+)\,ds
+\int_0^t\partial_u H(s,b^-)\,\rho(s,b^-)\,ds=0,\forall t\in[0,T]\,\Big]\;=\;1\,.
\end{split}
\end{equation}
In order to prove \eqref{Q*3}, it is enough to show that, for every $\delta >0$
\begin{equation*}
\begin{split}
\bb Q_{*}^\beta &\Big[\pi:\,\sup_{0\leq t\leq T}\Big\vert  \<\rho_t, H_t \> \;-\;
 \<\rho_0,H_0 \> \;-\; \int_0^t  \,  \<\rho_s , \partial_u^2 H_s+\partial_s H_s \> \,ds \\
&- \int_0^t\partial_uH(s,a^+) \,\rho(s,a^+)\,ds
+ \int_0^t \partial_u H(s,b^-)\,\rho(s,b^-)\,ds\Big\vert\;>\;\delta\, \Big]\,=0\,.
\end{split}
\end{equation*}
Since the boundary integrals are not well-defined in the whole Skorohod space $D([0,T],\mc M)$, we cannot use
directly Portmanteau's Theorem. To avoid this technical obstacle, fix $\eps>0$, which will be taken small later.
Adding and subtracting the convolution of $\rho(t,u)$ with $\iota_\eps$, the probability above is less than or equal to the sum of
\begin{equation}\label{e35}
\begin{split}
&\bb Q_{*}^\beta \Big[\,\pi_\cdot:\,\sup_{0\leq t\leq T}\Big\vert\<\rho_t, H_t \>\;-\;
\<\rho_0,H_0 \>\;-\;\int_0^t \,\<\rho_s ,\partial_u^2 H_s+\partial_s H_s \>\,ds\\
&-\int_0^t\partial_uH(s,a^+)\,(\rho_s*\iota_\eps)(a)\,ds
+\int_0^t\partial_u H(s,b^-)\,(\rho_s*\iota_\eps)(b-\eps)\,ds\Big\vert>\delta/2\,\Big]
\end{split}
\end{equation}
and
\begin{equation*}
\begin{split}
&\bb Q_{*}^\beta \Big[\,\pi:\,\sup_{0\leq t\leq T}\Big\vert
\int_0^t\partial_uH(s,a^+)\,(\rho_s*\iota_\eps)(a)\,ds-\int_0^t\partial_uH(s,b^-)\,(\rho_s*\iota_\eps)(b-\eps)\,ds \\
&-\int_0^t\partial_uH(s,a^+) \,\rho(s,a^+)\,ds
+ \int_0^t \partial_uH(s,b^-)\,\rho(s,b^-)\,ds\Big\vert\;>\;\delta/2\,\Big]\,.
\end{split}
\end{equation*}
where $\iota_\eps$ and the convolution $\rho*\iota_\eps$ were defined in \eqref{iota}. The convolutions above are
suitable  averages of $\rho$ around the boundary points $a$ and $b$. Therefore, as $\eps\downarrow 0$, the set inside
the previous probability decreases to a set of null probability. It remains to deal with \eqref{e35}.

By Portmanteau's Theorem, Proposition \ref{A3} and since there is only one particle per site, \eqref{e35} is bounded from above by
\begin{equation*}
\begin{split}
&\varliminf_{N\to\infty}\; \bb Q^{\beta,N}_{\mu_N}  \Big[\,\pi_\cdot:\,\sup_{0\leq t\leq T}\Big\vert\, \<\pi_t, H \>\,-\,
 \<\pi_0,H_0 \>\,-\,\int_0^t \,\<\pi_s ,\partial_u^2 H_s+\partial_s H_s\>\,ds\\
&-\int_0^t \partial_u H(s,a^+)\,(\pi_s*\iota_\eps)(a)\,ds
+ \int_0^t\partial_u H(s,b^-)\,(\pi_s*\iota_\eps)(b-\eps)\,ds\Big\vert\;>\;\delta/2\,\Big]\,.
\end{split}
\end{equation*}
Now, by the definition of  $\bb Q^{\beta,N}_{\mu_N}$, we can rewrite the previous expression as
\begin{equation*}
\begin{split}
&\varliminf_{N\to\infty}\; \bb P^{\beta}_{\mu_N}  \Big[\, \sup_{0\leq t\leq T}\Big\vert\,  \<\pi_t^N, H_t \> \,-\,
 \<\pi_0^N,H_0 \> \,-\, \int_0^t  \,  \<\pi_s^N , \partial_u^2 H_s+\partial_s H_s \> \,ds \\
&- \int_0^t \partial_u H(s,a^+)\,\eta^{\eps N}_s(Na+1)\,ds
+ \int_0^t \partial_uH(s,b^-)\,\eta^{\eps N}_s(Nb)\,ds\Big\vert\;>\;\delta/2\, \Big]\,.
\end{split}
\end{equation*}
If we consider the discrete torus as embedded in the continuous torus, $Na+1$ is the closest  site to the right of $a$ and $Nb$ is the closest site to the left of $b$.
The next step is to add and subtract $\<\pi_s^N,N^2\,\bb L_N H\>$
and the previous probability becomes now bounded from above by the sum of
\begin{equation*}
\varlimsup_{N\to\infty}\;\bb P^{\beta}_{\mu_N}\Big[\,\sup_{0\leq t\leq T}\Big\vert\,\<\pi_t^N, H_t \>\,-\,
 \<\pi_0^N,H_0 \>-\int_0^t\,\<\pi_s^N,N^2\,\bb L_N H_s+\partial_s H_s\>\,ds\,\Big\vert\,>\,\delta/4\,\Big]
\end{equation*}
and
\begin{equation*}
\begin{split}
\varlimsup_{N\to\infty}\; &\bb P^{\beta}_{\mu_N}\Big[\,\sup_{0\leq t\leq T}
\Big\vert \int_0^t\,\<\pi_s^N , N^2\,\bb L_N H_s \>\,ds\,-\,\int_0^t\<\pi_s^N,\partial_u^2 H_s\>\,ds \\
&-\int_0^t \partial_uH(s,a^+)\,\eta^{\eps N}_s(Na+1)\,ds
+\int_0^t\partial_uH(s,b^-)\,\eta^{\eps N}_s(Nb)\,ds\Big\vert\;>\;\delta/4\,\Big]\,.
\end{split}
\end{equation*}
Repeating similar computations to the ones performed in Section \ref{s4} we can show \eqref{limprob} for a test function $H$ that depends also on time. Therefore the first probability above is null. Now we focus on showing that the second probability above is null. Recalling the definition of $H(s,\cdot)$ above, we have that $H(s,\cdot)$ is zero outside the interval $[a,b]$.
Besides that, for the set of vertices $\{Na+2,\ldots,Nb-1\}$, the discrete operator $N^2\,\bb L_N$
coincides with the discrete Laplacian, which applied to $H(s,\cdot)$ converges uniformly to the continuous Laplacian of $H(s,\cdot)$. Hence,
by the triangular
inequality, it is enough to show that, for any $\delta>0$:
\begin{equation*}
\begin{split}
\varlimsup_{N\to\infty}\; &\bb P^{\beta}_{\mu_N}  \Big[\,\sup_{0\leq t\leq T}
\Big\vert  \pfrac{1}{N}\int_0^t  \,   \{N^2\,\bb L_N H_s(\pfrac{Na}{N}) -\partial_u^2 H_s(\pfrac{Na}{N})\}\,\eta_s(Na)  \,ds \\
&+\pfrac{1}{N}\int_0^t  \,   \{N^2\,\bb L_N H_s(\pfrac{Na+1}{N}) -\partial_u^2 H_s(\pfrac{Na+1}{N})\}\,\eta_s(Na+1)  \,ds \\
&+\pfrac{1}{N}\int_0^t  \,   \{N^2\,\bb L_N H_s(\pfrac{Nb}{N}) -\partial_u^2 H_s(\pfrac{Nb}{N})\}\,\eta_s(Nb)  \,ds \\
&+\pfrac{1}{N}\int_0^t  \,   \{N^2\,\bb L_N H_s(\pfrac{Nb+1}{N}) -\partial_u^2 H_s(\pfrac{Nb+1}{N})\}\,\eta_s(Nb+1)  \,ds \\
&-\int_0^t \partial_uH(s,a^+)\,\eta^{\eps N}_s(Na+1)\,ds
+\int_0^t\partial_uH(s,b^-)\,\eta^{\eps N}_s(Nb)\,ds\Big\vert\;>\delta\,\Big]=0.
\end{split}
\end{equation*}
Since $h\in C^2(\bb T)$, the term involving the Laplacian above is bounded. Now, by the triangular inequality,
it is sufficient to show that, for any $\delta>0$:
\begin{equation*}
\begin{split}
\varlimsup_{N\to\infty}&\bb P^{\beta}_{\mu_N}\Big[\sup_{0\leq t\leq T}
\Big\vert\int_0^tN \bb L_N H_s(\pfrac{Na}{N})\,\eta_s(Na)ds
+\int_0^tN\bb L_N H_s(\pfrac{Na+1}{N})\,\eta_s(Na+1)ds\\
&+\int_0^t\,N\,\bb L_N H_s(\pfrac{Nb}{N})\,\eta_s(Nb)\,ds+
\int_0^t\,N\,\bb L_N H_s(\pfrac{Nb+1}{N}) \,\eta_s(Nb+1)\,ds\\
&-\int_0^t\partial_u H(s,a^+)\, \eta^{\eps N}_s(Na+1)\,ds
+\int_0^t\partial_u H(s,b^-)\,\eta^{\eps N}_s(Nb)\,ds\,\Big\vert\;>\;\delta\,\Big]\,=0.
\end{split}
\end{equation*}
For each one of the four vertices appearing inside the previous probability, the
operator $\bb L_N$ has two conductances, one equals to $N^{-\beta}$ and the other equals to $1$.
Since $\beta>1$, the terms involving $N^{-\beta}$ converge to zero. The terms involving the conductances equal to $1$, converge
to plus or minus the lateral space derivatives of $H$. Recall from definition of $H$ that  $\partial_u H(s,a^-)=\partial_u H(s,b^+)=0$ for all $0\leq{s}\leq{t}$.
From this, it remains to show that for any $\delta>0$
\begin{equation*}
\begin{split}
\varlimsup_{N\to\infty}\;& \bb P^{\beta}_{\mu_N}  \Big[\,\sup_{0\leq t\leq T}
\Big\vert\int_0^t\partial_uH(s,a^+)\,\eta_s(Na+1)\,ds-\int_0^t\,\partial_uH(s,b^-)\,\eta_s(Nb)\,ds\\
&-\int_0^t\partial_uH(s,a^+) \,\eta^{\eps N}_s(Na+1)\,ds
+ \int_0^t \partial_uH(s,b^-)\,\eta^{\eps N}_s(Nb)\,ds\,\Big\vert\;>\;\delta\, \Big]\,,
\end{split}
\end{equation*}
is null. Last expression is bounded from above by
\begin{equation*}
\begin{split}
&\varlimsup_{N\to\infty}\; \bb P^{\beta}_{\mu_N}\Big[\,\sup_{0\leq t\leq T}
\Big\vert\int_0^t\partial_uH(s,a^+)\,\Big\{\eta_s(Na+1)-\eta^{\eps N}_s(Na+1)\Big\}\,ds\Big\vert\;>\;\delta/2\,\Big]\\
+\,&\varlimsup_{N\to\infty}\;\bb P^{\beta}_{\mu_N}\Big[\, \sup_{0\leq t\leq T}
\Big\vert \int_0^t \,\partial_uH(s,b^-)\,\Big\{\eta_s(Nb)-\eta^{\eps N}_s(Nb)\Big\}\,ds\Big\vert\;>\;\delta/2\,\Big]\,.
\end{split}
\end{equation*}
The integral inside the probability above is a continuous function of the time $t$. Moreover, it has a bounded
Lipschitz constant. The same argument as the one used in \eqref{expression2} together with
Lemma \ref{replace2} imply that the previous expression converges to zero when $\eps\downarrow 0$,  which proves \eqref{Q*3}.
 \begin{proposition} For $\beta\in(1,\infty)$, any limit point of $\{\bb Q^{\beta,N}_{\mu_N}:N\geq{1}\}$ is concentrated in  absolutely continuous paths
$\pi_t(du) = \rho(t,u)\, du$, with positive density $\rho(t,\cdot)$ bounded by $1$, such that $\rho(t,\cdot)$ is
a weak solution of \eqref{edp3} in each cylinder $[0,T]\times[b_i,b_{i+1}]$.
\end{proposition}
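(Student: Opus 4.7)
The plan is to assemble three ingredients already built up in the previous sections: absolute continuity and boundedness of the density (from the exclusion constraint), the Sobolev regularity supplied by Proposition \ref{s05}, and the integral identity \eqref{Q*3} derived above. Together they yield the defining conditions of a weak solution in the sense of Definition \ref{def eq neumman cond}, and the final step is to widen the class of admissible test functions. Since $\eta(x)\in\{0,1\}$, standard arguments as in \cite{kl} imply that $\bb Q_*^\beta$ is concentrated on trajectories $\pi_t(du)=\rho(t,u)\,du$ with $0\le\rho\le 1$, and Proposition \ref{s05} provides the additional regularity $\rho\in L^2(0,T;\mc H^1(b_i,b_{i+1}))$ on each cylinder $[0,T]\times[b_i,b_{i+1}]$. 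This regularity guarantees, via the absolute continuity of $\mc H^1$-functions on a bounded interval, that the lateral traces $\rho(s,b_i^+)$ and $\rho(s,b_{i+1}^-)$ exist as integrable functions of $s$, so that the boundary integrals appearing in \eqref{int} are well-defined.

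The dynamical content of the weak formulation is exactly \eqref{Q*3}, which is the integral equation \eqref{int} for every test function of the specific form $H(t,u)=h(t,u)\mathbf 1_{[b_i,b_{i+1}]}(u)$ with $h\in C^{1,2}([0,T]\times\bb T)$. To conclude, I would extend this identity $\bb Q_*^\beta$-a.s.\ to every $H\in C^{1,2}([0,T]\times[b_i,b_{i+1}])$. For this, I would fix a countable dense family in $C^{1,2}([0,T]\times[b_i,b_{i+1}])$ (with respect to the natural $C^{1,2}$ norm), extend each representative to a $C^{1,2}$ function on $[0,T]\times\bb T$ by a Whitney/Tietze type construction, and apply \eqref{Q*3} to each such extension. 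Since only the values and first spatial derivatives of the test function on $[b_i,b_{i+1}]$ enter \eqref{int}, the identity depends solely on the restriction to the cylinder, and intersecting countably many full-probability sets yields the simultaneous validity. Density then propagates the identity to every admissible test function, using the continuity of the lateral traces inherited from the energy estimate.

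I expect the main obstacle to be precisely this reconciliation of test-function classes, and in particular verifying that the one-sided values of $\rho$ appearing in \eqref{Q*3}, which were produced as $\eps\downarrow 0$ limits of convolutions $\rho*\iota_\eps$ evaluated near $b_i^+$ and $b_{i+1}^-$, genuinely coincide almost everywhere in time with the Sobolev traces guaranteed by Proposition \ref{s05}. This is a standard but delicate compatibility check that relies on the absolute continuity of $\mc H^1$-functions on an interval to identify the two notions of lateral boundary value, and only after it is in place does the density argument of the previous paragraph legitimately deliver \eqref{int} for the full class of test functions prescribed in Definition \ref{def eq neumman cond}.
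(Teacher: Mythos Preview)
Your proposal is correct and follows essentially the same route as the paper: having established \eqref{Q*3} for test functions of the form $h(t,u)\mathbf 1_{[b_i,b_{i+1}]}(u)$ with $h\in C^{1,2}([0,T]\times\bb T)$, the paper's proof also reduces to intersecting countably many full-probability sets and invoking a density argument (exactly as in Proposition \ref{p61}), together with the Sobolev regularity from Proposition \ref{s05}. Your additional concern about identifying the $\eps\downarrow 0$ limits of $\rho*\iota_\eps$ with the Sobolev lateral traces is legitimate but is already implicit in the paper's derivation of \eqref{Q*3}, where the passage from \eqref{e35} to the statement with $\rho(s,a^+)$, $\rho(s,b^-)$ uses precisely the absolute continuity of $\mc H^1$-functions on the interval.
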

\begin{proof}
Given \eqref{Q*3}, it remains to extend the result for all functions $H$ and all cylinders $[0,T]\times[b_i,b_{i+1}]$
simultaneously. Intercepting a countable number of sets of probability one
 and applying a density argument as in Proposition \ref{p61}, the statement follows.
\end{proof}

\section{Uniqueness of Weak Solutions}\label{s7}
The uniqueness of weak solutions of \eqref{edp1} is standard and we refer to \cite{kl} for a proof.
It remains to prove uniqueness of weak solutions of the parabolic differential equations \eqref{edp2} and \eqref{edp3}. In both cases, by linearity it suffices to check the uniqueness for $\gamma(\cdot)\equiv 0$. Notice that existence of weak solutions of \eqref{edp1}, \eqref{edp2} and \eqref{edp3} is guaranteed by tightness
of the process as proved in Section \ref{s4}, together with the characterization of limit points as proved in Section \ref{s6}.

\subsection{Uniqueness of weak solutions of \eqref{edp2}}
\quad
\vspace{0.2cm}

Let  $\rho :\bb R_+ \times \bb T \to \bb R$ be a weak solution of  \eqref{edp2} with $\gamma\equiv 0$.
 By Definition \ref{def weak solution edp2}, for all $H\in{\mc H^1_W}$ and all $t>0$
\begin{equation}\label{eqint}
\< \rho_t, H\>
\;=\; \int_0^t \Big\< \rho_s , \frac{d}{dx}\frac{d}{dW}  H \Big\> \, ds\;.
\end{equation}
 From Theorem 1 of \cite{fl},
the operator  $- \frac{d}{dx}\frac{d}{dW}$  has a countable number of eigenvalues  $\{\lambda_n : n\ge 0\}$ and eigenvectors $\{F_n : n\ge 0\}$.
All eigenvalues have finite multiplicity,
  $0= \lambda_0 \le \lambda_1 \le \cdots$ and $\lim_{n\to\infty} \lambda_n  = \infty$.
Moreover, the eigenvectors $\{F_n: n\geq{0}\}$ form a complete orthonormal system in $L^2(\bb T)$. For $t>0$, define
\[R(t)=\sum_{n\in\bb{N}}\frac{1}{n^{2}(1+\lambda_n)}\<\rho_t,F_n\>^2.\]

Notice that $R(0)=0$
and since $\rho_t$ belongs to $L^2(\bb T)$, $R(t)$ is well defined for all $t\geq 0$.
By \eqref{eqint}, it follows that $\frac{d}{dt}\<\rho_t,F_n\>^2=
-2\lambda_n\<\rho_t,F_n\>^2$. Thus
\begin{equation*}
(\pfrac{d}{dt}R)(t)=-\sum_{n\in\bb N}\frac{2\lambda_n}{n^{2}(1+\lambda_n)}\< \rho_t,F_n\> ^{2}\,,
\end{equation*}
because
$\sum_{n\leq N}\frac{-2\lambda_n}{n^{2}(1+\lambda_n)}\<\rho_t,F_n\>^{2}$ converges uniformly to
$\sum_{n\in \bb N}\frac{-2\lambda_n}{n^{2}(1+\lambda_n)}\<\rho_t,F_n\>^{2}$,
as $N$ increases to infinity. Therefore $R(t)\geq0$ and $(\frac{d}{dt}R)(t)\leq 0$, for all $t>0$ and since $R(0)=0$, it follows that $R(t)=0$ for all $t>0$.
As a consequence of $\{F_n: n\geq{0}\}$ being a complete orthonormal system, it follows that $\<\rho_t,\rho_t\>=0$, which is enough to conclude.

\subsection{Uniqueness of weak solutions of \eqref{edp3}}
\quad
\vspace{0.2cm}

At first, we begin with an auxiliary lemma on integration by parts.
\begin{lemma}\label{l71}
 Let $\rho(t,\cdot)$ be a function in the Sobolev space $L^2(0,T;\mc H^1(a,b))$. Then, for any $H\in C^{0,1}([0,T]\times [a,b])$:
\begin{equation*}
\begin{split}
&\int_0^T\int_a^b \rho(s,u)\,\p_u H(s,u)\,du\,ds \\
=&-\int_0^T\int_a^b\p_u\rho(s,u)\,H(u,s)\,du\,ds+\int_0^T\Big\{\rho(s,b)\,H(s,b)-
\rho(s,a)\,H(s,a)\Big\}\,ds\,.
\end{split}
\end{equation*}
\end{lemma}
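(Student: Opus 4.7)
The plan is to reduce the claim to the classical one-dimensional integration-by-parts formula applied pointwise in time, and then to integrate in $s$ using Fubini's theorem. The hypothesis $\rho(t,\cdot)\in L^{2}(0,T;\mc H^{1}(a,b))$ means that for almost every $s\in [0,T]$, $\rho(s,\cdot)$ belongs to $\mc H^{1}(a,b)$. Since we are in one dimension, such a function admits an absolutely continuous representative on $[a,b]$, as recalled in the paper with reference to \cite{l}. In particular, $\rho(s,a)$ and $\rho(s,b)$ are well-defined as the one-sided limits (traces) of this representative.

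For each such $s$, both $\rho(s,\cdot)$ and $H(s,\cdot)$ are absolutely continuous on $[a,b]$, so we may apply the standard integration-by-parts formula in the variable $u$:
\begin{equation*}
\int_{a}^{b} \rho(s,u)\,\partial_{u}H(s,u)\,du \;=\; -\int_{a}^{b} \partial_{u}\rho(s,u)\,H(s,u)\,du \;+\; \rho(s,b)H(s,b)-\rho(s,a)H(s,a).
\end{equation*}
It then remains to integrate both sides over $s\in[0,T]$ and use Fubini's theorem to rearrange the double integrals on the left and the first term on the right. The key technical check is that all quantities involved are integrable in $s$: the two interior integrals are integrable by Cauchy--Schwarz and the hypotheses $\rho,\partial_{u}\rho\in L^{2}([0,T]\times[a,b])$ together with $H,\partial_{u}H$ bounded on the compact set $[0,T]\times[a,b]$; the boundary traces satisfy $|\rho(s,b)|+|\rho(s,a)|\leq C\|\rho(s,\cdot)\|_{\mc H^{1}(a,b)}$ by the one-dimensional trace inequality, so they lie in $L^{2}([0,T])$, and since $H$ is continuous on $[0,T]\times\{a,b\}$ the boundary products are integrable in $s$.

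The only genuine obstacle is to justify that one can effectively pass from the pointwise-in-$s$ identity to its time-integrated version; this requires verifying the measurability of $s\mapsto \rho(s,a)$ and $s\mapsto \rho(s,b)$ and the measurability in $s$ of all the inner integrals. Measurability of the inner integrals is immediate from Fubini applied to $\rho\partial_{u}H$ and $(\partial_{u}\rho)H$, both of which are $L^{1}([0,T]\times[a,b])$. Measurability of the traces follows from writing them as a.e.\ limits of bounded linear functionals of $\rho(s,\cdot)$, for example $\rho(s,b)=\lim_{\delta\downarrow 0}\delta^{-1}\int_{b-\delta}^{b}\rho(s,u)\,du$ together with the pointwise trace inequality, which delivers measurability as an a.e.\ limit of measurable functions of $s$. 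With these ingredients in place, the time-integration of the pointwise identity yields exactly the stated formula.
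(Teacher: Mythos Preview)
Your proof is correct and takes a genuinely different route from the paper's. The paper argues by approximation: it writes $H=H\,f^\eps+H(1-f^\eps)$ with a piecewise-linear cutoff $f^\eps$ vanishing at $a,b$ and equal to $1$ on $(a+\eps,b-\eps)$, applies the defining identity of the weak derivative to the compactly supported part, and then sends $\eps\downarrow 0$ so that the remaining term concentrates at the boundary and produces the trace integrals. Your argument instead exploits the one-dimensional Sobolev embedding: for a.e.\ $s$ the slice $\rho(s,\cdot)$ has an absolutely continuous representative, so the classical integration-by-parts formula holds pointwise in $s$, and you then integrate in $s$ after checking the requisite integrability and measurability. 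Your approach is more direct and makes transparent why the boundary terms are meaningful (they are honest pointwise values of the AC representative), while the paper's cutoff argument stays within the weak-derivative framework and avoids invoking the embedding explicitly; the latter would transfer more readily to settings where no continuous representative is available.
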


Notice the partial derivative in $\rho$ is the weak derivative, while the partial derivative in $H$ is the usual one.
Besides that, the function $H$ is smooth, but possibly not null at the boundary $[0,T]\times\{a,b\}$, and therefore is not
valid the integration by parts in the sense of $L^2(0,T;\mc H^1(a,b))$, which has no boundary integrals.

\begin{proof} Fix $\eps>0$ and write $H=H^\eps+(H-H^\eps)$, where $H^\eps$ coincides with $H$ in the region $[0,T]\times (a+\eps,b-\eps)$, has
compact support contained in $[0,T]\times (a,b)$ and belongs to $C^{0,1}([0,T]\times (a,b))$. By the assumptions
on $H^\eps$, we  have that
\begin{equation*}
\begin{split}
&\int_0^T\int_a^b\rho(s,u)\,\p_u H(s,u)\,du\,ds\\
=&-\int_0^T\int_a^b\p_u\rho(s,u)\,H^\eps(s,u)\,du\,ds  + \int_0^T\int_a^b\rho(s,u)\p_u(H-H^\eps)(s,u)\,du\,ds\,.\\
\end{split}
\end{equation*}
Last result is a consequence of $H^\eps$ having compact support strictly contained in the open set $(a,b)$.
Let  $f_\eps:[a,b]\to\bb R$  be the function such that $f(u)=1$ if $u\in(a+\eps,b-\eps)$, $f(a)=f(b)=0$,
and interpolated linearly otherwise. The decomposition $H=H\,f^\eps+ H(1-f^\eps)$ can be done, but now the function $H\,f^\eps$ does not have the properties as required above for $H^\eps$. Nevertheless, taking a suitable approximating sequence of functions $H^ \eps$, it follows that
\begin{equation*}
\begin{split}
&\int_0^T\int_a^b\rho(s,u)\,\p_u H(s,u)\,du\,ds\\
=&-\int_0^T\int_a^b\Big\{\p_u\rho(s,u)H(s,u)f^\eps(u)
 +\rho(s,u)\p_u\Big(H(s,u)(1-f^\eps(u))\Big)\Big\}\,du\,ds.
\end{split}
\end{equation*}
Taking the limit as $\eps\downarrow 0$ yields the statement of the lemma.
\end{proof}

Let $\rho(t,\cdot)$ be a weak solution of \eqref{edp3} with $\gamma\equiv{0}$. Provided by Lemma \ref{l71}, for any function
$H\in C^{1,2}([0,T]\times (b_i,b_{i+1}))$,
\begin{equation*}
\int_{b_i}^{b_{i+1}}\rho_t(u)H(t,u)\,du+\int_0^t
\int_{b_i}^{b_{i+1}}\Big\{\p_u\rho_s(u)\p_u H(s,u)-\rho_s(u)\p_s H(s,u)\Big\}du\,ds=0.
\end{equation*}
From this point, uniqueness is a particular case of a general result in \cite{la}, namely Theorem III.4.1.
In sake of completeness, we sketch an adaptation of it to our particular case. Denote by $W^{1}_{2,T}=W^{1}_{2,T}([0,T]\times (a,b))$ the space of functions with one weak derivative in space and time, both belonging to $L^2([0,T]\times (a,b))$ and vanishing at time $T$. By extending the previous equality to $H\in W^{1}_{2,T}$ it follows that
\begin{equation}\label{efinal}
\int_0^T\int_{b_i}^{b_{i+1}}\Big\{\p_u\rho_s(u) \, \p_u H(s,u)-\rho_s(u)\,\p_s H(s,u)\Big\} \,du\, ds\,=\,0\,.
\end{equation}
It is not difficult to show that the function
\begin{equation*}
H(s,u)\;=\; -\int_s^T\rho(r,u)\,dr
\end{equation*}
belongs to $W^{1}_{2,T}$. Replacing last function in \eqref{efinal}, then we can rewrite \eqref{efinal} as
\begin{equation*}
\int_0^T\int_{b_i}^{b_{i+1}}\Big\{\frac{1}{2}\p_s( \p_u H(s,u))^2-(\p_s H(s,u))^2\Big\}\,du\, ds\;=\;0\;.
\end{equation*}
By Fubini's Theorem we get to
\begin{equation*}
\frac{1}{2}\int_{b_i}^{b_{i+1}}\Big\{(\p_u H(T,u))^2\,-( \p_u H(0,u))^2\Big\}\,du
-\int_0^T\int_{b_i}^{b_{i+1}} (\p_s H(s,u))^2\,du\, ds\;=\;0\,.
\end{equation*}
By the definition of $H$, its weak space derivative vanishes at time $T$, so that the first integral above is null. Therefore, $\partial_sH$ is identically null, and by the definition of $H$ above, this implies that
$\rho$ vanishes, finishing the proof.

\section{Appendix}
\begin{proposition}\label{K0}
 Denote by $H_N (\mu_N | \nu_\alpha)$ the entropy of a probability
measure $\mu_N$ with respect to a stationary state $\nu_\alpha$. Then, there
exists a finite constant $K_0:=K_{0}(\alpha)$ such that
$H_N (\mu_N | \nu_\alpha) \;\le\; K_0 N\,,$
for all probability measures $\mu_N$.
\end{proposition}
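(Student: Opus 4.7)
The plan is to unfold the definition of relative entropy, use the boundedness of $\mu_N$ by $1$, and then exploit that the Bernoulli product measure $\nu_\alpha$ assigns at least $(\alpha \wedge (1-\alpha))^N$ mass to every configuration in the finite state space $\{0,1\}^{\mathbb{T}_N}$.

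First I would write
\begin{equation*}
H_N(\mu_N | \nu_\alpha) \;=\; \sum_{\eta \in \{0,1\}^{\mathbb{T}_N}} \mu_N(\eta) \log \frac{\mu_N(\eta)}{\nu_\alpha(\eta)}\,,
\end{equation*}
where the sum is taken over configurations with $\mu_N(\eta)>0$ (and terms with $\mu_N(\eta)=0$ are set to zero by convention). Splitting the logarithm and using that $\mu_N(\eta) \le 1$ implies $\log \mu_N(\eta) \le 0$, we obtain
\begin{equation*}
H_N(\mu_N | \nu_\alpha) \;\le\; -\sum_{\eta} \mu_N(\eta)\, \log \nu_\alpha(\eta)\,.
\end{equation*}

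Next I would use the explicit form of the Bernoulli product measure: writing $S(\eta) = \sum_{x \in \mathbb{T}_N}\eta(x)$, one has $\nu_\alpha(\eta) = \alpha^{S(\eta)}(1-\alpha)^{N-S(\eta)}$, so that
\begin{equation*}
-\log \nu_\alpha(\eta) \;=\; -S(\eta)\,\log\alpha \;-\; (N-S(\eta))\,\log(1-\alpha) \;\le\; N\,\max\{-\log\alpha,\,-\log(1-\alpha)\}\,.
\end{equation*}
Since $\alpha \in (0,1)$ (which is implicit because $\nu_\alpha$ must be a reference probability measure with full support), the constant $K_0 := K_0(\alpha) = \max\{-\log\alpha,\,-\log(1-\alpha)\} = -\log(\alpha \wedge (1-\alpha))$ is finite. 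Combining this with the previous inequality yields $H_N(\mu_N | \nu_\alpha) \le K_0 N$ uniformly in $\mu_N$, as required.

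There is essentially no obstacle here; the only subtlety is the degenerate cases $\alpha \in \{0,1\}$, which must be excluded (otherwise $\nu_\alpha$ is a Dirac mass and the entropy of any other measure is infinite). This is consistent with the usage of the proposition in the paper, where $\alpha$ is fixed in $(0,1)$ as the density of the reference invariant measure.
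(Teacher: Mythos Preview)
Your proof is correct and follows essentially the same approach as the paper: use $\mu_N(\eta)\le 1$ to drop the $\log\mu_N(\eta)$ term, then bound $\nu_\alpha(\eta)\ge(\alpha\wedge(1-\alpha))^N$ to obtain $K_0=-\log(\alpha\wedge(1-\alpha))$. The only difference is cosmetic---you write out the intermediate step via $S(\eta)$ explicitly, whereas the paper passes directly to the lower bound on $\nu_\alpha(\eta)$.
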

\begin{proof}
 Recall that $\nu_\alpha$ is Bernoulli product of parameter $\alpha$. By the
explicit formula given in Theorem A1.8.3 of \cite{kl},
\begin{eqnarray*}
 H_N (\mu_N | \nu_\alpha) & = & \sum_{\eta\in \{0,1\}^{\bb T_N}}\mu_N (\eta)\,
\log \frac{\mu_N (\eta)}{\nu_\alpha (\eta)}\\
 & \leq & \sum_{\eta\in \{0,1\}^{\bb T_N}}\mu_N (\eta)\,
\log \frac{1}{\nu_\alpha(d\eta)}\\
& \leq & \sum_{\eta\in \{0,1\}^{\bb T_N}}\mu_N(\eta)\,
\log \frac{1}{[\alpha\wedge (1-\alpha)]^N}\\
&=& N\, (-\log[\alpha\wedge (1-\alpha)])\,.
\end{eqnarray*}
\end{proof}
\begin{proposition}\label{densidade}
Assume that $L$ is a reversible generator with respect to an invariant measure $\nu$ in a countable space-state
$E$, and $V:\bb R_+\times E\to \bb R$ is a bounded function. Notice that $L+V_t$ will be a symmetric operator
in $L^2(\nu)$. Denote by $\Gamma_t$ the largest eigenvalue of $L+V_t$:
\begin{equation*}
\Gamma_t=\sup_{\<f,f\>_{\nu}=1}\Big\{\<V_t,f^2\>_\nu+\<Lf,f\>_\nu\Big\}\,.
\end{equation*}
Then, the supremum above can be taken over only positive functions $f$, or else,
\begin{equation*}
\Gamma_t=\sup_{f\textrm{ density}}\Big\{\<V_t,(\sqrt{f})^2\>_\nu+\<L\sqrt{f},\sqrt{f}\>_\nu\Big\}\,.
\end{equation*}
\end{proposition}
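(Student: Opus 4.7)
The inequality $\Gamma_t \ge \sup_{f\text{ density}} \{\<V_t,(\sqrt{f})^2\>_\nu+\<L\sqrt{f},\sqrt{f}\>_\nu\}$ is immediate, since for any density $f$ the function $g=\sqrt{f}$ is a nonnegative element of $L^2(\nu)$ with $\<g,g\>_\nu=\int f\,d\nu=1$, hence admissible in the variational formula defining $\Gamma_t$. So the content of the proposition is the opposite inequality: replacing an arbitrary unit vector $g$ in the variational formula by $|g|$ should not decrease the Rayleigh quotient, and $|g|=\sqrt{f}$ with $f=g^2$ a density.

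The plan is therefore the following. First, since $E$ is countable and $L$ is reversible with respect to $\nu$, one may write $L$ in terms of nonnegative jump rates $c(x,y)$ satisfying the detailed balance condition $\nu(x)c(x,y)=\nu(y)c(y,x)$, so that for $g\in L^2(\nu)$ one has the Dirichlet form representation
\begin{equation*}
-\<Lg,g\>_\nu \;=\; \frac{1}{2}\sum_{x,y\in E}\nu(x)\,c(x,y)\,(g(y)-g(x))^2\,.
\end{equation*}
The first term $\<V_t,g^2\>_\nu$ trivially equals $\<V_t,|g|^2\>_\nu$, so no work is needed there. For the second term, I would use the pointwise inequality $\big||g(y)|-|g(x)|\big|\le |g(y)-g(x)|$, which implies
\begin{equation*}
-\<L|g|,|g|\>_\nu \;=\; \frac{1}{2}\sum_{x,y}\nu(x)\,c(x,y)\,(|g(y)|-|g(x)|)^2 \;\le\; -\<Lg,g\>_\nu\,,
\end{equation*}
that is, $\<L|g|,|g|\>_\nu \ge \<Lg,g\>_\nu$. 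Combining the two observations yields
\begin{equation*}
\<V_t,g^2\>_\nu+\<Lg,g\>_\nu \;\le\; \<V_t,|g|^2\>_\nu+\<L|g|,|g|\>_\nu\,,
\end{equation*}
and since $\||g|\|_{L^2(\nu)}=\|g\|_{L^2(\nu)}=1$, the function $f:=g^2=|g|^2$ is a probability density with $\sqrt{f}=|g|$. Taking the supremum over $g$ on the left gives $\Gamma_t$, while the right-hand side is bounded by the supremum over densities. This establishes the reverse inequality.

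The only step that is not completely formal is the Dirichlet form representation, which relies on $L$ being a reversible symmetric Markov generator on a countable space and, strictly speaking, on some summability of $c(x,y)$ to justify the manipulation termwise; in the applications of this paper $E=\{0,1\}^{\bb T_N}$ is finite, so no such issue arises. The key idea, and the only place where the specific structure of $L$ is used, is the monotonicity of the Dirichlet form under the map $g\mapsto |g|$, which is the main (and in fact only) obstacle to turning the variational formula over unit vectors into one over densities.
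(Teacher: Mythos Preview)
Your proof is correct and follows essentially the same approach as the paper: both rely on the Dirichlet form representation $\<Lg,g\>_\nu=-\tfrac{1}{2}\sum_{x,y}\nu(x)L(x,y)(g(y)-g(x))^2$ together with the pointwise inequality $\big||g(y)|-|g(x)|\big|\le |g(y)-g(x)|$ to conclude that replacing $g$ by $|g|$ does not decrease the Rayleigh quotient. The paper's proof is simply a two-line sketch of the same argument you have written out in full.
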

\begin{proof}
It follows from the expression of the Dirichlet form (see \cite{kl}),
\begin{equation*}
\<Lf,f\>_\nu=-\pfrac{1}{2}\sum_{x,y\in E}\nu(x)\,L(x,y)[f(y)-f(x)]^2\,,
\end{equation*}
and the inequality $|\vert f(y)\vert-\vert f(x)\vert|\leq \vert f(y)-f(x)\vert$.
\end{proof}
\begin{proposition}\label{A3}
If $G_1$, $G_2$, $G_3$ are continuous functions defined in the torus $\bb T$, the application
from $D([0,T],\mc M)$ to $\bb R$ that associates to a trajectory $\{\pi_t: 0\leq t\leq T\}$ the number
\begin{eqnarray*}
 \sup_{0\le t\le T}
\Big\vert\, \<\pi_t, G_1\> \;-\; \<\pi_0, G_2 \> \;-\;
\int_0^t  \,  \<\pi_s ,G_3 \> \,ds \,\Big\vert
\end{eqnarray*}
is continuous for the Skorohod metric in $D([0,T],\mc M)$.
\end{proposition}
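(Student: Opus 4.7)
The plan is to use the standard characterization of Skorohod convergence by time reparametrizations. Suppose $\pi^{n}\to\pi$ in $D([0,T],\mc M)$; then there exist continuous strictly increasing bijections $\lambda_{n}:[0,T]\to[0,T]$ with $\|\lambda_{n}-\mathrm{id}\|_{\infty}\to 0$ and $\sup_{t\in[0,T]}d_{w}(\pi^{n}_{\lambda_{n}(t)},\pi_{t})\to 0$, where $d_{w}$ metrizes the weak topology on $\mc M$. Writing $\Phi(\pi)(t)=\<\pi_{t},G_{1}\>-\<\pi_{0},G_{2}\>-\int_{0}^{t}\<\pi_{s},G_{3}\>\,ds$, I plan to prove that $\sup_{t}|\Phi(\pi^{n})(\lambda_{n}(t))-\Phi(\pi)(t)|\to 0$. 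Once this is established, since $\lambda_{n}$ is a bijection of $[0,T]$, $\sup_{t}|\Phi(\pi^{n})(t)|=\sup_{t}|\Phi(\pi^{n})(\lambda_{n}(t))|$, and the desired continuity of the functional follows immediately from the reverse triangle inequality applied to the two suprema.

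The reparametrized difference splits naturally into three terms: the evaluation term $\<\pi^{n}_{\lambda_{n}(t)},G_{1}\>-\<\pi_{t},G_{1}\>$, the initial term $\<\pi^{n}_{0},G_{2}\>-\<\pi_{0},G_{2}\>$, and the integral discrepancy $\int_{0}^{\lambda_{n}(t)}\<\pi^{n}_{s},G_{3}\>\,ds-\int_{0}^{t}\<\pi_{s},G_{3}\>\,ds$. The first term vanishes uniformly in $t$ because $G_{1}\in C(\bb T)$ makes $\mu\mapsto\<\mu,G_{1}\>$ weakly continuous on $\mc M$, so uniform convergence in the metric $d_{w}$ transfers to uniform convergence of the pairings. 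The initial term is handled by $\lambda_{n}(0)=0$, which reduces it to a special case of the first.

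The integral term is the main technical point, and is treated in two pieces. Replacing $\int_{0}^{\lambda_{n}(t)}\<\pi_{s},G_{3}\>\,ds$ as an intermediate quantity, the difference $\int_{t}^{\lambda_{n}(t)}\<\pi_{s},G_{3}\>\,ds$ is bounded by $\|G_{3}\|_{\infty}\,|\lambda_{n}(t)-t|$ (recall $\pi_{s}(\bb T)\le 1$), which vanishes uniformly in $t$. For the remaining piece $\int_{0}^{\lambda_{n}(t)}(\<\pi^{n}_{s},G_{3}\>-\<\pi_{s},G_{3}\>)\,ds$, I bound it in absolute value by $\int_{0}^{T}|\<\pi^{n}_{s},G_{3}\>-\<\pi_{s},G_{3}\>|\,ds$ and apply dominated convergence (the integrand is uniformly bounded by $2\|G_{3}\|_{\infty}$). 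The required pointwise convergence $\<\pi^{n}_{s},G_{3}\>\to\<\pi_{s},G_{3}\>$ at every continuity point $s_{0}$ of $\pi$ follows from setting $t_{n}:=\lambda_{n}^{-1}(s_{0})\to s_{0}$ in the uniform convergence $\pi^{n}_{\lambda_{n}(t)}\to\pi_{t}$: one obtains $d_{w}(\pi^{n}_{s_{0}},\pi_{t_{n}})\to 0$, and continuity of $\pi$ at $s_{0}$ gives $\pi_{t_{n}}\to\pi_{s_{0}}$ weakly, hence $\pi^{n}_{s_{0}}\to\pi_{s_{0}}$ weakly. Since $\pi\in D([0,T],\mc M)$ has at most countably many discontinuities, this pointwise convergence holds Lebesgue-almost everywhere on $[0,T]$, which is what dominated convergence requires.

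Combining the three estimates yields $\sup_{t}|\Phi(\pi^{n})(\lambda_{n}(t))-\Phi(\pi)(t)|\to 0$, and the argument closes as outlined. The only step with any subtlety is the transfer from Skorohod convergence of $\pi^{n}$ to pointwise convergence of $\<\pi^{n}_{s},G_{3}\>$ on the (full-measure) continuity set of $\pi$; the rest is bookkeeping using the uniform bound $\pi_{s}(\bb T)\le 1$ and continuity of $G_{1},G_{2},G_{3}$.
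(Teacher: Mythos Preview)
Your proof is correct and follows essentially the same approach as the paper, which simply observes that $\pi\mapsto\<\pi,G\>$ is weakly continuous and then invokes the definition of the Skorohod metric as an infimum over time reparametrizations. You have filled in the details that the paper leaves to the reader; in particular your treatment of the integral term via dominated convergence on the continuity set of $\pi$ is the right way to make the argument precise.
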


\begin{proof}
If $G$ is a continuous function in the torus, the application $\pi\mapsto \<\pi,G\>$ is a
continuous application from $\mc M$ to $\bb R$ in the weak topology.
From this observation and the definition of the Skorohod metric as an infimum under reparametrizations  (c.f. \cite{kl}),
the statement follows.
\end{proof}

\section*{Acknowledgements}
T.F. and A.N. thank Juan Gonzalez for pointing out the reference \cite{la}. 

P.G. thanks ``Funda\c c\~ao para a Ci\^encia e Tecnologia" for the research project with reference PTDC/MAT/109844/2009: ``Non-Equilibrium Statistical Physics" and for the financial support provided by the Research Center of
Mathematics of the University of Minho through the FCT Pluriannual
Funding Program. P.G. thanks the hospitality of ``Instituto de Matem\' atica Pura e Aplicada" where this work was initiated.

The authors thank Claudio Landim for nice discussions on the subject.

\end{document}